\documentclass[a4paper, 11pt, reqno]{amsart} 
\usepackage{mathrsfs}
\usepackage[normalem]{ulem}
\usepackage{amsmath, amsthm, thmtools, amssymb, tikz} 
\usepackage{bbm}
\usepackage{bm}

\usepackage[utf8]{inputenc} 
\usepackage{graphicx} 
\usepackage[small]{caption}
\usetikzlibrary{arrows,calc}
\numberwithin{equation}{section}
\usepackage{stmaryrd}
\usepackage{mathtools}

\usepackage{lmodern} 
\usepackage[T1]{fontenc} 

\usepackage{enumerate} 
\usepackage{verbatim} 
\usepackage{color}
\usepackage{xcolor}
\usepackage{tabularx}
\usepackage{xspace}

\usepackage[colorlinks, linkcolor=blue, citecolor = green!55!black]{hyperref}

\usepackage[letterpaper,nomarginpar]{geometry}

\usepackage[capitalise]{cleveref}

\numberwithin{equation}{section}

\newtheorem{maintheorem}{Theorem}

\newtheorem{theorem}{Theorem}[section]

\newtheorem*{theorem*}{Theorem}
\newtheorem{lemma}[theorem]{Lemma}

\newtheorem{proposition}[theorem]{Proposition}

\newtheorem{corollary}[theorem]{Corollary}

\theoremstyle{definition}{

\newtheorem*{definition*}{Definition}

\newtheorem*{question*}{Question}
\newtheorem*{example*}{Example}
\newtheorem*{examples*}{Examples}
\newtheorem{remark}[theorem]{Remark}
\newtheorem*{remark*}{Remark}

\newtheorem*{claim*}{Claim}
}

\newcommand{\R}{\mathbb{R}}
\newcommand{\E}{\mathbb{E}}

\newcommand{\N}{\mathbb{N}}

\newcommand{\C}{\mathbb{C}}
\renewcommand{\S}{\mathbb{S}}

\newcommand{\cC}{\mathcal{C}}
\newcommand{\cI}{\mathcal{I}}

\newcommand{\cB}{\mathcal B}

\newcommand{\cE}{\mathcal{E}}

\newcommand{\cN}{\mathcal{N}}
\newcommand{\cM}{\mathcal{M}}

\renewcommand{\P}{\mathbb{P}}
\newcommand{\bP}{\mathbf P}
\newcommand{\bM}{\mathbf M}
\newcommand{\cR}{\mathcal{R}}

\newcommand{\ep}{\varepsilon}
\renewcommand{\d}{\mathrm{d}}
\newcommand{\Cov}{\operatorname{Cov}}
\newcommand{\Var}{\operatorname{Var}}
\newcommand{\li}{\operatorname{Li}}

\newcommand{\id}{\operatorname{Id}}

\newcommand{\ind}[1]{\mathbbm{1}_{\{ #1\}}}

\newcommand{\one}{\mathbbm{1}}
\newcommand{\given}{\;\big|\;}

\newcommand{\cut}{\mathrm{cut}}
\newcommand{\fro}{\mathrm{froze}}

\renewcommand{\index}{\mathrm{ind}}

\newcommand{\caseS}{\hyperref[case:S]{\textup{(S)}}\xspace}
\newcommand{\caseR}{\hyperref[case:R]{\textup{(R)}}\xspace}

\usepackage{microtype}
\usepackage[backend = biber, sorting = nyt, defernumbers=true, style = numeric, isbn = false, url = false]{biblatex}
\bibliography{LitDirichlet}

\title[Multiplicative chaos from random multiplicative functions]{Multiplicative chaos from random multiplicative functions}
\author[H. Hu]{Haoran Hu}
\address{H.\ Hu\hfill\break
Department of Mathematics\\ Northwestern University\\ 2033 Sheridan Road\\ Evanston, IL 60208, USA.}
\email{haoranhu2030@u.northwestern.edu}
\author[Y. H. Kim]{Yujin H. Kim}
\address{Y.\ H.\ Kim\hfill\break
The Division of Physics, Mathematics and Astronomy \\ 
California Institute of Technology\\
1201 E California Blvd \\ Pasadena, CA 91125, USA.}
\email{yujin@caltech.edu}
 \author[X. Kriechbaum]{Xaver Kriechbaum}
\address{X.\ Kriechbaum\hfill\break
Institut de Math\'{e}matiques de Toulouse\\
Universit\'{e} de Toulouse\\
118 route de Narbonne\\
31062 Toulouse Cedex 9, France}
\email{xaver.kriechbaum@math.univ-toulouse.fr}

\allowdisplaybreaks
\begin{document}

\begin{abstract}

We construct the multiplicative chaos measures emerging from the  Dirichlet series of the twisted Steinhaus multiplicative function. Our work has three main features. First, our argument  treats the full subcritical and critical phases simultaneously. Second, we show that the same multiplicative chaos measure arises from any  approximation to the Dirichlet series in which the prime cutoff is sent to $\infty$ and the critical line is approached from the right at arbitrary speeds. Such results are known as ``universality'' results, and we show this via a novel and quick partial summation trick. Third, we show that the multiplicative chaos measure is mutually absolutely continuous with a GMC measure in the same phase, coupled on the same probability space, almost surely. Our work unifies and extends results from a recent breakthrough trilogy of papers by Gorodetsky and Wong.
\end{abstract}

{\mbox{}
\maketitle
}
\vspace{-.5cm}

\section{Introduction}\label{sec:intro}
This article is concerned with the study of multiplicative chaos measures emerging from probabilistic models for a wide class of twisted Dirichlet series, such as the Riemann $\zeta$ function. 
The study of the statistical properties of Dirichlet series has a long history, dating back to the work of Bohr and Jessen from the 1930s \cite{BohrJessenI,BohrJessenII}; see \cite{sound-survey} for a nice survey.
The work here fits in a  line of research initiated by the predictions of Fyodorov, Hiary, and Keating \cite{fyodorov-keating,fhk} that these objects (specifically, they study the logarithm of the $\zeta$ function) are related to log-correlated fields. Indeed, many of their predictions for the $\zeta$ function have now been verified \cite{abr1,abr2,paquette-zeitouni}. It is also closely related to the phenomenon of better than squareroot cancellation in sums of multiplicative functions. In some of these settings, results for random models can be transferred over to the arithmetic objects themselves, making the study of these models an important intermediate step: see, e.g., \cite{granville2001large,SW20,harper2023typical}, as well as the ICM survey of Harper \cite{harper-icm} where this point is made explicitly.

We proceed by introducing our main object of study. In Section~\ref{subsec:background}, we provide  background for the study of random models of Dirichlet series and of multiplicative chaos that we hope will be motivating to both probabilists and number theorists.
Our main results are stated in \cref{subsec:results}. 

\subsection{Set-up}\label{subsec:setup}
Let $\alpha: \N\to \C$ denote the Steinhaus random multiplicative function: that is, $\alpha(nm) = \alpha(n) \alpha(m)$ for any pair $n,m \in \N$, and for each prime $p$,  $\alpha(p) = \alpha_p$, where $(\alpha_p)_{p}$ denotes a sequence of  iid random variables uniformly distributed on the unit circle $\S^1\subset \C$. Here and below, $p$ as an index always ranges through the primes.
(See Remark~\ref{rk:rademacher} for comments on the Rademacher case.)

Let  $\bM$ denote the set of \emph{multiplicative functions} $f:\N\mapsto\C$: that is, $f$ is in $\bM$ if and only if $f(nm)=f(n)f(m)$ for any pair of \emph{coprime} integers $n,m\in\N$. For  $\theta>0$, let $\bP_\theta\subset\bM$ denote the set of multiplicative functions $g$ satisfying the following:
\begin{equation}\label{eq:PNT}
    \sum_{p\leq x}g(p)=\theta\frac{x}{\log x}+\cE_g(x)\,,
\end{equation}
where $\cE_g(x)=o(\li(x))$, for $\li(x) \coloneq \int_2^x \frac{\d t}{\log t} \sim x/\log x$, 
and satisfies the integrability condition 
\begin{equation}\label{eq:remainder}
  \int_{2-}^{\infty}\frac{\cE_g(x)}{x^2}~d x<\infty.
\end{equation}
Recall the prime number theorem states $f \equiv 1$ is in $\mathbf{P}_1$.

This article is concerned with the \emph{random Euler product/Dirichlet series}: 
\begin{equation}\label{eq:TruncatedEP}
    D_N(\alpha f,s)\coloneqq\prod_{p\leq N}\Big(1+\sum_{j\geq 1}\frac{\alpha_p^j f(p^j)}{p^{js}}\Big)=\sum_{\substack{n\geq1 \\ p\vert n \Rightarrow p\leq N}}\frac{\alpha(n)f(n)}{n^{s}}\,,
\end{equation}
near the critical line $s = \frac12 + \ep+ ix$, where the \emph{twist} function $f$ satisfies $|f|^2\in\mathbf{P}_{\theta}$ for $\theta \in (0,1]$ (the range $(0,1)$ for $\theta$ is called the \emph{subcritical phase}, while $\theta = 1$ is the \emph{critical phase}). 
When $f\equiv 1$, $D_N$ is known as the random model of the Riemann $\zeta$ function. 
Examples of non-constant twists $f$ of arithmetic interest include the indicator function of sums of two squares as well as divisor functions. 
On the other hand, as explained below, our treatment of $D_N(\alpha f, \cdot)$ will not use any particular arithmetic properties of $f$, demonstrating that our results are mainly driven by the multiplicative structure of $f$ and the log-correlated structure of $D_N$.  

\subsection{Background and motivation} \label{subsec:background}

\subsubsection{Log-correlated fields and multiplicative chaos}  \label{subsubsec:probability-motivation}
The random Dirichlet series $D_N$ is of probabilistic interest because as $\ep \to 0$ and $N \to \infty$, the quantity $|\log D_N(\alpha f, \frac12 +\ep+ix)|$ exhibits the structure of a \emph{logarithmically-correlated field}:  its covariance function (in $x$) blows up logarithmically approaching the diagonal, see the discussion above \cref{prop:error}.
There is a rich theory behind such fields, which are predicted to form a universality class. In particular, exponentiating such fields (in this case, the quantity $D_N$ itself) is predicted to lead to a random measure called \emph{multiplicative chaos}, an important object which has featured prominently in fields as diverse as random surfaces, random matrices, branching processes, turbulence, and finance.
Giving meaning to the aforementioned exponentiation is non-trivial and has been the subject of research since the pioneering work of Kahane '85 \cite{kahanegmc}. While the theory of multiplicative chaos measures associated to Gaussian log-correlated fields (Gaussian multiplicative chaos, or GMC) is well-developed, the theory for non-Gaussian log-correlated fields is rather nascent. The random Dirichlet series form an intriguing subset of non-Gaussian models not only because they come from number theory, but also because, as discussed below, their universality results appear to be the first of their kind.

We now give some background on the theory of multiplicative chaos measures, focusing on the aspects relevant to the results of this article.

\textit{The Gaussian story.}
We start with a brief, informal description of GMC on $\R^d$. We refer to the survey \cite{RV14} or the more recent textbook \cite{berestycki-powell} for more details. 
Consider a Gaussian log-correlated field $h$, which can be formally understood as a Gaussian field on $\R^d$ satisfying
\[
	\E[h(x)] = 0\,, \qquad\qquad \E[h(x) h(y)] = 2d \log(|x-y|^{-1}) + g(x,y)\,,
\]
where $g: \R^d \times \R^d \to \R$ is a continuous bounded function.
While $h$ cannot be realized as a bona fide random function due to the logarithmic singularity of its covariance, it can be understood as a Gaussian generalized function. 
One then gives a rigorous interpretation to the measure 
\begin{align}\label{def:gmc-informal}
	M_{\theta}(\d x) \ ``\coloneqq" \ \frac{e^{\theta h(x)}}{\E[e^{\theta h(x)}]} \d x \,,
\end{align}
called the GMC measure with parameter $\theta > 0$ associated to $h$, 
by fixing a sequence of continuous functions $(h_{\ep})_{\ep >0}$ approximating $h$, replacing $h$ with $h_\ep$ in the right-hand side above, and then taking a limit as $\ep \to 0$. 
This was first carried out in the seminal work of Kahane \cite{kahanegmc} under an additional assumption on the structure of $h$, who showed that the limiting measure is nontrivial for $\theta \in (0,1)$, the subcritical phase of GMC, and is equal to $0$ for $\theta \geq 1$.

\textit{Universality.} A cornerstone of the GMC theory is that any suitably regular approximation $(h_\ep)_{\ep >0}$ to $h$ should yield the same limiting measure $M_{\theta}$, thereby justifying the interpretation \eqref{def:gmc-informal} and establishing that $M_{\theta}$ is intrinsic to $h$. 
This result is sometimes referred to as \emph{universality}, and was established across several works since Kahane \cite{10.1214/09-AOP490,shamov2016gaussian,BerestyckiSimplePath}. 

\textit{Criticality.} There is a fascinating theory at the critical point $\theta_c = 1$, for which a non-trivial, non-atomic measure can be constructed via various renormalization procedures that all lead to the same limiting measure \cite{DRSV14a,DRSV14b,junnila-saksman,powell18,APS19,PowellCrit}. We refer to the survey of Powell \cite{PowellCrit} and also the article of Lacoin \cite{lacoin22} for a detailed account of critical GMC. In this article, we are particularly interested in the \emph{Seneta-Heyde} renormalization:
\[
	\sqrt{\Var(h_\ep)} \frac{e^{\theta_c h_\ep(x)}}{\E[e^{\theta_c h_\ep(x)}]} \d x \to M_{\theta_c}(\d x)\,.
\]
In \cite{junnila-saksman}, universality for critical GMC is shown for a wide class of approximations of $h$.

\textit{Non-Gaussian multiplicative chaos.}
The theory of multiplicative chaos associated to non-Gaussian log-correlated fields is much less developed. For instance, the aforementioned GMC universality results rely crucially on Gaussian tools such as Karhunen-Lo\`eve decomposition, Kahane's convexity inequality, and Cameron--Martin--Girsanov identities. 
It is stated in the recent work \cite{goro-wong-2} that their results on the multiplicative chaos associated to random Dirichlet series appear to give the first universality results for non-Gaussian multiplicative chaos.
Moreover, there are few constructions of critical multiplicative chaos measures associated to log-correlated fields which are not asymptotically Gaussian. Such constructions are generally highly technical and require special features of the underlying field, see, e.g., \cite{jego21}. We refer to \cite[Section~1.2]{KK24} for a summary of recent results on non-Gaussian multiplicative chaos and \cite[Section~1.4]{goro-wong-3} for a summary of critical constructions in particular. 
Regarding the random Dirichlet series, we explain below Eq.\ \eqref{def:SNep} that 
\begin{align}\label{eq:dn-approx}
    \log D_N\big(\alpha f, \tfrac12+\ep+ix\big) \approx \sum_{p\leq N} \frac{f(p)}{p^{\frac12+\ep}}\alpha_p p^{-ix}\,.
\end{align}
The right-hand side takes the form of a random Fourier series,  and the parameters $\ep$ and $N$ serve as two different forms of approximation to the full log-correlated field $\log D_{\infty}(\alpha f, \frac12+ix)$: $\ep$ is akin to mollification by a Poisson kernel, while $N$ is akin to mollification by a Dirichlet kernel. Our universality result will involve sending $N$ and $1/\ep$ to $\infty$ at any speed, implying that either mollification, or arbitrary combinations of the mollifications, all result in the same limiting measure.

\subsubsection{Random models of multiplicative functions and Dirichlet series}\label{subsubsec:numbertheory-motivation}
Understanding the size of sums of arithmetic multiplicative functions  is a central problem in analytic number theory. A convincing example comes from the M\"obius function $\mu: \N\to\R$, which we recall is defined by $\mu(n) = 0$ if $n$ has any repeated prime factors, and otherwise $\mu(n) = 1$ if $n$ has an even number of prime factors and $-1$ if $n$ has an odd number of prime factors.  
The statement $|\sum_{n\leq x} \mu(n)| = o(x)$ as $x\to\infty$ is equivalent to the prime number theorem, while the statement $|\sum_{n\leq x} \mu(n)| \leq x^{1/2+o(1)}$ is equivalent to the Riemann hypothesis. 
Other similarly-important examples of arithmetic multiplicative functions $a:\N\to\C$ are the Dirichlet characters $a(n) = \chi(n)$, related to $L$-functions, and the continuous characters $a(n)= n^{-it}$, related to vertical shifts of Dirichlet series.

More generally, it is of interest to study \emph{twisted} sums: i.e., $\sum_{n\leq x} a(n)f(n)$, where $f\in \mathbf{M}$. The twist function $f$ can be chosen to weight integers $n$ based on certain arithmetic criteria, such as being a sum of two squares, the number of prime factors $n$ has, etc. Morevoer, we shall see that for twists $f$ which are multiplicative but do not satisfy any particular arithmetic properties, the sum $\sum_{n\leq x} a(n) f(n)$ exhibits  similar behavior to $\sum_{n\leq x} a(n)$, indicating that the underlying behavior of the sums is largely driven by multiplicativity. 

The connection between sums of $a(n) f(n)$ and their (truncated) Dirichlet series $D_N(af, s)$, defined in \eqref{eq:TruncatedEP}, can be made through Plancherel's identity \cite[Theorem~5.4]{montgomery-vaughan-1}: for any $\ep >0$, 
\begin{align}\label{eq:parseval}
    \int_0^{\infty} \frac{1}{x^{1+\ep}} \ \bigg|\!\!\!\sum_{\substack{n \leq x \\ p | n \Rightarrow p \leq N}} a(n)f(n) \bigg|^2 \d x  \ = \ \frac{1}{2\pi} \int_\R \frac{|D_N(af,\frac12 + \frac{\ep}{2}+ix)|^2}{|\frac12 + \frac{\ep}{2} + ix|^2} \d x\,.
\end{align}

Part of what makes the study of sums of $a(n)$ challenging is that the functions $a$ of arithmetic interest are often oscillatory, so that their sums $\sum_{n \leq x} a(n)$  exhibit non-trivial cancellations. An analogy is often made with sums of independent random variables: if each $a(n)$ was an independent, mean $0$, variance $1$ random variable, then according to the central limit theorem, the typical size of $\sum_{n\leq x} a(n)$ is $\sqrt{x}$. 
However, this independent model of $a(n)$ completely  ignores the dependence structure inherited from multiplicativity, and also fails to explain, for instance, the phenomenon of \emph{better than squareroot cancellation},  see \cite{harper-icm} for a detailed survey of this. 

By contrast, consider the Steinhaus random multiplicative function (RMF) $\alpha(n)$, which models the  complex-valued $\chi(n)$ and $n^{-it}$, and the Rademacher RMF $r(n)$, which models the M\"obius function and is defined by assigning iid Rademacher random variables to each $r(p)$, setting $r(n) = 0$ if $n$ is not squarefree, and defining $r$ multiplicatively otherwise. These RMFs, introduced by Wintner in the 1940s \cite{wintner}, assign iid values to the primes but still have multiplicative dependency. 
There are numerous ways in which these RMFs model their deterministic counterparts: we refer to \cite[Section~3]{harper-icm} for motivation for the validity of the RMF as a model, and to \cite{granville2001large,harper2023typical} for rigorous results in which establishing the results for RMFs first was a crucial intermediate step.

The random model $D_N(\alpha f, \cdot)$ arises naturally in a different way. For simplicity, consider the random model of $\zeta$, obtained from taking $f\equiv 1$. For $\sigma >1$, we have the Euler product representation of $\zeta(\sigma +it)$. Taking logarithms and using Taylor's theorem yields
\[
    \log \zeta(\sigma + it) \approx \sum_p \frac{p^{-it}}{p^\sigma}\,.
\]
Now, consider $t = \tau+ x$, for $\tau \sim \mathrm{Unif}[0,T]$, $T$ large, and $x$ lying in a compact interval. Due to the linear independence of $(\log p)_p$, one can see $(p^{-i\tau})_p$ converges as $T\to\infty$ to iid Steinhaus random variables (uniform on $\S^1 \subset \C$) in the sense of finite-dimensional distributions. Thus, for $\sigma >1$,
\[
    \log \zeta(\sigma + i(\tau+x)) \approx  \sum_p \frac{\alpha(p)}{p^\sigma}p^{-ix}\,.
\]
In \cite{SW20}, Saksman and Webb were able to show that for $\sigma =1/2$, the left-hand side converges in distribution as $T\to\infty$ to the right-hand side as certain generalized functions in $x$. This extends earlier work of Bagchi \cite{bagchi81} and Bohr--Jessen \cite{BohrJessenI,BohrJessenII}, showing the convergence when $\sigma > 1/2$.

\textit{Sums of RMFs and multiplicative chaos.}
In his seminal work, Harper 
\cite{harper2020moments} showed 
\begin{equation*}
    \E\Big|\sum_{n\leq N}\alpha(n)\Big|^{2q}\asymp\Big(\frac{N}{1+(1-q)\sqrt{\log\log N}}\Big)^q,\quad \forall q\in[0,1].
\end{equation*}
Taking $q=1/2$, one observes better than squareroot cancellation,  confirming a conjecture of Helson \cite{helson2010hankel} (we note that a simpler proof of Helson's conjecture was given in \cite{gorodetsky2025short}, though with a weaker result). 
The proof relies on the observation that the above moments are related to the critical multiplicative chaos associated to $D_N(\alpha,\frac12+ix)$, as can be seen from \eqref{eq:parseval}.

In a breakthrough trilogy of work by Gorodetsky and Wong \cite{goro-wong-1,goro-wong-2,goro-wong-3}, the result of Harper in the previous display is greatly improved. Gorodetsky and Wong establish the limiting distribution of $\sum_{n \leq N} \alpha(N) f(N)$ for $f$ lying in a certain subset of $\mathbf{P}_{\theta}$, $\theta \in (0,1]$, and in particular  obtain an exact first-order asymptotic for the moments of this sum.
Both the limiting distribution and the moment asymptotic
are expressed in terms of the multiplicative chaos measure associated to $D_N$.

\subsection{Discussion of main results}
\label{subsec:results}
Our work in the present article is greatly inspired by the aforementioned  trilogy of work by Gorodetsky--Wong.
Their first two papers  \cite{goro-wong-1, goro-wong-2} treat the so-called $L^2$ and $L^1$ regimes of the subcritical phase $\theta \in (0,1)$ respectively, while their third paper \cite{goro-wong-3} treats the critical phase $\theta =1$.
Each paper can be roughly divided into two parts. In each ``part 1'', the authors construct the multiplicative chaos 
arising from $D_N(\alpha f,\frac12+\ep_N+ix)$, for $f$ in a subset of $\bP_{\theta}$, and moreover show a universality result: the limiting multiplicative chaos measure is independent of the sequence $N,1/\ep_N\to \infty$ for a certain range of speeds $\ep_N\to 0$ (see the discussion below \eqref{eq:dn-approx} as to why this should be thought of as universality).
In each ``part 2'', the authors develop a martingale central limit theorem that allows them to connect  the limiting distribution of the random sums $\sum_{n \leq N} \alpha(n) f(n)$, normalized by an explicit function of $N$ and $f$, 
with the multiplicative chaos measure associated to $D_N(\alpha f,\frac12+\ep_N+ix)$  for a particular speed $\ep_N \to 0$. This is related to \eqref{eq:parseval}. Together, parts 1 and 2 allow for the determination of the limiting distribution of $\sum_{n\leq N} \alpha(n) f(n)$ as $N\to\infty$.

Our work here solely addresses  the ``part 1'' results of their work; we do not address the ``part 2'' results at all, other than to note that they each require a universality result from ``part 1'' in order to deduce the limiting distribution as $N,1/\ep_N \to \infty$.
Towards part 1, the authors develop different, interesting methods in each of \cite{goro-wong-1, goro-wong-2, goro-wong-3} to handle the unique challenges stemming from the lack of Gaussianity and the lack of integrability present in the $L^1$ and critical phases. 
Further, they mention in those papers that their universality results appear to be the first universality statements in the theory of non-Gaussian multiplicative chaos.

In this article, we present a unified proof of all  ``part 1'' results of \cite{goro-wong-1,goro-wong-2,goro-wong-3}, agnostic to whether the phase is critical or subcritical.
Further, our results expand the notion of universality for the limiting chaos measure and also enlarge the subset of twists in $\mathbf{P}_{\theta}$  for which this measure can be constructed throughout $\theta\in(0,1]$. 
Lastly, we show that the properties of this measure $\mu_\infty$ are in fact governed by GMC, in the sense that $\mu_\infty$ is mutually absolutely continuous with a GMC measure, almost surely.
This immediately yields new, fundamental properties of $\mu_\infty$, see \cref{rk:gmc-properties}.
We make a detailed comparison between our results and  those of \cite{goro-wong-1,goro-wong-2,goro-wong-3}
in \cref{rk:comparing-results}.

\begin{maintheorem}\label{thm:main}
    Let $\theta\in (0,1]$, and let $f: \N \to \C$ satisfy
    \begin{equation}\label{eq:SuffTwistCond}
     |f|^2\in\bP_\theta\,,\qquad
    \sum_{p\geq2}\bigg(
    \frac{|f(p)|^3}{p^{3/2}} \log^{2+\delta_1} p+
    \frac{|f(p^2)|^2}{p^2}\log^{1+\delta_2} p+\sum_{j\geq3}\frac{\vert f(p^j)\vert}{p^{j/2}}\bigg)<\infty
    \end{equation}
    for some $\delta_1, \delta_2 >0$.
    For each $\varepsilon\geq0$ and $N\geq0$, we set 
    \[
    V_{N,\varepsilon}(\theta) \coloneqq \one_{\theta\in (0,1)}+\one_{\theta=1}\sum_{p=1}^N \frac{|f(p)|^2}{p^{1+2\varepsilon}}\qquad\text{and}\qquad \mathrm{d}\rho_{N,\varepsilon}(x) \coloneqq \sqrt{V_{N,\varepsilon}(\theta)} \ \d x\,.
    \] 
    We define the random measure $\mu_{N,\varepsilon}$ on $\R$ via
    \begin{equation}\label{eq:mu}
        \mu_{N,\varepsilon}(\d x)\coloneqq\frac{\vert D_{N}(\alpha f,\frac12+\varepsilon+ix)\vert^{2}}{\E[\vert  D_{N}(\alpha f,\frac12+\varepsilon+ix)\vert^{2}]} \,\d\rho_{N,\varepsilon}(x) \,.
    \end{equation}
    There exists a nontrivial, non-atomic random measure $\mu_\infty$ on $\R$ such that the following hold.
    \begin{enumerate}[(i)]
        \item In the subcritical phase $\theta\in(0,1)$,
        for every $r\in[1,1/\theta)$ and $h\in C_c(\R)$,
    \begin{equation}
        \mu_{N,\varepsilon}(h)\xrightarrow[\min(N,\,1/\ep)\to\infty]{L^r}\mu_\infty(h).
    \end{equation}
        \item In the critical phase $\theta=1$, for every $h\in C_c(\R)$,  
    \end{enumerate}
    \begin{equation}\label{eq:critical-convergence}
        \mu_{N,\varepsilon}(h)\xrightarrow[\min(N,\, 1/\ep)\to\infty]{\P}\mu_\infty(h).
    \end{equation}
    Moreover, for all $\theta \in (0,1]$, there exists a GMC measure $\mu_{\infty}^{\mathrm{GMC}}$ of inverse temperature $\theta$ coupled on the same probability space as $\mu_{\infty}$ such that the two measures are mutually absolutely continuous, almost surely. The Radon-Nikodym derivative is continuous and  nonnegative, is positive $\mu_{\infty}^{\text{GMC}}$-almost everywhere, and its $L^\infty$-norm on any compact interval has finite positive moments.
\end{maintheorem}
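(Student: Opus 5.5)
We reduce to GMC by coupling. First, expand $\log D_N(\alpha f,\frac12+\ep+ix)$ using the Euler product:
\[
\log\Big(1+\sum_{j\ge1}\alpha_p^j f(p^j)p^{-js}\Big)=\frac{\alpha_p f(p)}{p^s}+R_p(s).
\]
By the summability conditions in \eqref{eq:SuffTwistCond}, namely the $|f(p)|^3p^{-3/2}$, $|f(p^2)|^2p^{-2}$ and $j\ge3$ terms, the remainders $\sum_p R_p(\frac12+\ep+ix)$ (suitably centred) converge uniformly on compacts. The convergence is almost sure, jointly in $(N,\ep)$, to a continuous random function. The logarithmic weights $\log^{2+\delta_1}p$ and $\log^{1+\delta_2}p$ give H\"older control in $x$ via Kolmogorov's criterion. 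This reduces the problem to the linear field
\[
S_{N,\ep}(x)=\sum_{p\le N}\frac{f(p)\alpha_p p^{-ix}}{p^{1/2+\ep}}.
\]

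Next, couple $S_{N,\ep}$ with a Gaussian field. Replace the block of primes in dyadic-type ranges $(e^{k},e^{k+1}]$, or each prime for large $p$, by a complex Gaussian $G_p$ with matching covariance. A Skorokhod/KMT-type or quantile coupling bounds the difference in the third moment, using $\sum_p|f(p)|^3p^{-3/2}\log^{2+\delta_1}p<\infty$. The difference field then converges uniformly on compacts almost surely to a continuous field $E(x)$.

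Concretely, I would write $|D_N|^2/\E|D_N|^2=e^{2\Re S^{G}_{N,\ep}-\Var}\cdot F_{N,\ep}(x)$. Here $S^{G}$ is Gaussian with log-correlated covariance
\[
\sum_p \frac{|f(p)|^2\cos((x-y)\log p)}{p^{1+2\ep}}\approx \theta\log\big(1/(|x-y|+\ep+1/\log N)\big),
\]
which follows from partial summation with \eqref{eq:PNT} and \eqref{eq:remainder}. The factor $F_{N,\ep}$ converges uniformly on compacts almost surely to a continuous positive function $F_\infty$.

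For the Gaussian part, use known GMC universality in the subcritical and critical regimes, the latter with Seneta–Heyde normalization $\sqrt{V_{N,\ep}}$ as in Junnila–Saksman. This gives $\mu^{\mathrm{GMC}}_{N,\ep}(h)\to\mu^{\mathrm{GMC}}_\infty(h)$ in probability, or in $L^r$ for $r<1/\theta$. The two-parameter limit $\min(N,1/\ep)\to\infty$ is handled by the partial summation trick. The $\ep$-regularization is an average of $N$-truncations:
\[
p^{-\ep}=\int \ep\, e^{-\ep u}\one_{\log p\le u}\,\d u.
\]
So $S_{N,\ep}$ is a mixture of $S_{M,0}$ with $M\le N$, and one compares the corresponding chaos measures through martingale structure in $N$.

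Then set $\mu_\infty=F_\infty\,\mu_\infty^{\mathrm{GMC}}$. Uniform convergence of $F_{N,\ep}$ together with the weak convergence gives \eqref{eq:critical-convergence}. The $L^r$ convergence in the subcritical case follows from uniform integrability, via moment bounds for $\sup F$ combined with GMC moments below $1/\theta$. Non-atomicity and mutual absolute continuity are inherited from the GMC measure. The moments of $\|F_\infty\|_{L^\infty(I)}$ come from Gaussian-type concentration, since the coupling error is a sum of independent bounded terms. Positivity $\mu^{\mathrm{GMC}}$-a.e. holds because zeros of $F_\infty$ form a set that GMC does not charge.

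The main obstacle is the coupling step. It must produce an almost surely uniformly convergent error uniformly in both $N$ and $\ep$, with exponential moments. The exact limiting weights $\log^{2+\delta_1}$ suggest a blockwise coupling with careful chaining, and I expect most of the work to lie there.
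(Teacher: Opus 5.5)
Your overall architecture is the paper's. You factor $|D_N|^2$ as $e^{2\Re S_{N,\ep}}$ times a convergent factor, couple $S_{N,\ep}$ with $G_{N,\ep}(x)=\sum_{p\le N}\phi_p p^{-\ep}Z_p p^{-ix}$ where $\phi_p=f(p)/\sqrt p$, invoke Junnila--Saksman, and recombine by uniform integrability. But the step you defer as ``the main obstacle'' is the heart of the proof, and neither of the concrete couplings you name works.

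Coupling prime by prime fails. For any coupling $|\alpha_p-Z_p|\ge\big|1-|Z_p|\big|$, so with independent per-prime couplings the error $\sum_p\phi_p(\alpha_p-Z_p)p^{-ix}$ at fixed $x$ is a sum of independent centred terms with total variance at least $c_0\sum_p|f(p)|^2/p=\infty$, where $c_0:=\E(1-|Z_p|)^2>0$.

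Replacing a whole scale $I_k=(e^k,e^{k+1}]$ by one Gaussian also fails. It forces you to freeze $p^{-ix}\approx p_{\min}^{-ix}$ over a range where $\log p$ varies by order $1$. Since $V_k=\sum_{p\in I_k}|\phi_p|^2\asymp 1/k$, the freezing error is of order $k^{-1/2}$ per scale, which is not summable.

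The missing idea is the paper's two-level blocking. Inside each $I_n$, group primes greedily into blocks $B$ with $\mathrm{cut}_n\le V_B\le 2\,\mathrm{cut}_n$, where $\mathrm{cut}_n=T_n^{2/3}$ and $T_n=\sum_{p\in I_n}|\phi_p|^3$. Then:
\begin{itemize}
\item A textbook Berry--Esseen bound, with rotational symmetry and Vallender's formula, couples each scalar block sum $\sum_{p\in B}\phi_p\alpha_p$ with a Gaussian at cost $\E\Delta_B\ll\min(V_B^{1/2},T_B/V_B)$. This totals $\ll T_n/\mathrm{cut}_n+\mathrm{cut}_n^{1/2}$ on the scale.
\item Kahane--Khintchine in $H^1(\mathcal I)$ bounds the freezing error of $B$ by $V_B^{1/2}(\log p_{\max}(B)-\log p_{\min}(B))$. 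This totals $\ll\mathrm{cut}_n^{1/2}$ on the scale, because the log-lengths of disjoint blocks in $I_n$ sum to at most $1$.
\item Both totals are $\ll T_n^{1/3}$, and $\sum_n T_n^{1/3}<\infty$ is exactly what the $\log^{2+\delta_1}$ weight gives via H\"older.
\item Exponential moments follow from independence across blocks and $\E e^{\lambda\Delta_B}-1\ll\E\Delta_B+e^{-c/V_B^{1/2}}$. You cannot rely on bounded increments, since the errors contain Gaussians.
\end{itemize}
A maximal KMT/Sakhanenko-type strong approximation of the partial sums within each scale, transferred to the field by Abel summation in $p$, might give an alternative route. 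However, you set up neither ingredient.

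Two further points. First, your identity $p^{-\ep}=\int_0^\infty\ep e^{-\ep u}\one_{\log p\le u}\,\d u$ is exactly the paper's partial summation trick, but it belongs on the linear error field, not on the chaos measures. An exponential of a mixture is not a mixture of exponentials, and Junnila--Saksman already handles the Gaussian two-parameter limit. For $E_{N,\ep}=S_{N,\ep}-G_{N,\ep}$ and $\ep>0$ the identity gives $E_{N,\ep}=\int_0^\infty\ep e^{-\ep u}E_{\min(N,e^u),0}\,\d u$. Hence $\sup_{N,\ep}\|E_{N,\ep}\|_\infty\le\sup_M\|E_{M,0}\|_\infty$, and almost sure uniform convergence as $\min(N,1/\ep)\to\infty$ follows from the $\ep=0$ case. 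So uniformity in $\ep$ is not part of the obstacle at all, and the same trick handles the $j=2$ Euler remainder.

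Second, $F_\infty$ is not positive: it vanishes exactly where $D_{N_0}(\alpha f,\frac12+ix)$ does. Here $N_0$ is the threshold below which Euler factors may vanish and no logarithm is available. That $\mu_\infty^{\mathrm{GMC}}$ does not charge this random set needs proof, because the set is coupled to the field. The paper takes $(\alpha_p)_{p\le N_0}$ independent of $(Z_p)_p$, which is harmless for finitely many primes. Conditionally on $(Z_p)_p$, Fubini then reduces matters to $\P\big(F_p(\alpha_p p^{-ix})=0\big)=0$ for each $x$, where $F_p(z)=1+\sum_{j\ge1}f(p^j)p^{-j/2}z^j$. This $F_p$ is analytic in the disc, continuous on its closure and equal to $1$ at $0$, so its zero set on the circle is Lebesgue-null.
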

We also believe our method could be used to treat the case where $\alpha(n)$ is replaced by the Rademacher RMF: see \cref{rk:rademacher}.
We outline the main ideas of the proof of \cref{thm:main} in Section~\ref{sec:proof-main}. As explained there, our Gaussian approximation method yields a coupled Gaussian field $G_{N,\ep}$  such that $| D_N(\alpha f, \frac12+\ep+ix)|^2$ is well-approximated by $\exp(2\Re G_{N,\ep}(x))$ uniformly over $N, \ep \geq 0$ and $x$ in any fixed compact interval.

\begin{remark}\label{rk:gmc-properties} 
In addition to nontriviality and non-atomicity, several properties of   $\mu_\infty$ for $\theta \in (0,1]$ follow as immediate consequences of absolute continuity with $\mu_\infty^{\text{GMC}}$ with continuous Radon-Nikodym derivative, such as the Hausdorff dimension of its support set and its multifractal spectrum.
\end{remark}

\begin{remark}
While $V_{N,\ep}(\theta) = 1$ for $\theta \in (0,1)$, the quantity $V_{N,\ep}(1)$ satisfies
\[
    \sup_{N,\ep\geq 0} \big|V_{N,\ep}(1) - \log\log\min\big(N,e^{1/\ep}\big)\big| \leq C
\]
for some constant $C>0$. This is a consequence of \eqref{eq:criteria1}.
\end{remark}

\begin{remark}
We actually prove \cref{thm:main} under the following weaker twist condition:
\begin{equation}\label{eq:TwistCondition}
    |f|^2\in\bP_\theta,\quad
    \sum_{n =1}^{\infty} \bigg(\sum_{e^n < p\leq e^{n+1}} \frac{|f(p)|^3}{p^{3/2}}  \bigg)^{1/3}
    + \sum_{n =1}^{\infty} \bigg(\sum_{e^n < p\leq e^{n+1}} \frac{|f(p^2)|^2}{p^2}\bigg)^{1/2}
    +
    \sum_{p\geq2}\sum_{j\geq3}\frac{\vert f(p^j)\vert}{p^{j/2}}<\infty\,.
    \end{equation}
The fact that  \eqref{eq:TwistCondition}  is implied by \eqref{eq:SuffTwistCond} can be seen from H\"older's inequality: for any $\delta_1>0$, 
\begin{align*}
\sum_{n=1}^\infty &\bigg(\sum_{e^n<p\le e^{n+1}} \frac{|f(p)|^3}{p^{3/2}}\bigg)^{1/3} = \sum_{n=1}^\infty  n^{-\frac{2+\delta_1}{3}}\bigg(\sum_{e^n<p\le e^{n+1}} n^{2+\delta_1}\frac{|f(p)|^3}{p^{3/2}}\bigg)^{1/3}\\
&\le \bigg(\sum_{n=1}^\infty \sum_{e^n<p\le e^{n+1}} n^{2+\delta_1} \frac{|f(p)|^3}{p^{3/2}}\bigg)^{1/3}\cdot\bigg(\sum_{n=1}^\infty n^{-\frac{2+\delta_1}{3}\cdot \frac{3}{2}}\bigg)^{\frac{2}{3}} \leq C \bigg(\sum_{p\ge 2} \frac{|f(p)|^3}{p^{3/2}}\log^{2+\delta_1} p\bigg)^{1/3}\,,
\end{align*}
for some  $C>0$.
Similarly, for any $\delta_2>0$, inserting $n^{1+\delta_2}$ and applying H\"older's inequality yields
\begin{align*}
    \sum_{n=1}^\infty \bigg(\sum_{e^n<p\le e^{n+1}} \frac{|f(p^2)|^2}{p^2}\bigg)^{1/2} 
   \leq C \bigg(\sum_{p\ge 2} \frac{|f(p^2)|^2}{p^2}\log^{1+\delta_2} p\bigg)\,.
\end{align*}
\end{remark}

\begin{remark}[Comparison with the work of Gorodetsky and Wong]
\label{rk:comparing-results}
Here, we explain the relationship between our results
and those of the closely-related work of Gorodetsky--Wong.
Our results correspond to \cite[Theorem~2.5]{goro-wong-1}, \cite[Theorem~1.1]{goro-wong-2}, and \cite[Theorem~1.1]{goro-wong-3}: that is, their ``part 1'' results, as explained at the start of this subsection. 

Firstly, our result identifies the limiting measure $\mu_\infty$  in terms of GMC and the field $D_N$ in terms of the exponential of a Gaussian log-correlated field for all $N\geq 1$, $\theta \in (0,1]$, and $f$ satisfying \eqref{eq:TwistCondition}. These descriptions  were previously absent in the literature aside from the untwisted case $f\equiv 1$ (and thus $\theta =1$), where these results were shown by Saksman--Webb \cite{SW20}. 

Regarding the class of twists $|f|^2\in \mathbf{P}_\theta$ that are treated, it is easiest to compare the twist conditions of Gorodetsky--Wong with our \eqref{eq:SuffTwistCond} (recall \eqref{eq:TwistCondition} is more general). 
For the critical phase $\theta =1$, \cite[Theorem~1.1]{goro-wong-3} is obtained for the untwisted model $f\equiv 1$ only.
They develop a method that shows the following: given a construction of $\mu_\infty$ as $N\to\infty$ for $\ep = 0$ and $f \in \mathbf{P}_1$ satisfying\footnote{
They remark in the second display of \cite[Section~2]{goro-wong-3} that their results hold for twists $f \in\mathbf{P}_1$ satisfying 
a weaker twist condition than \eqref{eq:GWTwist}, but it was confirmed to us in \cite{email} that this is a typographical error, and \eqref{eq:GWTwist} is indeed what they require. Observe $f\equiv 1$ does not satisfy the twist condition in \cite[Section~2]{goro-wong-3} since $\log^2 p/p$ is not summable.}
\begin{equation}\label{eq:GWTwist}
\sum_{p\ge 2} \bigg(\frac{|f(p)|^3}{p^{3/2}}\log^3 p+\frac{|f(p^2)|^2}{p^2}\log^2 p+\sum_{j\ge 3}\frac{|f(p^j)|}{p^{j/2}}\log^2 p\bigg)<\infty\,,
\end{equation}
they show one obtains the same limit for a class of speeds $\ep_N \to 0$  as $N\to \infty$ (a universality result elaborated on in the next paragraph).
However, before our article, the $\ep =0$ construction of $\mu_\infty$ was only done in the case $f\equiv 1$, by Saksman--Webb \cite{SW20}.
In the $L^1$ regime $\theta \in [1/2,1)$, \cite{goro-wong-2} requires their twists $f$ to satisfy  \eqref{eq:GWTwist},
so that our twist condition \eqref{eq:SuffTwistCond} features gains of various log powers on all terms. In the $L^2$ regime $\theta \in(0,1/2)$, their twist condition \cite[(2.17)]{goro-wong-1} is in fact weaker than  \eqref{eq:SuffTwistCond}. Their method takes advantage of the $L^2$-boundedness of $\mu_{N,\ep}$, which is not available for $\theta \geq 1/2$.

Regarding universality, we show convergence to the same limit as the prime cutoff $N$ tends to $\infty$ and the line $\Re s = 1/2+\ep$  approaches the critical line at an arbitrary joint speed: $\min (N,1/\ep)\to\infty$ for any $\theta \in (0,1]$. We also show $L^r$ convergence for the optimal range $r\in [1,1/\theta)$ for all $\theta \in (0,1)$, at any joint speed $\min (N,1/\ep)\to\infty$.
We describe in \cref{sec:proof-main} how our work builds on the work of \cite{SW20} to treat the $\ep = 0$ case, and then treats $\ep \geq 0$ via a  partial summation trick that will have future applications, see \cite{kim-kriechbaum-future}.
In the $L^1$ regime $\theta \in [1/2, 1)$,  \cite{goro-wong-2}  shows convergence of $\mu_{N,\ep}$ in $L^1$ as $\min (N,1/\ep)\to\infty$ and proves the full $L^r$ convergence when one of the two approximation parameters is fixed: either $N= \infty, 1/\ep \to \infty$ or $1/\ep = \infty, N\to\infty$. In the $L^2$ regime $\theta \in (0,1/2)$, \cite{goro-wong-1} shows $L^2$ convergence when one of the two approximation parameters is fixed, though the same methods of \cite{goro-wong-2} would allow for $\min (N,1/\ep)\to\infty$. 
The $\theta=1$ result of \cite{goro-wong-3} requires the off-critical parameter to be of the particular form $\ep \coloneqq \ep_N(u) = u/\log N$, with $u\geq 0$ fixed; however it was communicated to us \cite{email} that the same methods of \cite{goro-wong-2} could be used to allow for $\min(N,1/\ep)\to\infty$.
\end{remark}

\subsection{Notation and conventions} \label{subsec:notation} In what follows, we almost always work over a fixed compact interval $\cI \subset \R$. As such, we  write $\|f\|_{\infty}$ to denote the norm in $L^{\infty}(\cI)$, even when $f$ is defined on all of $\R$.  We use the standard (Bachmann-Landau) big O and little o notation. We also use Vinogradov's notation: $f(N) \ll g(N)$ means $f(N) = O(g(N))$ as $N\to\infty$.

\subsection*{Acknowledgments}
We heartily thank Ofir Gorodetsky and Mo Dick Wong for generous feedback on a draft of this article that in particular clarified \cref{rk:comparing-results}.
YHK was supported by NSF grant DMS-2502920. Much of this work was completed while YHK was a visitor at the Forschungsinstitut f\"ur Mathematik (FIM) at ETH Z\"urich and the Institut de Math\'ematiques de Toulouse (IMT) at the Universit\'e de Toulouse III Paul Sabatier. We thank CIMI LabEx for funding the visit to the IMT. YHK also thanks Sophia Loo for communicating to us \href{https://ems.press/content/serial-article-files/9616}{this story} about Banach, Nikodym, and Steinhaus, three mathematicians who feature prominently here.
XK was supported by the ANR MBAP-P (ANR-24-CE40-1833) project.

\section{The proof of Theorem~\ref{thm:main}}\label{sec:proof-main}
In this section, we explain the ideas behind the proof of Theorem~\ref{thm:main}, record the main inputs, and then prove \cref{thm:main} given these inputs. Our proof has three main parts: the isolation of the ``log-correlated part'' of $\log D_N$, a coupling with a Gaussian field on the critical line, and then a comparison with the Gaussian field arbitrarily to the right of the critical line.

Fix $f$ satisfying \eqref{eq:TwistCondition} throughout.
Our starting point is that 
\begin{equation}\label{def:SNep}
    S_{N,\varepsilon}(x)=\sum_{p\leq N}\frac{f(p)}{p^{\frac{1}{2}+\varepsilon}} \alpha_p \, p^{-ix}\,.
\end{equation}
forms the   ``log-correlated part of $\log D_N(\alpha f, \frac12+\ep+ix)$''
(note we do not literally study $\log D_N$ as zeros of $D_N$ may exist).
Motivation for $S_{N,\ep}$ capturing the singular properties of $\log D_N$ comes from isolating the $j=1$ term in the summation defining $D_N$ in \eqref{eq:TruncatedEP} as the dominant term. The field $S_{N,\ep}$ then emerges from  formally taking logarithms and using $\log (1+x) \approx x$. This heuristic is well-known and was already made rigorous in \cite{SW20} in the case $(f\equiv 1, \ep = 0)$
and in \cite{goro-wong-1,goro-wong-2,goro-wong-3} for a subclass of twists $|f|^2 \in \mathbf{P}_{\theta}$.
The analogous result for us is \cref{prop:error} below, which, compared to these other works, treats a larger class of twists and provides information on the zero set of $|D_N|^2/\exp(2\Re S_{N,\ep})$ needed for deducing the mutual absolute continuity part of \cref{thm:main}.
We prove \cref{prop:error}  in \cref{sec:error} using a new argument based on the blocking method described below \cref{prop:coupling}.

\begin{proposition}\label{prop:error}
Fix a compact interval $\cI\subset \R$ and $f\in\mathbf{M}$ satisfying \eqref{eq:TwistCondition}. As  $\min(N,1/\ep)\to\infty$, the random field 
$E_{N,\ep}^{(1)}(x) \coloneq |D_N(\alpha f, \frac12 + \ep + ix)|^2/\exp(2\Re S_{N,\ep}(x))$ converges almost surely to a continuous, nonnegative random function $E^{(1)}$, uniformly over  $x\in \mathcal{I}$. 
The zero set of $E^{(1)}$ is equal to the zero set of $D_{N_0}(\alpha f, \frac12+ix)$, 
where $N_0\in\N$ is a deterministic constant depending only on $f$. Moreover, for all $\lambda>0$, we have 
    \[
     \mathbb{E}\Big[\sup_{N, \ep \ge 0}\|E_{N,\ep}^{(1)}\|_{L^\infty(\cI)}^\lambda\Big]<\infty\,.
    \]

\end{proposition}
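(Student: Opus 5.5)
We factor the Euler product prime by prime. For each prime $p$ write $z_p \coloneqq \alpha_p p^{-ix}$ and
\[
F_p(z) \coloneqq 1+\sum_{j\ge1} \frac{f(p^j) z^j}{p^{j(\frac12+\ep)}}\,,
\]
so that
\[
E^{(1)}_{N,\ep}(x)=\prod_{p\le N} |F_p(z_p)|^2 e^{-2\Re(f(p)z_p p^{-\frac12-\ep})}.
\]
We fix a deterministic $N_0$ such that for $p>N_0$ the higher-order terms are small, and split the product at $N_0$.

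\textbf{Primes $p\le N_0$.} This part is a finite product of continuous functions of $(x,\ep)$. It is bounded by a deterministic constant using the summability $\sum_{j\ge3}|f(p^j)|p^{-j/2}<\infty$, together with the fact that for each fixed $p$ the quantity $|f(p^2)|/p$ is finite. Its zero set at $\ep=0$ is the zero set of $D_{N_0}(\alpha f,\frac12+ix)$, because the exponential factors never vanish.

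\textbf{Primes $p>N_0$.} We choose $N_0$ so that, uniformly in $|z|=1$ and $\ep\ge0$, we have $|F_p(z)-1|\le 1/2$. This uses $|f(p)|^2/p\to0$, which follows from $|f(p)|^3p^{-3/2}$ being summable, and likewise for the $p^2$ and $j\ge3$ terms. Then $\log F_p$ is defined, and
\[
\log F_p(z)-\frac{f(p)z}{p^{\frac12+\ep}} = -\frac{f(p)^2z^2}{2p^{1+2\ep}}+\frac{f(p^2)z^2}{p^{1+2\ep}}+R_p,
\]
with
\[
|R_p|\ll \frac{|f(p)|^3}{p^{3/2}}+\frac{|f(p)||f(p^2)|}{p^{3/2}}+\sum_{j\ge3}\frac{|f(p^j)|}{p^{j/2}}.
\]
The third-order remainders $\sum_p|R_p|$ are summable absolutely and deterministically; the cross term is handled by Cauchy–Schwarz. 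So the task reduces to the quadratic random sums
\[
Q_{N,\ep}(x)=\sum_{N_0<p\le N} c_p\,\alpha_p^2p^{-2ix-2\ep\log p}, \qquad c_p=\frac{f(p^2)-f(p)^2/2}{p}.
\]
These are centred, since $\E\alpha_p^2=0$, independent across $p$, and have square-summable coefficients: $\sum|f(p)|^4/p^2$ is controlled because $|f(p)|^2/p$ is bounded, combined with the cubic condition. So they converge almost surely for each fixed $(x,\ep)$. The issue is uniformity in $x\in\cI$ and in $\ep$, and the sup over all $(N,\ep)$ needed for the moment bound.

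\textbf{Blocking (the main obstacle).} For uniform convergence and exponential moments of $\sup_{N,\ep}\|Q_{N,\ep}\|_{\infty}$ we block primes into dyadic-exponential shells $e^n<p\le e^{n+1}$; this is exactly what the twist condition \eqref{eq:TwistCondition} is designed for.

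Within block $n$, $Q$ restricted to the block is a trigonometric polynomial in $x$ with frequencies of size $\asymp e^n$. Its $x$-derivative is therefore $\ll n$ times its size, so a chaining/net argument on a grid of mesh $e^{-n}$ gives
\[
\E\sup_{x\in\cI}|Q_n|^k \ll (\text{poly}(n)\,\sigma_n)^k, \qquad \sigma_n^2=\sum_{\text{block}}|c_p|^2.
\]
The same holds with a sup over partial cutoffs $N$ inside the block, by a maximal inequality (Lévy/Doob). The $\ep$-dependence is handled by partial summation in $\ep$: the weights $p^{-2\ep}$ are monotone in $p$ and bounded by $1$, so the sup over $\ep$ costs at most the maximal partial sum within the block.

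One then needs $\sum_n \mathrm{poly}(n)\,\sigma_n<\infty$. The condition $\sum_n(\sum_{\text{block}}|f(p^2)|^2/p^2)^{1/2}<\infty$ gives $\sum_n\sigma_n<\infty$ for the $f(p^2)$ part. A logarithmic loss from the sup in $x$ is a genuine concern here. I would try to avoid it by using the sub-Gaussian bound $\sqrt{\log(\#\text{net})}\approx\sqrt n$ and absorbing this factor via a sharper martingale argument, namely a Doob sup over blocks combined with Hölder; otherwise I would rely on the $\log^{1+\delta}$ room in \eqref{eq:SuffTwistCond}. Summing the blocks, Hoeffding-type concentration gives Gaussian tails for $\sup_{N,\ep}\|Q_{N,\ep}\|_\infty$, hence finiteness of $\E\exp(\lambda\sup\|Q\|)$ for all $\lambda$.

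\textbf{Conclusion.} The tail sums over blocks $n\ge n_0$ tend to $0$ almost surely uniformly in $x$ and $\ep$, which gives uniform almost-sure convergence as $\min(N,1/\ep)\to\infty$. The limit is
\[
E^{(1)}=\bigl[\text{the } p\le N_0 \text{ part at } \ep=0\bigr]\times\exp\bigl(2\Re(\text{convergent series})\bigr).
\]
The exponential factor is positive, so the zero set of $E^{(1)}$ is that of $D_{N_0}(\alpha f,\frac12+ix)$. Combining the deterministic bound for $p\le N_0$ with the exponential moments of $\sup\|Q\|$ and the summable $\sum_p|R_p|$ yields $\E\sup_{N,\ep}\|E^{(1)}_{N,\ep}\|^\lambda_{L^\infty(\cI)}<\infty$.
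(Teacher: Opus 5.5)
You have the same architecture as the paper: split at $N_0$, expand $\log F_p$ for $p>N_0$, control the cubic and higher-order terms deterministically, and treat the quadratic sum $Q_{N,\ep}$ over the shells $I_n$. Within each shell you take a maximal inequality for the supremum in $N$ and partial summation for the supremum in $\ep$. One small omission: your bound on $R_p$ should also contain $|f(p^2)|^2/p^2$, which comes from squaring the $j=2$ term. It is summable, since $\sum_n(\sum_{p\in I_n}|f(p^2)|^2p^{-2})^{1/2}<\infty$.

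The real problem is the step you flag as the main obstacle. You leave it unresolved, and under the proposition's hypothesis \eqref{eq:TwistCondition} it is a genuine gap.

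A net plus union bound inside block $n$ loses a factor that grows with $n$. The frequencies in block $n$ are $2\log p\in(2n,2n+2]$, so they are $\asymp n$, not $\asymp e^n$. With the natural mesh $1/n$ you lose at least $\sqrt{\log n}$; with your mesh $e^{-n}$ you lose $\sqrt n$. You would then need $\sum_n\sqrt{\log n}\,\sigma_n<\infty$ (respectively $\sum_n\sqrt n\,\sigma_n<\infty$), and \eqref{eq:TwistCondition} only gives $\sum_n\sigma_n<\infty$.

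Your proposed remedies do not close this:
\begin{itemize}
\item A Doob or L\'evy maximal inequality over blocks controls the supremum in $N$. It does not control the supremum in $x$ inside a block, so it cannot absorb the loss.
\item Retreating to \eqref{eq:SuffTwistCond} proves a weaker statement than the one claimed. With a $\sqrt n$ loss it fails even there: $\sum_n n^{1+\delta_2}a_n<\infty$ does not imply $\sum_n\sqrt{n a_n}<\infty$ when $\delta_2\le 1$.
\end{itemize}

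The missing idea is demodulation. For $p\in I_n$ write $p^{-2ix}=e^{-2inx}e^{-2i(\log p-n)x}$. The unimodular factor $e^{-2inx}$ does not change $\|\cdot\|_{L^\infty(\cI)}$. The remaining functions $c_p e^{-2i(\log p-n)x}$ have frequencies in $(0,2]$, so their $H^1(\cI)$-norms are $\ll|c_p|$.

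Combine three ingredients: the Sobolev embedding $H^1(\cI)\hookrightarrow L^\infty(\cI)$, the subgaussian Kahane--Khintchine inequality in the Hilbert space $H^1(\cI)$, and L\'evy's maximal inequality. With $\sigma_n^2\coloneqq\sum_{p\in I_n}|c_p|^2$, these give $\E\exp\bigl(\lambda\max_{P\in I_n}\|\sum_{p\in I_n,\,p\le P}c_p\alpha_p^2p^{-2ix}\|_{\infty}\bigr)-1\ll\sigma_n$ with no loss at all; this is \cref{cor:finite-block-H1}. Independence across blocks then gives $\E\exp(\lambda\sum_n\cdots)\le\exp(C_\lambda\sum_n\sigma_n)<\infty$.

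This is exactly \cref{lem:series-summability} and \cref{cor:series-summability-dos}. The paper applies them to $\sum_p a_{p,\ep}$ and to $S^{j=2}_{N,\ep}$, whose coefficients $(f(p^2)-f(p)^2)/p$ and $f(p)^2/(2p)$ add up to your $c_p$.

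Put differently: $|Q_n|$ varies on scale $1$ even though $Q_n$ itself oscillates at frequency $\asymp n$. A net at scale $1/n$ or $e^{-n}$ does not exploit this. If you replace your blocking step with this argument, the rest of your proof goes through under \eqref{eq:TwistCondition}.
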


\cref{prop:error} implies that, to understand the measures $\mu_{N,\ep}$ defined in \eqref{eq:mu}, it suffices to understand $\exp(2\Re S_{N,\ep}(x)) \d \rho_{N,\ep}$. 
Our strategy for understanding this family of measures is to show that $S_{N,\ep}$ is nearly Gaussian, in the following sense. In the sequel, we write $\cN^{\C}(0,1)$ to denote the law of the random variable $(X+iY)/\sqrt2$, where $X$ and $Y$ are iid standard Gaussians in $\R$. We prove $S_{N,\ep}$ may be coupled to the Gaussian field $G_{N,\ep}$, defined by 
\begin{equation}\label{eq:Gau}
    G_{N,\varepsilon}(x)\coloneqq\sum_{p\leq N}\frac{f(p)}{p^{\frac12+\ep}}Z_p \, p^{-ix}\,,
\end{equation}
where the $Z_p \sim \cN^{\C}(0,1)$ are iid,
such that the difference $S_{N,\ep}- G_{N,\ep}$ again converges in probability to a continuous bounded function uniformly on any compact interval as $\min(N,1/\ep) \to \infty$. This is nice because \cref{thm:main} can then be deduced from the results of \emph{Gaussian} multiplicative chaos\footnote{Namely, convergence and universality for the GMC associated to $G_{N,\ep}$.}, which, as mentioned in \cref{sec:intro}, are standard and collected in \cref{prop:gmc} below. Thus, by developing a Gaussian coupling, we bypass the complications of universality and the $L^1$ and critical phases, which are generally highly non-trivial in non-Gaussian cases as outlined in detail in \cite{goro-wong-2,goro-wong-3}.

We now describe how we achieve a coupling of $S_{N,\ep}$ and $G_{N,\ep}$.
We start by treating the case \emph{on the critical line} ($\ep = 0$): we construct a coupling of $(Z_p)_p$ and $(\alpha_p)_p$ such that $S_{N,0}$ is close to $G_{N,0}$. 
\begin{proposition}\label{prop:coupling}
Fix a compact interval $\mathcal I\subset \R$.
For any $f\in \mathbf{M}$ satisfying \eqref{eq:TwistCondition}, there exists a coupling between the $\alpha_p$ and an iid sequence $Z_p \sim \cN^{\C}(0,1)$  
such that
the error function 
\[
  E_{N,0}(x) \coloneqq S_{N,0}(x) - G_{N,0}(x)  = \sum_{p\leq N} \frac{f(p)}{\sqrt{p}} (\alpha_p-Z_p) p^{-ix}
\]
converges almost surely in $L^{\infty}(\mathcal{I})$ to a random continuous function $E(x)$. Moreover, for each $\lambda\geq 0$, we have the exponential moment bound
\begin{align}\label{eq:couplingthm-exp-moment-N}
  \E\big[\exp\big(\lambda \sup_{N\geq 0} \|E_{N,0}\|_{L^\infty(\cI)}\big) \big]<\infty\,,
\end{align}
which in particular implies 
\begin{align}\label{eq:couplingthm-exp-moment}
  \E \big[\exp(\lambda\|E\|_{L^\infty(\cI)})\big] < \infty \,.
\end{align}
\end{proposition}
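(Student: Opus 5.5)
We will build the coupling by a blocking argument in the primes. We partition the primes into dyadic-in-logarithm blocks $B_n=\{p: e^{n}<p\le e^{n+1}\}$ and couple block by block, using independent randomness for distinct blocks. Within a block $B_n$, the vector $(\alpha_p)_{p\in B_n}$ and $(Z_p)_{p\in B_n}$ cannot be coupled so that each $\alpha_p-Z_p$ is small, since $|\alpha_p|=1$ while $|Z_p|$ fluctuates. Instead we aim to make the \emph{block field}
\[
  X_n(x)\coloneqq\sum_{p\in B_n}\frac{f(p)}{\sqrt p}\,\alpha_p\,p^{-ix}
\]
close to its Gaussian analogue $Y_n(x)$ as a function of $x\in\cI$. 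The key point is that on $\cI$ the phases $p^{-ix}$ vary by only $O(1)$ inside a block, because $\log p\in(n,n+1]$. Writing $p^{-ix}=e^{-inx}e^{-i(\log p-n)x}$ and Taylor expanding $e^{-i(\log p - n)x}=\sum_k (-i(\log p-n)x)^k/k!$, we get $X_n(x)=e^{-inx}\sum_{k}\frac{(-ix)^k}{k!}W_{n,k}$, where
\[
  W_{n,k}=\sum_{p\in B_n}\frac{f(p)}{\sqrt p}(\log p-n)^k\alpha_p .
\]
The same expansion holds for $Y_n$ with Gaussian variables $W^G_{n,k}$. Since $|x|$ is bounded on $\cI$ and $(\log p -n)^k\le 1$, it suffices to couple finitely many of the $W_{n,k}$, say $k\le K$, with the tail in $k$ controlled deterministically via the $1/k!$ decay. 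Alternatively, we couple an infinite vector of these variables with geometric weights.

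For a single block, $(W_{n,k})_{k\le K}$ is a sum of independent, centered, rotation-invariant complex vectors with covariance matching that of $(W^G_{n,k})_{k}$ exactly, because $\E|\alpha_p|^2=\E|Z_p|^2=1$ and both are rotationally symmetric. We therefore apply a multivariate strong coupling (Zaitsev/KMT-type, or simply a Wasserstein-$p$ Berry–Esseen bound obtained via Lindeberg replacement or Stein's method) to get a coupling with
\[
  \big\|\,|W_{n,\cdot}-W^G_{n,\cdot}|\,\big\|_{L^q}\;\lesssim_{q,K}\;\Big(\sum_{p\in B_n}\frac{|f(p)|^3}{p^{3/2}}\Big)^{1/3}\eqqcolon \beta_n .
\]
This third-moment (Lyapunov) rate is exactly what the first summability condition in \eqref{eq:TwistCondition} provides. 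Indeed $\sum_n\beta_n<\infty$, so $\sum_n\|X_n-Y_n\|_{L^\infty(\cI)}$ has summable $L^q$ norms. Hence $E_{N,0}$ converges almost surely and in every $L^q$ along $N=e^{n}$, and uniformly in $x$. Partial blocks with $N$ inside $B_n$ are handled by the same coupling applied to the partial sums, or by a maximal inequality within the block.

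For the exponential moment \eqref{eq:couplingthm-exp-moment-N}, moment bounds of all orders are not enough. We need $\sup_N\|E_{N,0}\|$ to have Gaussian-type or at least exponential tails. Using the independence of the blocks, we write $\E e^{\lambda\sum_n\Delta_n}=\prod_n\E e^{\lambda\Delta_n}$ with $\Delta_n=\sup_{N\in B_n}\|X_n^{\le N}-Y_n^{\le N}\|_\infty$. It then suffices to show $\E e^{\lambda\Delta_n}\le 1+C_\lambda\beta_n$, and hence $\prod_n(1+C_\lambda\beta_n)<\infty$. This requires the block coupling error to have sub-Gaussian tails at scale $\beta_n$ (or at scale $\sigma_n=(\sum_{B_n}|f(p)|^2/p)^{1/2}$, with its mean of order $\beta_n$). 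Both sides are sub-Gaussian at scale $\sigma_n$, which is bounded since $|f|^2\in\bP_\theta$ gives $\sigma_n^2\approx\theta/n$. Combining a bound $\E\Delta_n\lesssim\beta_n$ with tails like $e^{-t^2/C\sigma_n^2}$ gives $\E e^{\lambda\Delta_n}\le 1+C\lambda\beta_n+O(\lambda^2\sigma_n\beta_n)$, which is summable in $n$. The main obstacle is obtaining a coupling with this quantitative rate \emph{and} uniformly good tails. I would first try the multivariate Zaitsev strong approximation theorem for sums of independent bounded vectors, which gives exponential tails directly. As a fallback, I would truncate the Gaussians $Z_p$ at a large level and use Stein's method in dimension $2K$.
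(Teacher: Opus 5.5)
The step that fails is the truncation in $k$. The tail $\sum_{k>K}\frac{(-ix)^k}{k!}(W_{n,k}-W^G_{n,k})$ is not controlled deterministically. The only deterministic bound on $W_{n,k}$ is $\sum_{p\in I_n}|f(p)|/\sqrt p$, which is enormous, and $W^G_{n,k}$ is Gaussian, so it has no deterministic bound at all.

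Suppose only $(W_{n,k})_{k\le K}$ are coupled and the $Z_p$ are then drawn conditionally. Set $\mu_n\coloneqq\sum_{p\in I_n}\frac{|f(p)|^2}{p}\delta_{\log p-n}$ and $V_n=\mu_n([0,1])$. The part of the Gaussian remainder that is $L^2(\mu_n)$-orthogonal to polynomials of degree $\le K$ is independent of the $\alpha_p$. So the scale-$n$ error at $x$ has variance at least $V_n c_K(x)^2$. Here $c_K(x)>0$ for $x\neq0$ is the relative $L^2(\mu_n)$-distance from $t\mapsto e^{-itx}$ to those polynomials.

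For $f\equiv1$ one has $V_n\asymp 1/n$ with $\mu_n$ spread over $[0,1]$. The scale errors are therefore independent and centred with variances $\gtrsim c_K^2/n$. These variances sum to $\infty$, so the series of errors does not converge at all. Your $K=0$ is exactly the paper's freezing over a whole scale, whose cost $V_n^{1/2}$ is why the paper sub-blocks; a fixed $K\ge1$ only improves the constant.

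Hence $K=K_n\to\infty$ is forced. Everything then rests on your claimed bound $\lesssim_{q,K}\beta_n$ holding with a constant independent of $K$. It must also be independent of the covariance $\Sigma_{kl}=\sum_{p\in I_n}\frac{|f(p)|^2}{p}(\log p-n)^{k+l}$, a Hankel moment matrix whose condition number grows exponentially in $K$ (for $f\equiv1$ it is essentially $V_n$ times the Hilbert matrix). Since \eqref{eq:TwistCondition} gives only $\sum_n\beta_n<\infty$, no growing factor is affordable. The tools you name do not provide this uniformity: Zaitsev-type strong approximations have constants polynomial in the dimension, and multivariate Berry--Esseen or Stein bounds depend on $\|\Sigma^{-1/2}\|$.

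Your fallback (an infinite vector with geometric weights, plus Lindeberg) can be completed, but that completion is exactly the missing argument. Take $\cI\subset[-R,R]$ and $w_k\coloneqq R^kW_{n,k}/k!$, so that $\|X_n-Y_n\|_\infty\le\sum_k|w_k-w^G_k|$. Smoothing a test function that is $1$-Lipschitz for this $\ell^1$-norm by Gaussian noise with coordinate variances $\eta^2 4^{-k}$ costs $O(\eta)$. Since $\alpha_p$ and $Z_p$ share first and second moments, Lindeberg replacement, using the Cameron--Martin bound on third derivatives, costs $\lesssim_R T_n\eta^{-2}$, where $T_n=\beta_n^3$. Choosing $\eta=\beta_n$ gives $W_1\lesssim_R\beta_n$ uniformly in the number of coordinates.

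The paper avoids multivariate coupling altogether. It splits each scale into blocks of variance about $\cut_n=T_n^{2/3}$. Freezing then costs $\lesssim\cut_n^{1/2}$ per scale, and the two-dimensional Berry--Esseen coupling of the frozen scalar block sums costs $\sum_B T_B/V_B\le T_n/\cut_n$. Both are of order $\beta_n$.

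Two smaller points. First, $\E e^{\lambda\Delta_n}\le1+C\lambda\beta_n+O(\lambda^2\sigma_n\beta_n)$ does not follow from a mean bound plus sub-Gaussian tails: a two-valued $\Delta_n$ with a large, very rare value violates it. The correct bound has an extra term $C_\lambda e^{-c_\lambda/\sigma_n}$, as in \cref{lem:small-mean-subgaussian}; it is summable here because $\sigma_n\lesssim n^{-1/2}$. Second, a coupling of full block sums says nothing about $\sup_N$ of partial-block differences, so $\Delta_n$ as you define it is not controlled. Instead, bound the residual partial block on each side separately by a maximal inequality, as in \cref{lem:residual-block}.
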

The $f\equiv 1$ case of \cref{prop:coupling} was shown by Saksman and Webb in \cite{SW20}. Our proof is inspired by their argument, which can be summarized as follows. First, they specify a deterministic sequence of scales $(r_m)_{m\in \N}$, $r_m \uparrow  \infty$ as $m \to\infty$, and consider ``blocks'' of the form 
\[
    Y_m(x) \coloneq \sum_{k= r_m}^{r_{m+1}-1} \frac{1}{\sqrt{p_k}} \alpha_{p_k} p_k^{-ix}\,,
\]
where $p_k$ denotes the $k$th smallest prime.
The sequence $(r_m)_{m\in\N}$ is chosen for the following.
\begin{enumerate}
    \item (Error from freezing) Defining a ``frozen'' version
    \[
        \widetilde{Y}_m(x) \coloneqq A_m  p_{r_m}^{-ix} \,, \qquad A_m \coloneqq \sum_{k= r_m}^{r_{m+1}-1} \frac{1}{\sqrt{p_k}} \alpha_{p_k}\,,
    \]
    the difference $\|Y_m - \widetilde{Y}_m\|_{L^{\infty}(\cI)}$ is summable in $m$. 
    \item (Error from coupling) For each $m\in \N$,  $A_m$ can be coupled with a complex Gaussian (being a sum of independent random variables), such that the difference between this Gaussian and the corresponding $A_m$ is summable in $m$.
\end{enumerate}

For general twists $f$ satisfying \eqref{eq:TwistCondition}, it is not possible to directly implement this strategy.
This is because $f$ can vanish, which makes their Gaussian coupling tool not applicable as written, and more seriously because the support of $f$ can be very sparse. As a result,  $r_{m+1}-r_m$ must grow very fast to make the coupling error small; however, this in turn makes the error from freezing large. 

We now describe how we overcome this, aiming for a high-level overview as the proofs are already very short.
Our construction proceeds by creating ``two levels'' of blocks. For each $n\in \N$, we define the \emph{scale} $I_n \coloneq \{p \text{ prime} : p \in (e^n, e^{n+1}]\}$ and a cutoff parameter $\cut_n$, defined in \eqref{def:cutn}. Within each scale, we group primes into blocks $B$ until the variance of the block 
first exceeds $\cut_n$. Note each scale contains at most one ``incomplete'' block $B$, having $V_B < \cut_n$. 
See \cref{subsec:block-coupling-scheme-proof-thm} for our full construction, as well as a detailed overview of how we bound the errors from freezing and coupling.
For now, we simply mention that $\cut_n$ is chosen so that the total error from freezing accumulated on $I_n$ is equal to the total error from coupling, and the $|f(p)|^3$ part of the twist condition \eqref{eq:TwistCondition} leads to these errors being summable over $n\in\N$. Our bound on the error from coupling is derived from a textbook Berry--Esseen theorem, which is the only Gaussian coupling input we need; see \cref{subsec:gaussian-coupling-tools}. Our bound on the error from freezing comes from subgaussian estimates on sums of elements of Hilbert spaces with iid coefficients, the theory of which dates back to Kahane \cite{KahaneBook}; see \cref{subsec:hilbert}.

We now turn our attention to bounding $S_{N,\ep} - G_{N,\ep}$ (i.e., the fields \emph{off the critical line}).
In \cite[Page~4]{goro-wong-2}, it is mentioned that a Gaussian coupling strategy may ``not work very well for the approximation away from the critical line'', since one would need to implement an uncountable collection of couplings indexed by $\ep\geq 0$. 
This sounds very reasonable, and so one of our contributions is a partial summation trick that allows us to quickly deduce the closeness of $S_{N,\ep}$ and $G_{N,\ep}$, uniformly over $\ep \geq 0$, from the  closeness of $S_{N,0}$ and $G_{N,0}$ (i.e., the fields \emph{on the critical line}). 
In other words, once we have a coupling on the critical line, we reduce the question of universality to partial summation and GMC universality.
This is \cref{prop:off-critical-error}, proved in \cref{sec:off-critical-errors}.

\begin{proposition}\label{prop:off-critical-error}
    Fix a compact interval $\cI\subset \R$. For any $f \in \mathbf{M}$ satisfying \eqref{eq:TwistCondition},
    as $\min(N,1/\ep)\to\infty$, 
    the error function
    \[
    E_{N,\varepsilon}(x)\coloneqq S_{N,\varepsilon}(x)-G_{N,\varepsilon}(x)
    \]
    converges almost surely in $L^{\infty}(\cI)$ to the random function $E(x)$ from \cref{prop:coupling}. In addition, 
    \[
    \E\Big[ \exp\Big(\lambda  \sup_{N,\varepsilon\geq0} \|E_{N,\varepsilon}\|_{L^\infty(\cI)}\Big) \Big] <\infty\,, \qquad \text{for each } \lambda >0\,.
    \]
\end{proposition}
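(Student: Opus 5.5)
The plan is to use partial summation in the prime cutoff, writing $E_{N,\ep}$ as an average of the critical-line errors $E_{M,0}$ over $M\le N$. Define $a_p(x)\coloneqq \frac{f(p)}{\sqrt p}(\alpha_p-Z_p)p^{-ix}$, so that $E_{M,0}(x)=\sum_{p\le M}a_p(x)$ and
\[
E_{N,\ep}(x)=\sum_{p\le N} p^{-\ep}a_p(x).
\]
Abel summation with the weight $w(t)=t^{-\ep}$ gives
\[
E_{N,\ep}(x)=N^{-\ep}E_{N,0}(x)+\ep\int_{2}^{N}E_{t,0}(x)\,t^{-1-\ep}\,\d t ,
\]
where $E_{t,0}\coloneqq E_{\lfloor t\rfloor,0}$. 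The crucial feature is that the weights are nonnegative: $N^{-\ep}+\ep\int_2^N t^{-1-\ep}\,\d t=2^{-\ep}\le 1$. So $E_{N,\ep}(x)$ is a sub-probability average of the $E_{t,0}(x)$, $t\le N$, with the same weights for every $x$. (There is a small boundary mass $1-2^{-\ep}$ sitting at $E_{\cdot,0}=0$, which is harmless.)

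First I obtain the moment bound. Taking sup norms over $\cI$ inside the average gives
\[
\|E_{N,\ep}\|_{L^\infty(\cI)}\le \sup_{M}\|E_{M,0}\|_{L^\infty(\cI)}
\]
for all $N$ and $\ep\ge0$, including $N=\infty$ when the series converges. The exponential moment bound is then immediate from \eqref{eq:couplingthm-exp-moment-N} in \cref{prop:coupling}.

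Next comes the convergence. Let $\eta>0$, and use the almost sure uniform convergence $E_{M,0}\to E$ from \cref{prop:coupling} to pick a random $M_0$ with $\|E_{M,0}-E\|_{L^\infty(\cI)}<\eta$ for $M\ge M_0$. Subtract $2^{-\ep}E$ using the same weight decomposition:
\[
E_{N,\ep}-2^{-\ep}E = N^{-\ep}(E_{N,0}-E)+\ep\int_2^N (E_{t,0}-E)t^{-1-\ep}\,\d t .
\]
Split the integral at $M_0$. The part over $t\ge M_0$, together with the boundary term when $N\ge M_0$, is at most $\eta$ times total weight $\le1$. The part over $t<M_0$ is at most $2\sup_M\|E_{M,0}\|_{L^\infty(\cI)}\cdot \ep\log M_0$, which tends to $0$ as $\ep\to0$.

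Two regimes of $\min(N,1/\ep)\to\infty$ have to be covered, and this is the only place needing care. If $\ep\to0$ and $N\to\infty$ jointly, the above suffices, and $2^{-\ep}\to1$. If $\ep$ stays bounded below while $N\to\infty$, then $\min(N,1/\ep)\to\infty$ forces $\ep\to0$ anyway, so this regime does not occur. If $N=\infty$ is allowed with only $\ep\to0$, the boundary term vanishes; this requires $E_{\infty,0}=E$, which is exactly the almost sure limit.

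I also need $N\ge M_0$ eventually, which holds since $N\to\infty$. With $N$ possibly finite but $N\ge 1/\ep$ large, the boundary term is $N^{-\ep}\|E_{N,0}-E\|<\eta$. Hence $\limsup\|E_{N,\ep}-E\|_{L^\infty(\cI)}\le\eta$ almost surely, and since $\eta$ is arbitrary the proof is done.

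The main obstacle is minor: justifying the Abel summation identity uniformly in $x$, which is a finite sum for $N<\infty$, and handling $N=\infty$ by letting $N\to\infty$ in the identity, using the almost sure boundedness of $E_{t,0}$ and $\int_2^\infty\ep t^{-1-\ep}\,\d t<\infty$.
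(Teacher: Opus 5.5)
Your proposal is correct and is essentially the paper's proof. The paper uses the same partial summation, done discretely over consecutive primes: the weights $p_m^{-\ep}-p_{m+1}^{-\ep}$ together with $p_{\index(N)}^{-\ep}$ sum to $p_1^{-\ep}=2^{-\ep}$, in place of your continuous weights $\ep t^{-1-\ep}\,\d t$ plus $N^{-\ep}$. From this the paper gets the exponential moment bound via $\|E_{N,\ep}\|_\infty\le\sup_N\|E_{N,0}\|_\infty$, and proves convergence by splitting the weighted average at a random index beyond which $\|E_{p_m,0}-E\|_\infty<\delta$, exactly as you do.
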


Finally, we state the aforementioned GMC input corresponding to the Gaussian fields  $G_{N,\ep}$, and then show how this combines with \cref{prop:error} and \cref{prop:off-critical-error} to yield \cref{thm:main}. Define the measures
\begin{equation}\label{eq:GMC-measures}
    \mu_{N,\varepsilon}^{\text{GMC}}\coloneq\frac{e^{2\Re  G_{N,\varepsilon}(x)}}{\E e^{2\Re  G_{N,\varepsilon}(x)}} \d\rho_{N,\varepsilon}(x)\,,
\end{equation}
where we recall the reference measures $\d\rho_{N,\varepsilon}(x)$ from the statement of Theorem~\ref{thm:main}. The following proposition is proved in Appendix~\ref{sec:app}.
\begin{proposition}[Universality of GMC]\label{prop:gmc}
Fix a compact interval $\cI$ and $f\in\mathbf{M}$ satisfying \eqref{eq:TwistCondition}.
     In both the critical and subcritical phases, in the space $\mathcal{M}(\cI)$ of Radon measures on $\cI$ equipped with the topology of weak convergence,  the sequence of measures $\mu_{N,\ep}^{\text{GMC}}$ converges in probability as $\min(N,1/\varepsilon)\to\infty$ to a non-trivial, non-atomic measure $\mu_{\infty}^{\text{GMC}}$.
    In the subcritical phase $\theta\in(0,1)$, $\mu_{N,\varepsilon}^{\text{GMC}}(\cI)$ converges in $L^r$ as $\min(N,1/\varepsilon)\to\infty$ to $\mu_{\infty}^{\text{GMC}}(\cI)$ for all $r\in[1,\frac1\theta)$.
\end{proposition}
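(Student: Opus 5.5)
The plan is to reduce \cref{prop:gmc} to the standard convergence and universality theory for GMC associated with log-correlated Gaussian fields, after checking that $2\Re G_{N,\ep}$ is an admissible approximation. First I will compute the covariance. Since $\E[Z_pZ_q]=0$ and $\E[Z_p\overline{Z_q}]=\delta_{pq}$, the field $X_{N,\ep}(x)\coloneqq 2\Re G_{N,\ep}(x)$ satisfies
\[
\E[X_{N,\ep}(x)X_{N',\ep'}(y)] = 2\sum_{p\le \min(N,N')} \frac{|f(p)|^2}{p^{1+\ep+\ep'}}\cos\big((x-y)\log p\big).
\]
Partial summation against \eqref{eq:PNT}, with the integrability condition \eqref{eq:remainder} controlling the error $\cE_{|f|^2}$, will show that this covariance equals
\[
2\theta\log\Big(\min\big(\log\min(N,N'),\ (\ep+\ep')^{-1},\ |x-y|^{-1}\big)\Big)+O(1),
\]
uniformly for $x,y\in\cI$. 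In particular $\Var X_{N,\ep}(x)=2V_{N,\ep}(1)$-type quantities are $2\theta\log\log\min(N,e^{1/\ep})+O(1)$. The limiting field $X=X_{\infty,0}$ is therefore log-correlated with parameter $\theta$ in the normalisation of \eqref{def:gmc-informal} with $d=1$, so $\theta<1$ is subcritical and $\theta=1$ critical.

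Next I will fix the GMC framework. The cleanest route is to view $X$ as a sum of independent pieces over the scales $I_n$, namely $X=\sum_n X^{(n)}$ with $X^{(n)}(x)=2\Re\sum_{p\in I_n}f(p)p^{-1/2}Z_pp^{-ix}$. Each approximation $X_{N,\ep}$ is then $\sum_n X^{(n)}_{N,\ep}$, where the $n$-th piece is damped by $p^{-\ep}$ and truncated at $N$. To this family I will apply a universality theorem for approximations with comparable covariances: Junnila--Saksman \cite{junnila-saksman} for the critical case with Seneta--Heyde normalisation $\sqrt{V_{N,\ep}(1)}$, and Shamov/Berestycki-type universality \cite{shamov2016gaussian,BerestyckiSimplePath} in the subcritical case. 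Concretely, I will verify their hypotheses:
\begin{itemize}
\item the approximations $\E[X_{N,\ep}(x)X(y)]$ converge to the log kernel locally uniformly off the diagonal, which follows from the covariance formula and dominated convergence in $p$;
\item the approximations are uniformly comparable to the log kernel on the diagonal, with an $O(1)$ error uniform in $(N,\ep)$;
\item the covariance has a positive-definite "$\star$-scale" structure.
\end{itemize}
Since the statement concerns $\min(N,1/\ep)\to\infty$ along arbitrary joint paths, I will argue along arbitrary sequences $(N_k,\ep_k)$ and show every such sequence has the same limit. Convergence in probability of $\mu^{\mathrm{GMC}}_{N_k,\ep_k}(h)$ to a common $\mu_\infty^{\mathrm{GMC}}(h)$ then gives convergence in probability in $\mathcal M(\cI)$, using a countable dense set of test functions. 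Nontriviality and non-atomicity are standard for GMC: in the subcritical case via $\E\mu(\cI)=|\cI|$ and $L^r$ bounds, and in the critical case by the known results for critical GMC.

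For the $L^r$ statement with $\theta<1$, I will show that $\sup_{N,\ep}\E[\mu^{\mathrm{GMC}}_{N,\ep}(\cI)^r]<\infty$ for some $r'\in(r,1/\theta)$. I plan to do this by Kahane's convexity inequality, comparing with an exactly scale-invariant field whose covariance dominates up to $O(1)$; that field has finite moments of order $<1/\theta$. Uniform integrability of $|\mu_{N,\ep}(\cI)|^r$ then upgrades convergence in probability to $L^r$ convergence.

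I expect the main obstacle to be justifying universality over the two-parameter family, especially in the critical case. The damping $p^{-\ep}$ is not a convolution mollifier, so the Junnila--Saksman hypotheses need checking directly. My first attempt would be to write $p^{-\ep}$ via partial summation as a mixture of sharp cutoffs, $p^{-2\ep}=\int_p^\infty 2\ep t^{-2\ep-1}\,\d t$. This would express $X_{N,\ep}$ as a Gaussian field whose covariance is an average of cutoff covariances, and so reduce the critical case to comparison with the pure-$N$ cutoff family via the uniform covariance estimate above.
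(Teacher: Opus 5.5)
\textbf{Gap: nothing in your plan yields convergence in probability.} Your covariance asymptotics and the $L^r$ step (Kahane convexity against a scale-invariant reference field, then uniform integrability) match the paper. The problem is the central claim of the statement. The hypotheses you list (uniform $O(1)$ closeness of covariances, convergence off the diagonal) are covariance-level conditions of the type in \cite[Theorem~1.1]{junnila-saksman}. Such conditions only transfer convergence \emph{in law} from one approximating family to another. The paper uses exactly this, with $R=\min(\lfloor e^{1/\varepsilon}\rfloor,N)$, but only to get universality in distribution.

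Your subsequence argument then assumes that $\mu^{\mathrm{GMC}}_{N_k,\varepsilon_k}(h)$ converges in probability to a common limit, which is what has to be proved. This matters downstream: the proof of the main theorem multiplies $\mu^{\mathrm{GMC}}_{N,\varepsilon}$ by densities $X_{N,\varepsilon}$ on the same probability space, so a limit in law is not enough. For $\theta<1$ your cross-covariance condition is of Shamov's type, and his theory could plausibly give in-probability convergence. At $\theta=1$, however, Shamov's theory is unavailable and there is no martingale in $\varepsilon$. Relatedly, you need an in-probability construction of $\lim_N\mu^{\mathrm{GMC}}_{N,0}$ for this specific, non-standard field. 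Known critical-GMC results cover reference fields, and comparing with them again gives only equality in law. The paper imports this base case from Saksman--Webb and Gorodetsky--Wong.

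\textbf{Missing idea: the damping is a Fourier multiplier.} The obstacle you name is not real. At frequency $\xi=\log p$, the damping $p^{-\varepsilon}$ is exactly the multiplier $e^{-\varepsilon|\xi|}$, i.e.\ Poisson-kernel smoothing. The cutoff $p\le N$ is the multiplier $\mathbbm{1}_{|\xi|\le\log N}$. The paper defines
$\mathcal{R}_{N,\varepsilon}e^{i\xi x}=\mathbbm{1}_{|\xi|\le\log N}e^{-\varepsilon|\xi|}e^{i\xi x}$. It then checks that $\mathcal{R}_{N,\varepsilon}(2\Re G_{R,0})=2\Re G_{N\wedge R,\varepsilon}$ and that $\mathcal{R}_{N,\varepsilon}h\to h$ uniformly for each fixed $h$. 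So $(2\Re G_{N,\varepsilon})$ is a linear regularization process of $(2\Re G_{R,0})_R$, and \cite[Theorem~4.4]{junnila-saksman} upgrades in-law universality to convergence in probability.

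The mechanism is this: for fixed $R$, the contribution of $p\le R$ to $2\Re G_{N,\varepsilon}$ converges uniformly to $2\Re G_{R,0}$, while the contribution of $p>R$ is independent of it. Your mixture-of-cutoffs representation in the last paragraph is used only at the level of covariances. Covariance information alone can never give more than convergence in law; what is needed is a linear, pathwise relation between the fields, such as the one the multiplier provides.
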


\begin{proof}[Proof of Theorem~\ref{thm:main}]
We first show convergence in probability of $\mu_{N,\ep}$ to $\mu_{\infty}$ for $\theta \in (0,1]$. 
Fix $h \in C_c(\R)$, and let the support of $h$ be contained in some compact interval  $\mathcal{I} \subset \R$.  
We use the following consequence of the continuous mapping theorem for convergence of measures (see also \cite[Lemma~2.10]{goro-wong-2}): if $X_n \xrightarrow{\P}X$ in the space of continuous functions $C(\mathcal{I})$ and $\nu_n \xrightarrow{\P} \nu$ in  $\mathcal{M}(\cI)$, then $X_n \d \nu_n \xrightarrow{\P} X \d \nu$ in $\mathcal{M}(\cI)$.
Define
    \begin{equation}\label{def:xne}
        X_{N,\varepsilon}(x) \coloneq \frac{\mathrm{d}\mu_{N,\varepsilon}(x)}{\mathrm{d}\mu_{N,\varepsilon}^{\text{GMC}}(x)} = E_{N,\ep}^{(1)}(x)\, e^{2\Re E_{N,\varepsilon}(x)} \, \frac{\mathbb{E}[e^{2\Re G_{N,\varepsilon}(x)}]}{\mathbb{E}[|D_N(\alpha f, \frac{1}{2}+\varepsilon+ix)|^2]}\,.
    \end{equation}
Since $\d \mu_{N,\ep} = X_{N,\ep} \d \mu_{N,\ep}^{\mathrm{GMC}}$, and since $E_{N,\ep}^{(1)}$, $E_{N,\ep}$, and $\mu_{N,\ep}^{\mathrm{GMC}}$ all converge in probability due to Propositions~\ref{prop:error},~\ref{prop:off-critical-error}, and~\ref{prop:gmc}, convergence in probability of $\mu_{N,\ep}$ to $\mu_\infty$ follows once we show
    \begin{align}\label{eq:proof-thm1-pfratio}
        \lim_{\min(N,1/\ep)\to \infty}\frac{\mathbb{E}[e^{2\Re G_{N,\varepsilon}(x)}]}{\mathbb{E}[|D_N(\alpha f, \frac{1}{2}+\varepsilon+ix)|^2]} = \mathfrak{c}
    \end{align}
    uniformly over $x\in \cI$, for some $\mathfrak{c} \in (0,\infty)$.
    The numerator is equal to
    \begin{align*}
        \E \Big[e^{2\Re G_{N,\varepsilon}(x)}\Big]
        &=
        \exp\bigg(\sum_{p\leq N}\frac{|f(p)|^2}{p^{1+2\varepsilon}}\bigg)\,,
    \end{align*}
    using the formula for the Laplace transform of a Gaussian. 
    The denominator is evaluated using independence of the $\alpha_p$ and $\E[\alpha_p^k] = 0$ for $k\neq 0$:
    \begin{multline*}
        \E\big[\vert  D_{N}(\alpha f,\tfrac12+\varepsilon+ix)\vert^{2}\big] \\
        =\prod_{p\leq N}\E\Bigg[\bigg|1+\sum_{j\geq1}\frac{\alpha_p^jf(p^j)}{p^{j(\frac12+\ep+ix)}}\bigg|^2\Bigg]  = \prod_{p\le N} \sum_{j,k\ge 0} \, \frac{f(p^k)\overline{f(p^j)}}{p^{\frac{j+k}{2}+(j+k) \varepsilon}} \, p^{-ikx+ijx} \, \mathbb{E}\big[\alpha_p^{k-j}\big]
        = \prod_{p\le N} \sum_{k\ge 0} \frac{|f(p^k)|^2}{p^{k(1+2\varepsilon)}}\,.
    \end{multline*}
    Taking the logarithm of the ratio, it therefore suffices to show convergence as $\min(N,1/\varepsilon)\to\infty$ of
    \begin{equation}\label{eq:DomConvPre}
        \sum_{p\le N} \bigg( \log\bigg(1+\frac{|f(p)|^2}{p^{1+2\varepsilon}}+\sum_{k\ge 2} \frac{|f(p^k)|^2}{p^{k(1+2\varepsilon)}}\bigg)-\frac{|f(p)|^2}{p^{1+2\varepsilon}}\bigg)\,.
    \end{equation}
    We  do this by dominated convergence. Because each summand in the last display converges as $\min(N,1/\varepsilon)\to\infty$, we only need to show  they are dominated by a summable sequence.
    From Taylor's theorem and the inequality $2ab \leq a^2+b^2$, we have $|\log(1+a+b) - a| \ll a^2+b+b^2$ for $a, b\geq 0$ bounded.  Since the twist condition \eqref{eq:TwistCondition} implies the supremum of the summands in \eqref{eq:DomConvPre} is finite, we may apply this inequality to obtain
    \begin{align*}
        \bigg|\log\bigg(1+\frac{|f(p)|^2}{p^{1+2\varepsilon}}+\sum_{k\ge 2}\frac{|f(p^k)|^2}{p^{k(1+2\varepsilon)}}\bigg)-\frac{|f(p)|^2}{p^{1+2\varepsilon}}\bigg|
        \ll \frac{|f(p)|^4}{p^2} + \sum_{k\geq 2} \frac{|f(p^k)|^2}{p^k}\,,
    \end{align*}
    where the implied constant is independent of $p$ and $\ep$. Now,  $\sum_p |f(p)|^4/p^2<\infty$ because $|f|^2\in \mathbf{P}_{\theta}$ implies $\sup |f(p)|/\sqrt{p} <\infty$, while the $|f(p)|^3$ term in \eqref{eq:TwistCondition} implies $\sum_p |f(p)|^3/p^{3/2} <\infty$. The sum over $k\geq 2$ in the right-hand  of the above display is similarly bounded by \eqref{eq:TwistCondition}. Thus, the right-hand side is summable over $p$, implying convergence of \eqref{eq:DomConvPre}. This proves \eqref{eq:proof-thm1-pfratio}.

    We just showed $X_{N,\ep}$ converges uniformly  over $\cI$ in probability to $X(x):= \mathfrak{c} E^{(1)}(x) \exp(2\Re E(x))$, and $X = \d \mu_\infty/\d \mu_\infty^{\text{GMC}}$. We now establish the properties of $X$ stated in \cref{thm:main}. Finiteness of positive moments of $X$ follows from the  moment bounds in \cref{prop:error,prop:off-critical-error}. 
    The zero set of $X$ is equal to the zero set of $E^{(1)}$, which  by \cref{prop:error} is the zero set of $D_{N_0}(\alpha f, \frac12+ix)$. Call this random set $\mathcal{Z}$. 
    
    We now show $\mu_\infty^{\text{GMC}}(\mathcal{Z}) = 0$, almost surely. 
    Recall $N_0$ from the definition of $\mathcal{Z}$ in \cref{prop:error}. We may assume in the coupling of $\alpha_p$ and $Z_p$ constructed in \cref{prop:coupling} that $(\alpha_p)_{p\leq N_0}$ is independent of $(Z_p)_p$, and thus of $\mu_\infty^{\text{GMC}}$: of course, this changes nothing for the conclusion of \cref{prop:coupling} since $N_0<\infty$.
    Next, define $F_p(z) \coloneqq 1+ \sum_{j\geq 1} \frac{f(p^j)}{p^{j/2}} z^j$ and $\alpha_p(x) \coloneqq \alpha_p /p^{ix}$. Observe for each fixed $x\in\cI$, the $(\alpha_p(x))_{p\leq N_0}$ are still iid Steinhaus independent of $(Z_p)_p$, and that 
    \[
    \mathcal{Z} = \bigcup_{p\leq N_0} \{x\in\cI : F_p(\alpha_p(x)) = 0\}\,. 
    \]
    Letting $\mathcal{F}$ denote the $\sigma$-algebra generated by $(Z_p)_p$, we compute via independence and Tonelli:
    \begin{multline*}
        \E\big[\mu_\infty^{\text{GMC}}(\mathcal{Z}) \given \mathcal{F} \big] 
        \leq \sum_{p\leq N_0} \E\big[ \mu_\infty^{\text{GMC}}(\{x\in\cI : F_p(\alpha_p(x)) = 0\}) \given \mathcal{F}\big]  \\
        = \sum_{p\leq N_0} \E\bigg[\int_{\cI} \ind{ F_p(\alpha_p(x)) = 0} \mu_\infty^{\text{GMC}}(\d x)  \given \mathcal{F}\bigg] 
        = \sum_{p\leq N_0} \int_{\cI} \P(F_p(\alpha_p(x)) =0) \, \mu_\infty^{\text{GMC}}(\d x)\,.
    \end{multline*}
    We now show $\P(F_p(\alpha_p(x)) =0)=0$ for all $x \in \cI$. Note that $F_p(z)$ converges absolutely for all $|z|\leq 1$ due to the last term in the twist condition \eqref{eq:TwistCondition}. As such, $F_p$ is analytic in $\mathbb{D}$, continuous on $\overline{\mathbb{D}}$, and not identically $0$ since $F_p(0) = 1$. It follows that the zero set of $F_p|_{\S^1}$ has vanishing Lebesgue measure (see \cite[Theorem~17.18]{rudin1987real} and the discussion below its proof). Since $\alpha_p(x)$ is Steinhaus for each $x$, we find $\P(F_p(\alpha_p(x)) =0)=0$. Thus, by countable additivity, $\mu_\infty^{\text{GMC}}(\{x\in \R : X(x) = 0\}) = 0$ almost surely. Mutual absolute continuity (in particular, $\mu_{\infty}^{\text{GMC}} \ll \mu_\infty$) follows immediately. 

    It remains to show $L^r$ convergence for $\theta \in (0,1)$ and $r\in[1,1/\theta)$. Fix $ h\in C(\cI)$. Choose $r< r' <r'' <1/\theta$, and let  $q''$  denote the conjugate Hölder exponent to $r''/r'$. 
    Recall $\|\cdot\|_\infty$ denotes the $L^{\infty}(\cI)$ norm. Then the triangle inequality, 
    Hölder's inequality, and $\|h\|_\infty <\infty$ imply
    \[
        \sup_{N,\ep\geq 0} \E|\mu_{N,\ep}(h)|^{r'} \leq \|h\|_\infty^{r'} \sup_{N,\ep \geq 0}  \E\Big[ \|X_{N,\ep}\|_\infty^{r'}  \, \mu_{N,\ep}^{\text{GMC}}(\cI)^{r'} \Big]  
        \ll \sup_{N,\ep} \E[\|X_{N,\ep}\|_\infty^{r' \cdot q''}]^{\frac{1}{q''}} \, \E[\mu_{N,\ep}^{\text{GMC}}(\cI)^{r''}]^{\frac{r'}{r''}} \,.
    \]
    The right-hand side is finite due to the above calculations involving $X_{N,\ep}$ and \cref{prop:gmc}. The same calculations show $\E|\mu_{\infty}(h)|^{r'}<\infty$, and thus $|\mu_{N,\ep}(h)|^r$ is uniformly integrable. We have already shown convergence in probability, so Vitali convergence theorem yields $L^r$ convergence.
\end{proof}

\begin{remark}[Rademacher RMFs] \label{rk:rademacher}
We expect our method to also treat the \emph{Rademacher} case, where $\alpha:\N\to \C$ is replaced by the Rademacher RMF $r:\N\to\R$, defined in \cref{subsubsec:numbertheory-motivation}, and the twists $f$ take values in $\R$. In fact, we write our preliminaries in Section~\ref{sec:prelim} such that the proofs of \cref{prop:coupling,prop:off-critical-error} hold upon replacing $\alpha$ with $r$ without \emph{any} modification. The proof of \cref{prop:error} actually simplifies a bit, since $r(n)$ vanishes when $n$ is not square-free and thus the $j\geq 2$ terms vanish in the definition \eqref{eq:TruncatedEP} of $D_N$. 
The main  addition required for \cref{thm:main} to hold is the universality of GMC for $2 \Re G_{N,\ep}$, i.e., the analog of \cref{prop:gmc}, which is less standard and more technical in the Rademacher case because $\Re G_{N,\ep}$ is deterministically an even function, so that its covariance function $C_{N,\ep}(x,y)$ has singularities at both $x=y$ and $x= -y$, and thus a doubled singularity at $x= y = 0$. We leave this  for future work.
\end{remark}

\section{Preliminaries: sums of independent random variables}
\label{sec:prelim}
We start by developing necessary preliminaries relating to Gaussian coupling, subgaussian estimates, and sums of random variables in Hilbert spaces.
Below, we let $(A_p)_p$ denote a sequence of iid random variables satisfying one of Case \caseS or Case \caseR below:
\begin{enumerate}
    \item[(S)] $(A_p)_p$ are either iid Steinhaus or iid $\cN^\C(0,1)$ random variables, where we recall $\cN^\C(0,1)$ denotes the law of $(X+iY)/\sqrt2$, where $X$ and $Y$ are iid standard Gaussians in $\R$. 
    \label{case:S}
    \item[(R)] $(A_p)_p$ are either iid Rademacher or iid standard Gaussians in $\R$.
    \label{case:R}
\end{enumerate}

\subsection{Gaussian coupling}\label{subsec:gaussian-coupling-tools}
We first record a Berry--Esseen theorem for sums of independent random vectors, taken from the book of Bhattacharya--Rao\footnote{The precise reference is Theorem 17.6 of \cite{bhattacharya-rao} specialized  with $s=3$, $X_j\coloneq X_j/\sqrt{n}$,  and $C$ to be the Euclidean ball of radius $r$ centered at $0$.} \cite{bhattacharya-rao} and first proved by Rotar \cite{rotar}.

\begin{lemma}[{\cite[Theorem~17.6]{bhattacharya-rao}}] \label{lem:berry-esseen}
Fix $k, n  \in \N$. Let $(X_i)_{i\in \N}$ denote a sequence of  independent random variables taking values in $\R^k$ such that
\[
  \E X_j = 0 \,,\, 1 \leq j \leq n\qquad \text{and} \qquad \sum_{j=1}^n \Cov(X_j) = \mathrm{Id}\,.
\]
Let $Z$ denote a standard Gaussian in $\R^k$. There exist constants $c,C>0$  such that if 
\[
   \sum_{j=1}^n \E|X_j|^3 \leq c\,,
\]
then for all $n\in \N$ and $r\geq 0$, 
\[
  \bigg|\P\bigg( \bigg|\sum_{j=1}^n X_j\bigg|  \leq r \bigg) - \P (|Z| \leq r )\bigg|\leq \frac{C}{(1+r)^3} \sum_{j=1}^n \E|X_j|^3\,.
\]
\end{lemma}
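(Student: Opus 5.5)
This lemma is quoted directly from \cite[Theorem~17.6]{bhattacharya-rao}, so in the paper it is simply invoked. If one wanted to derive it, I would specialize the general multivariate Berry--Esseen bound for convex sets. Apply Rotar's non-uniform estimate with $s=3$ and take the convex set to be the centered Euclidean ball $B_r=\{y\in\R^k:|y|\le r\}$.

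The route is the standard smoothing argument. Let $S=\sum_{j\le n}X_j$. I would first bound
\[
\sup_{f}\big|\E f(S)-\E f(Z)\big|
\]
over smooth approximations $f$ of $\one_{B_r}$ at scale $\eta$. This uses a Lindeberg replacement scheme: swap each $X_j$ for an independent Gaussian with covariance $\Cov(X_j)$, and Taylor expand to third order. The first- and second-order terms cancel because the means and covariances match. The third-order remainder is controlled by $\|D^3f\|\,\E|X_j|^3$.

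Next, I would pass from smooth $f$ to the indicator by paying the Gaussian mass of the $\eta$-neighbourhood of $\partial B_r$. This mass is $O(\eta(1+r)^{k-1}e^{-r^2/2})$, and balancing $\eta$ against $\sum_j\E|X_j|^3$ gives the uniform bound.

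The main obstacle is the non-uniform factor $(1+r)^{-3}$. For large $r$, I would instead compare each tail separately. On the Gaussian side, $\P(|Z|>r)$ decays super-polynomially. On the side of the sum, I would use a Fuk--Nagaev type truncation: split each $X_j$ at level $\asymp r$, control the truncated part by moment bounds, and control the large jumps by
\[
\sum_j\P(|X_j|>cr)\le (cr)^{-3}\sum_j\E|X_j|^3 .
\]
This yields $\P(|S|>r)\lesssim r^{-3}\sum_j\E|X_j|^3$ up to exponentially small terms, using that $\sum_j\E|X_j|^3\le c$ is small. The smallness hypothesis is precisely what lets the truncated sum be treated as subgaussian at scale $1$.

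Combining the two regimes, $r\lesssim 1$ from the uniform bound and $r\gg1$ from the tail bounds, gives the stated inequality. The constants $c,C$ depend only on $k$.
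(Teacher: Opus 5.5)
Citing \cite[Theorem~17.6]{bhattacharya-rao} with $s=3$ and the convex set taken to be the centered ball $B_r$ is exactly what the paper does. Its footnote also records the rescaling $X_j\mapsto X_j/\sqrt n$ needed to match Bhattacharya--Rao's normalization, and that citation alone settles the lemma. The derivation you sketch, however, would not prove the stated bound; write $\beta\coloneqq\sum_j\E|X_j|^3$.

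First, the Lindeberg step is not linear in $\beta$. A smoothing $f$ of $\one_{B_r}$ at scale $\eta$ has $\|D^3f\|\asymp\eta^{-3}$, so the replacement error is of order $\eta^{-3}\beta$, while the smoothing costs order $\eta$. Balancing gives $\eta\asymp\beta^{1/4}$ and a total error of order $\beta^{1/4}$ (i.e.\ $n^{-1/8}$ in the i.i.d.\ case), not $C\beta$. Reaching the sharp rate requires an extra ingredient. One option is Fourier smoothing as in \cite{bhattacharya-rao}: a kernel with compactly supported Fourier transform, combined with an expansion of the characteristic function of $S$ valid for $|t|\lesssim\beta^{-1}$, which permits $\eta\asymp\beta$. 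Another is Bentkus's induction on the number of summands inside the Lindeberg scheme, which avoids paying $\eta^{-3}$ for the third derivatives.

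Second, the way you combine the regimes leaves a hole. For $r\ge1$ you bound the difference by $\P(|S|>r)+\P(|Z|>r)$. But $\P(|Z|>r)\asymp r^{k-2}e^{-r^2/2}$ does not depend on $\beta$, and neither do the ``exponentially small terms'' in your Fuk--Nagaev estimate, which carry essentially all of $\P(|S|>r)$ when $S$ is nearly Gaussian. So this bound is at most $C\beta(1+r)^{-3}$ only for $r\gtrsim\sqrt{2\log(1/\beta)}$. On the range $1\ll r\lesssim\sqrt{\log(1/\beta)}$, a uniform estimate gives only $C\beta$, which is off by a factor as large as $(\log(1/\beta))^{3/2}$.

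What is missing is a non-uniform comparison of the two laws themselves, rather than of their tails separately. The standard route (as in \cite{bhattacharya-rao}) truncates the summands and bounds a weighted distance such as $\int(1+|x|^3)\,\d|(Q-\Phi)*K_\eta|(x)$, with $Q$ the law of $S$ and $\Phi$ the standard Gaussian law, through $t$-derivatives of the difference of characteristic functions. Applied to $\one_{B_r^c}$, which vanishes on $B_r$, this produces the factor $(1+r)^{-3}$ for all $r$ at once.

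Both features are used downstream. The proof of \cref{lem:block-coupling} integrates the bound over $r\in[0,\infty)$ via \eqref{eq:vallender}, and it needs the result to be linear in $T_B/V_B^{3/2}$.
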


Before proceeding, recall the Wasserstein-1 distance (a.k.a.\ Kantorovich-Rubinstein metric) between two random variables $X$ and $Y$ taking values in a Polish space $M$ with metric $d$:
\[
  W_1(X,Y) \coloneqq \inf_{\gamma \in \Gamma(X,Y)} \E_{\gamma} [d(X,Y)]\,,
\] 
where $\Gamma(X,Y)$ denotes the set of couplings between the laws of $X$ and $Y$, and the expectation is taken with $(X,Y) \sim \gamma$. 
When $X$ and $Y$ take values in $\R$, we have the formula of Vallender \cite{vallender}:
\begin{align}\label{eq:vallender}
  W_1(X,Y) = \int_{\R} |\P(X \leq r) - \P(Y \leq r)| \d r\,.
\end{align}

For conciseness, for a collection of complex scalars $\{\phi_p\}_p$ and a finite set of primes $B$, we define 
\[
\langle\phi A\rangle_{B}\coloneq \sum_{p \in B} \phi_p A_p \,, \qquad\qquad
  V_{B} \coloneqq  \sum_{p \in B} |\phi_p|^2\,, \qquad \qquad T_{B} \coloneqq \sum_{p\in B} |\phi_p|^3\,.
\]
\begin{lemma}\label{lem:block-coupling}
Consider $(A_p)_p$ in  Case \caseS or Case \caseR.
There exist constants $c, C>0$ such that for any finite set of primes $B$, and for any  $\{\phi_p\}_p \subset \C$ in Case \caseS or $\{\phi_p\}_p \subset \R$ in Case \caseR, there exists a coupling of $\langle\phi A\rangle_B$ with a  Gaussian $Z_{B}$ such that 
\[
  \mathrm{Cov}(Z_B) = \mathrm{Cov}\langle\phi A\rangle_B\,,
\]
and, letting $\Delta_{B} \coloneqq |\langle\phi A\rangle_B - Z_{B}|$, we have the first moment bound
\begin{align} \label{eq:block-coupling-expectation}
  \E \Delta_{B} \leq C \min\bigg( V_{B}^{1/2} \,,\, \frac{T_{B}}{V_{B}} \bigg)
\end{align}
and the tail bound
\begin{align}\label{eq:block-coupling-tail}
  \P(\Delta_{B} > u ) \leq Ce^{-c u^2/V_B}
\end{align}
for all $u\geq 0$, where we use the interpretation that $1/V_B \coloneqq 0$ if $V_B = 0$.

\begin{proof}
Since the case $V_B = 0$ is trivial, we assume throughout the proof $V_B>0$. Moreover, it is quick to show $\E\Delta_B \leq C V_B^{1/2}$: we simply take the independent coupling between $\langle\phi\alpha\rangle_B$ and $Z_B$ (they are realized as independent random variables on the same probability space). Then $\E\Delta_B \leq \E|\langle\phi A\rangle_B| + \E|Z_B| \leq C V_B^{1/2}$ by Jensen's inequality.

To prove \eqref{eq:block-coupling-expectation}, it  remains to show $\mathbb{E}\Delta_B \le CT_B/V_B$. 
The proofs are identical in Case \caseS and Case \caseR, so for concreteness we take $A_p = \alpha_p$ iid Steinhaus.
We  apply \cref{lem:berry-esseen} in dimension $k=2$, identifying $\C$ with $\R^2$.  Observe 
$\Cov \langle\phi\alpha\rangle_B = \tfrac12 V_B \id$. 
Letting $X_p \coloneq \sqrt{\tfrac{2}{V_B}} \phi_p \alpha_p$, we then have 
\[
\sum_{p \in B} \Cov(X_p) = \id \qquad\text{and}\qquad
\sum_{p\in B} \E |X_p|^3 = \big(\tfrac{2}{V_B}\big)^{3/2} \,T_B\,. 
\]
The third-moment condition of \cref{lem:berry-esseen} then becomes, for some constant $c>0$,
\begin{equation}\label{eq:third-mom-condition-proof}
T_B/V_B^{3/2}\leq c
\end{equation}

Suppose first that \eqref{eq:third-mom-condition-proof} is satisfied.  Then \cref{lem:berry-esseen} yields a Gaussian vector $Z\sim \cN(0,\id)$ with
\begin{align}
  \Big|\P\Big( \Big|\sqrt{\tfrac{2}{V_B}} \langle\phi\alpha\rangle_B\Big|  \leq r \Big) - \P (|Z| \leq r )\Big|\ll \frac{1}{(1+r)^3} \frac{T_B}{V_B^{3/2}}
\end{align}
for all $r\geq 0$.
Observe that the $W_1$ distance between two rotationally-symmetric random variables is bounded by the $W_1$ distance between their norms: the same uniform random variable can be used for both angles, taken independently of the norms. 
We may then use the formula \eqref{eq:vallender}
to compute
\begin{align*}
  W_1\Big(\sqrt{\tfrac{2}{V_B}} \langle\phi\alpha\rangle_B, Z\Big) \le W_1\Big(\Big|\sqrt{\tfrac{2}{V_B}} \langle\phi\alpha\rangle_B\Big|, |Z| \Big) = \int_0^{\infty} \Big| \P\Big(\Big|\sqrt{\tfrac{2}{V_B}} \langle\phi\alpha\rangle_B\Big| \leq r\Big) - \P(|Z| \leq r) \Big| \ll \frac{T_B}{V_B^{3/2}}\,.
\end{align*}
Scaling each random variable by $\sqrt{V_B/2}$ yields $W_1(\langle\phi\alpha\rangle_B, Z_B) \ll T_B/V_B$, where $Z_B = \sqrt{V_B/2} Z$ and therefore $\Cov \langle\phi\alpha\rangle_B = \Cov Z_B$. This completes the proof of \eqref{eq:block-coupling-expectation} when \eqref{eq:third-mom-condition-proof} holds.

If \eqref{eq:third-mom-condition-proof} does not hold, i.e.,
$T_B/V_B^{3/2} > c$,
then the independent coupling of $\langle\phi\alpha\rangle_B$ and $Z_B$ yields 
\[
  \E\Big|\sqrt{\tfrac{2}{V_B}}(\langle\phi\alpha\rangle_B - Z_B)\Big| \leq  \E\Big|\sqrt{\tfrac{2}{V_B}}\langle\phi\alpha\rangle_B\Big| + \E |Z| \leq C'
\]
for some constant $C'>0$. Choose $C$ such that $C' < cC < T_B/V_B^{3/2}$. It follows that 
\[
  E\Delta_B \leq C V_B^{1/2} \frac{T_B}{V_B^{3/2}} = C \frac{T_B}{V_B}\,.
\]

The tail bound \eqref{eq:block-coupling-tail} follows from the triangle inequality and Hoeffding's inequality. Choose any coupling such that \eqref{eq:block-coupling-expectation} holds, and write
\[
\P(\Delta_B >u) \leq \P(|\langle\phi\alpha\rangle_B| > u/2) + \P(|Z_B|> u/2)\,.
\]
Fix any finite mesh of angles $\Theta \subset \S^1$ such that $\cos(|\theta- \theta'|_{\S^1}) \geq 1/2$  for adjacent $\theta, \theta' \in \Theta$. Then for any $|x| >u$, there exists $\theta \in \Theta$ such that $\theta \cdot x = |x| \cos |\theta - \theta_x|_{\S^1} \geq |x|/2$, where $\theta_x \coloneqq x/|x|$. A union bound and Hoeffding's inequality  yield
\[
  \P(|\langle\phi\alpha\rangle_B| >u/2) \leq \sum_{\theta \in \Theta}\P(\theta  \cdot\langle\phi\alpha\rangle_B > u/4) \leq C e^{-cu^2/V_B}\,.
\]
The same bound on $\P(|Z_B|>u/2)$ follows from standard Gaussian tail estimates.
\end{proof}
\end{lemma}

\subsection{Subgaussian estimates}
We now record two estimates on subgaussian random variables.
\begin{lemma}\label{lem:small-mean-subgaussian}
Fix constants $C',c', K>0$. 
Let $X$ be a nonnegative random variable such that $\E X \leq r$ and, for some $\sigma \in (0,K]$, 
\[
\P(X>u ) \leq C' e^{- c' u^2/\sigma^2}\,.
\]
Then for each $\lambda>0$, there exist constants $C_\lambda, c_\lambda>0$ (depending also on $c',C'$, and $K$) such that
\[
  \E e^{\lambda X}-1 \leq C_\lambda r + C_\lambda e^{-c_\lambda/\sigma}\,.
\]
\begin{proof}
On the event $\{X \leq \sigma^{1/2}\}$, the mean value theorem yields
\[
  \E[(e^{\lambda X} - 1) \ind{X \leq \sigma^{1/2}}] \leq \E[\lambda e^{\lambda \sigma^{1/2}} X \ind{X \leq \sigma^{1/2}}] \leq C_\lambda r\,,
\]
where the $\sigma$ independence in the last bound comes from $\sigma\leq K$.
On  $\{X > \sigma^{1/2}\}$, we write
\begin{multline}\label{eq:layer-cake}
  \E [(e^{\lambda X}-1) \ind{X > \sigma^{1/2}}] - (e^{\lambda \sigma^{1/2}}-1)\P(X>\sigma^{1/2})
  \\
  = \int_{\sigma^{1/2}}^{\infty} \int_0^x \partial_u (e^{\lambda u}) \d u ~\d \P_X(x)  
  = \int_{\sigma^{1/2}}^{\infty} \P(X>u) \lambda e^{\lambda u} \d  u  \leq C_\lambda \int_{\sigma^{1/2}}^{\infty} e^{-c'\frac{u^2}{\sigma^2} + \lambda u} \d u\,.
\end{multline}
Now, for $u\geq \sigma^{1/2}$ and $\lambda>0$, there exists $\sigma_\lambda>0$ such that for all $\sigma \in (0,\sigma_\lambda]$, we have 
$
  \lambda u \leq \tfrac{c' u^2}{2\sigma^2}$.
For such $\sigma$, it follows from \eqref{eq:layer-cake} that
$
  \E [(e^{\lambda X}-1) \ind{X > \sigma^{1/2}}] \leq C_\lambda e^{-c_\lambda/\sigma}\,.
$
For $\sigma >\sigma_\lambda$, note that the right-hand side of \eqref{eq:layer-cake} can be bounded by a constant $C_\lambda'>0$ times the Laplace transform of a Gaussian with variance $K^2/2c'$
to get 
$\E [(e^{\lambda X}-1) \ind{X > \sigma^{1/2}}] \leq C_\lambda$,
possibly enlarging $C_\lambda$ from before.
We finish by multiplying by $e^{-c_\lambda/\sigma} e^{c_\lambda/\sigma_\lambda}$, which is larger than $1$. 
\end{proof}
\end{lemma}

\begin{lemma}\label{lem:sup-subgaussian-expmom}
Let $(b_n)_{n\in \N}\subset \R$ be a sequence  satisfying $b_n >0$ and $\sum_{n\in\N} b_n <\infty$. Consider a sequence of nonnegative random variables $U_n$ satisfying, for some $c,C>0$ and for all $u\geq 0$,
\[
  \P(U_n>u) \leq C b_n^{-2} e^{-cu^2/b_n^2}\,.
\]
Then $U\coloneqq \sup_{n\in\N} U_n$ satisfies $\E e^{\lambda U}<\infty$ for all $\lambda>0$.
\begin{proof}
We compute the tail of $U$ via union bound:
\[
  \P(U > u) \leq C \sum_{n\in\N} b_n^{-2} e^{-c u^2/b_n^2}\,.
\]
Observe that  $v^{3} e^{-cv^2/2} \leq C'$ for some constant $C'>0$ and all $v \geq 0$, and thus for all $u, n\geq 1$,
\[
  b_n^{-2} e^{-\frac{cu^2}{2b_n^2}}\leq C' u^{-3} b_n\,.
\]
Since $b_n \to 0$, we have $\eta \coloneqq \sup_{n\in\N} b_n <\infty$. It follows that, for $u\geq 1$,
\[
    \P(U>u) \ll u^{-3} e^{-\frac{cu^2}{2\eta^2}} \sum_{n\in \N} b_n \ll u^{-3} e^{-\frac{cu^2}{2\eta^2}} \,.
\]
This implies the exponential moment bound for any $\lambda >0$ via the layer-cake formula.
\end{proof}
\end{lemma}

\subsection{Sums of random variables in Hilbert spaces}\label{subsec:hilbert}
Let $\cI \subset \R$ be a fixed compact interval throughout this section.
We are interested in series of the form $F_B(x) \coloneqq \sum_{p \in B} A_p f_p(x)$, where the $(A_p)_p$ satisfy Case \caseS or \caseR and each $f_p \in C^{1}(\mathcal{I})$. In particular, we will need various bounds on $\|F_B\|_\infty$. The idea, already appearing in \cite{SW20}, is to apply the Sobolev embedding $\|g\|_\infty \leq C_{\cI} \|g\|_{H^1} \coloneqq C_{\cI}(\|g\|^2_2 + \|g'\|^2_2)^{1/2}$, where $H^1$ denotes $H^1(\cI;\C)$ or $H^1(\cI;\R)$ and $C_{\cI}>0$ is a constant.
The advantage of working in a Hilbert space is that the $A_p$ are independent with  $\E|A_p|^2 =1$, and thus
\[
  \E\|F_B\|_\infty^2 \ll \E\|F_B\|_{H^1}^2 =  \sum_{p \in B} \|f_p\|_{H^1}^2 \,.
\]
With this in mind, we record the main result of this section.

\begin{lemma}\label{lem:series-summability}
Let $\mathcal{B}$ be a collection of disjoint finite subsets of primes. 
Let $H^1$ denote $H^1(\cI; \mathbb{K})$ with $\mathbb{K} = \R$ or $\C$. Fix  $(f_p)_{p}\subset H^1$. If $\mathbb{K} = \R$, let  $(A_p)_{p}$ satisfy Case \caseR, and if $\mathbb{K} = \C$, let $(A_p)_p$ satisfy Case \caseS.
For each $B \in \mathcal{B}$, fix an enumeration $B = \{p_1, \dots, p_{|B|}\}$. 
If the block summability~condition  
\begin{align}\label{eq:block-summability-condition}
    \sum_{B \in \mathcal{B}} \Big(\sum_{p\in B} \|f_p\|_{H^1}^2 \Big)^{1/2}<\infty
\end{align}
holds, then for every $\lambda >0$,
\[
    \E\exp\bigg(\lambda \sum_{B \in \mathcal{B}} \max_{1\leq k\leq|B|} \Big\|\sum_{j=1}^k A_{p_j} f_{p_j} \Big\|_\infty \bigg)<\infty \,.
\]
In particular,  $\sum_{B\in\mathcal{B}} \sum_{p\in B} A_p f_p$ converges uniformly over $\cI$, almost surely. 
\end{lemma}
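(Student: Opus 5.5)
For each block $B\in\mathcal B$ set $\sigma_B\coloneqq(\sum_{p\in B}\|f_p\|_{H^1}^2)^{1/2}$ and $M_B\coloneqq\max_{1\le k\le|B|}\|\sum_{j\le k}A_{p_j}f_{p_j}\|_\infty$. The plan is to prove a subgaussian tail bound for each $M_B$ at scale $\sigma_B$, and then use independence across the disjoint blocks to factor the exponential moment. Because the blocks are disjoint, the $M_B$ are independent, so for any finite subfamily
\[
\E\exp\Big(\lambda\sum_B M_B\Big)=\prod_B \E e^{\lambda M_B}\le \exp\Big(\sum_B(\E e^{\lambda M_B}-1)\Big).
\]
Monotone convergence then extends this to all of $\mathcal B$. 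It therefore suffices to show $\sum_B(\E e^{\lambda M_B}-1)<\infty$.

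\textbf{Step 1 (first moment).} By the Sobolev embedding $\|g\|_\infty\le C_\cI\|g\|_{H^1}$, we have $M_B\le C_\cI\max_k\|\sum_{j\le k}A_{p_j}f_{p_j}\|_{H^1}$. The partial sums form an $H^1$-valued martingale in $k$, so Doob's $L^2$ maximal inequality together with orthogonality gives
\[
\E M_B\le (\E M_B^2)^{1/2}\ll \sigma_B .
\]

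\textbf{Step 2 (tail).} We aim for $\P(M_B>u)\le C'e^{-c'u^2/\sigma_B^2}$ with constants independent of $B$. I would use the standard Hilbert-space concentration for bounded or Gaussian coefficients: the norm $\|\sum_{j\le k}A_{p_j}f_{p_j}\|_{H^1}$ is a $1$-Lipschitz convex function, or one can apply Pinelis' martingale inequality in Hilbert space, or Kahane's contraction principle with subgaussian coefficients. This yields concentration of $\|\sum_{p\in B}A_pf_p\|_{H^1}$ around its mean at scale $\sigma_B$. The maximum over $k$ is handled either by L\'evy's maximal inequality, since the summands are symmetric, or directly by Pinelis' maximal version. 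Combined with Step 1 this gives the required Gaussian tail.

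\textbf{Step 3.} Since $\sigma_B\le\sum_{B'}\sigma_{B'}=:K<\infty$, Lemma~\ref{lem:small-mean-subgaussian} applies with $r\asymp\sigma_B$ and $\sigma=\sigma_B$, giving
\[
\E e^{\lambda M_B}-1\le C_\lambda\sigma_B+C_\lambda e^{-c_\lambda/\sigma_B}.
\]
The elementary bound $e^{-c/\sigma}\ll\sigma$ for $\sigma\le K$ shows the right-hand side is summable by \eqref{eq:block-summability-condition}. Blocks with $\sigma_B=0$ have $M_B=0$ almost surely and contribute nothing.

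\textbf{Conclusion and main obstacle.} Finiteness of the exponential moment gives $\sum_B M_B<\infty$ almost surely, hence $\sum_B\|\sum_{p\in B}A_pf_p\|_\infty<\infty$. The series of block sums therefore converges absolutely, and hence uniformly, in $C(\cI)$. Partial sums that stop partway through a block are controlled by the same $M_B$, so the full series converges uniformly as well. The hardest step is Step 2: obtaining a dimension-free Gaussian tail for the maximal $H^1$-norm uniformly over both the Steinhaus/Rademacher and Gaussian coefficient cases. I would invoke Pinelis' inequality for martingales in $2$-smooth Banach spaces, which covers all four cases at once.
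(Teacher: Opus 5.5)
Your proposal is correct and follows essentially the paper's route: a subgaussian tail for each $M_B$ at scale $\sigma_B$, then Lemma~\ref{lem:small-mean-subgaussian} to get $\E e^{\lambda M_B}-1\ll\sigma_B$, and finally independence across blocks to bound the exponential moment by $\exp(C_\lambda\sum_B\sigma_B)$. (The paper gets the tail in Corollary~\ref{cor:finite-block-H1} from the subgaussian Kahane--Khintchine inequality, Sobolev embedding and L\'evy's maximal inequality, and gets $\E M_B\ll\sigma_B$ by integrating that tail rather than via Doob.) One small caveat is that Pinelis' bounded-difference inequality applies directly only to Steinhaus/Rademacher coefficients, so for Gaussian coefficients rely on Gaussian concentration (the coefficients-to-norm map is $\sigma_B$-Lipschitz, not $1$-Lipschitz) or on Kahane--Khintchine, as the paper does.
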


\cref{lem:series-summability}  can be compared with \cite[Lemma~3.1]{SW20}: under the condition \eqref{eq:block-summability-condition}, we upgrade the convergence to ``blockwise''  absolute convergence. This is crucial for our arguments.
We prove the lemma by developing quantitative estimates on finite sums. We begin with
 the Kahane-Khintchine inequality, a classical result for sums of elements in Banach spaces with iid coefficients~\cite{KahaneBook,LedouxTalagrand}.

\begin{lemma}[Subgaussian Kahane--Khintchine inequality]
\label{lem:subgaussian-kahane-khintchine}
Let $H$ be a real or complex Hilbert space and fix $(f_i)_{i\in\N}\subset H$.
If $H$ is a real Hilbert space, let $(A_i)_{i\in\N}$ satisfy Case \caseR, and if $H$ is a complex Hilbert space, let $(A_i)_{i\in\N}$ satisfy \caseS.
For any $m\in\N$, 
define  
\[
S_m\coloneqq\sum_{k=1}^m A_k f_k \qquad \text{and} \qquad \sigma_m^2\coloneqq\E\|S_m\|_H^2 =  \sum_{k=1}^m \|f_k\|_H^2 \,.
\]
There exist $c,C>0$ such that for all $m \in \N$ such that  $\sigma_m >0$,
and for all $u\geq 0$,
\begin{align}\label{eq:subgaussian-kahane-khintchine-tail}
        \P(\|S_m\|_H>u)
        \le
        Ce^{-cu^2/\sigma_m^2}\,.
\end{align}
\begin{proof}
A version of the Kahane--Khintchine inequality stated explicitly for Rademacher, Steinhaus, and Gaussian coefficients can be found in \cite[Proposition~6.2.7]{kahane-khintchine-source}\footnote{They call Steinhaus random variables ``complex Rademacher'' random variables.}, which states the bound $\E e^{\delta\|S_m\|_H^2} \leq (1- 2e \delta \E\|S_m\|_H^2)^{-1}$ for all $\delta \in (0, (2e\sigma_m^2)^{-1})$. Equation~\eqref{eq:subgaussian-kahane-khintchine-tail} now follows from taking $\delta = 1/4e\sigma_m^2$ and the Markov inequality.
\end{proof}
\end{lemma}

\begin{corollary}
\label{cor:finite-block-H1}
Follow the same setup and notation as \Cref{lem:subgaussian-kahane-khintchine} with $H \coloneqq H^1(\mathcal{I};\mathbb{K})$ and $\mathbb{K} = \R$ or $\C$. 
There exist $c,C>0$ such that for all $m\in\N$ and $u\geq0$, we have
\begin{align}\label{eq:finite-block-maxtail}
        \P\Big(
        \max_{0\le k\le m}\|S_k\|_\infty>u
        \Big)
        \le
        Ce^{-cu^2/\sigma_m^2}\,.
\end{align}
where we use the convention that $1/\sigma_m^2 \coloneqq 0$ when $\sigma_m = 0$.
Moreover, for all $\lambda,K>0$, there exists $C_\lambda<\infty$ (also depending on $K$) such
that whenever $\sigma_m \in [0,K]$, we have
\begin{align}\label{eq:finite-block-expmoment}
        \E\big[\exp( \lambda \max_{0\leq k\leq m} \|S_k\|_\infty)\big]-1
        \leq
        C_\lambda\sigma_m .
\end{align}
\end{corollary}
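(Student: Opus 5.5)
The plan is to combine the Sobolev embedding $\|g\|_\infty \le C_{\cI}\|g\|_{H^1}$ with a maximal inequality for the partial sums $S_k$ in $H=H^1(\cI;\mathbb{K})$. The subgaussian tail of \cref{lem:subgaussian-kahane-khintchine} then transfers to $\max_{k\le m}\|S_k\|_\infty$. The exponential moment bound \eqref{eq:finite-block-expmoment} should follow from \cref{lem:small-mean-subgaussian}.

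First, I bound the maximum by the $H^1$ norm:
\[
\max_{0\le k\le m}\|S_k\|_\infty \le C_{\cI}\max_{0\le k\le m}\|S_k\|_{H}.
\]
So it suffices to prove $\P(\max_{k\le m}\|S_k\|_H>u)\le Ce^{-cu^2/\sigma_m^2}$. For this I use Lévy's maximal inequality for sums of independent symmetric random vectors in a Banach space. The coefficients $A_k$ are symmetric in both Case \caseS and Case \caseR, so $A_kf_k$ are independent and symmetric in $H$. Lévy's inequality gives
\[
\P\big(\max_{k\le m}\|S_k\|_H>u\big)\le 2\,\P(\|S_m\|_H>u).
\]
Applying \eqref{eq:subgaussian-kahane-khintchine-tail} to the right-hand side gives \eqref{eq:finite-block-maxtail} after adjusting constants for the factor $C_{\cI}$. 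If $\sigma_m=0$, then every $f_k=0$ and the statement is trivial. As an alternative if one prefers to avoid Lévy's inequality, $(\|S_k\|_H)_k$ is a nonnegative submartingale. Doob's inequality applied to $e^{\delta\|S_k\|_H^2}$ (a convex function of a submartingale) combined with the Kahane--Khintchine exponential-square bound cited in the proof of \cref{lem:subgaussian-kahane-khintchine} gives the same tail.

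For \eqref{eq:finite-block-expmoment}, set $X\coloneqq\max_{k\le m}\|S_k\|_\infty$. By the above tail bound and Cauchy--Schwarz,
\[
\E X\le C_{\cI}\,\E\max_k\|S_k\|_H\le C_{\cI}\,\big(\E\max_k\|S_k\|_H^2\big)^{1/2}\ll \sigma_m,
\]
where the last step uses Doob's $L^2$ inequality. Now apply \cref{lem:small-mean-subgaussian} with $r\asymp\sigma_m$ and $\sigma=\sigma_m\le K$. This gives $\E e^{\lambda X}-1\le C_\lambda\sigma_m+C_\lambda e^{-c_\lambda/\sigma_m}$. Finally, $e^{-c_\lambda/\sigma}\le C'_\lambda\sigma$ for all $\sigma>0$, which yields the bound $C_\lambda\sigma_m$.

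The main point needing care is the maximal inequality: verifying that the symmetry and independence hypotheses of Lévy's inequality hold in the Hilbert space setting, which they do. Everything else is direct.
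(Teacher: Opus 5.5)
Your proposal is correct and follows essentially the same route as the paper. You use the Sobolev embedding together with L\'evy's maximal inequality to transfer the Kahane--Khintchine tail to $\max_k\|S_k\|_\infty$, then apply \cref{lem:small-mean-subgaussian} with $r\asymp\sigma_m$ and the elementary bound $e^{-c_\lambda/\sigma}\ll\sigma$. The only cosmetic difference is that you bound $\E X\ll\sigma_m$ via Doob's $L^2$ inequality rather than by integrating the tail bound, which is equally valid.
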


\begin{proof}
Assume $\sigma_m>0$. The bound \eqref{eq:finite-block-maxtail} follows from \eqref{eq:subgaussian-kahane-khintchine-tail} by the Sobolev embedding $\|S_k\|_\infty \leq 2\|S_k\|_{H^1}$ and from L\'evy's maximal inequality \cite[Lemma~2.3.1]{KahaneBook}. 

We now prove \eqref{eq:finite-block-expmoment} using  \cref{lem:small-mean-subgaussian} with $X \coloneqq \max_{0\leq k \leq m} \|S_k\|_\infty$. 
Integrating the tail bound \eqref{eq:finite-block-maxtail} yields $\E X \leq C \sigma$. 
We may then apply \Cref{lem:small-mean-subgaussian} with
$r= C \sigma$, to get
\[
        \E e^{\lambda X}-1
        \leq
        C_\lambda\sigma
        +
        C_\lambda e^{-c_\lambda/\sigma}.
\]
Since $\sigma \in (0,K]$,  we have $e^{-c_\lambda/\sigma}\le C_\lambda\sigma$ upon possibly enlarging $C_\lambda$. This concludes the proof.
\end{proof}

\begin{proof}[Proof of~\cref{lem:series-summability}]
For each $B \in \mathcal{B}$, define
\[
    S_B \coloneqq \sum_{p\in B} A_p f_p \,, \qquad \sigma_B^2  \coloneqq \E \|S_B\|_{H^1}^2 = \sum_{p\in B}  \|f_p\|_{H^1}^2\,.
\]
The block summability condition \cref{eq:block-summability-condition} $\sum_{B\in \cB} \sigma_B <\infty$ implies that $K\coloneqq \sup_{B \in \cB} \sigma_B$ is finite. Thus, we may apply \eqref{eq:finite-block-expmoment}. This along with 
the independence of the $A_p$ yields 
\[
    \E\exp\bigg(\lambda \sum_{B \in \mathcal{B}} \max_{1\leq k\leq|B|} \Big\|\sum_{j=1}^k A_{p_j} f_{p_j} \Big\|_\infty \bigg)
    \leq  \prod_{B\in\cB} (1+ C_{\lambda} \sigma_B ) \leq \exp\bigg(C_\lambda \sum_{B\in\cB} \sigma_B\bigg) <\infty\,.
\]
This proves the lemma.
\end{proof}

\section{Gaussian approximation on the critical line: proof of \texorpdfstring{\cref{prop:coupling}}{Proposition 2.2}}\label{sec:gaussian-approximation-critical-line}
This section is dedicated to proving \cref{prop:coupling}. Recall the notation there. Throughout this section, we fix a compact interval $\cI \subset \R$. Recall from \cref{subsec:notation} that  we write 
$\|\cdot\|_{\infty} \coloneqq \|\cdot\|_{L^{\infty}(\cI)}$.
Recall also our twist condition \eqref{eq:TwistCondition}, which implies the following:
\begin{align}\label{eq:twist-condition-cube-term}
  \sum_{n =1}^{\infty} \bigg(\sum_{e^n< p \leq e^{n+1}} \frac{|f(p)|^3}{p^{3/2}} \bigg)^{1/3} < \infty\,.
\end{align}
This is the only part of \eqref{eq:TwistCondition} needed for the proof of \cref{prop:coupling}.

This section is organized as follows. 
In \cref{subsec:block-coupling-scheme-proof-thm}, we define our block coupling scheme. We then prove \cref{prop:coupling} conditional on three lemmas. In \cref{subsec:proof-of-lemmas}, we prove these lemmas.

\subsection{Block coupling scheme and proof of \texorpdfstring{\cref{prop:coupling}}{Proposition 2.2}}
\label{subsec:block-coupling-scheme-proof-thm}
We now introduce our block coupling scheme, followed by an overview of the proof of \cref{prop:coupling}.
We begin by introducing the \emph{scales} $I_n \coloneqq \{p \text{ prime} : p \in (e^n, e^{n+1}]\}$, $n\in \N$. It is also  convenient to introduce
  \[
  \phi_p \coloneq \frac{f(p)}{\sqrt{p}}\,, \qquad \qquad
    V_n \coloneqq \sum_{p \in I_n} |\phi_p|^2\,, \qquad \qquad T_n \coloneqq \sum_{p\in I_n} |\phi_p|^3\,.
  \]
For each scale $n$, we introduce the \emph{variance cutoff}
  \begin{align}\label{def:cutn}
    \cut_n \coloneqq T_n^{2/3}\,.
  \end{align}
Observe that our twist condition \eqref{eq:twist-condition-cube-term} implies
\begin{align}\label{eq:twist-condition-cutn}
    \sum_{n\in\N} \cut_n^{1/2} <\infty\,.
\end{align}
We then create ``blocks'' $B \subset I_n$ within each scale $n\in \N$ according to the following algorithm.
  \begin{enumerate}
    \item If $\cut_n = V_n = 0$, we move on to scale $n+1$. In light of this, we take the convention for the rest of this section that $\cut_n^{-1} = 0$ when $\cut_n = 0$.

    \item We enumerate the primes in $I_n$ in increasing order and partition them greedily into blocks $B$, where each block $B \subset I_n$ is completed once its \emph{block variance} 
    \[
        V_B \coloneqq \mathrm{Tr} \Cov \Big(\sum_{p\in B} \phi_p \alpha_p  \Big) = \sum_{p\in B} |\phi_p|^2
    \]
    first reaches or surpasses $\cut_n$.
    Note that for each $p\in I_n$, we have 
    \[
    |\phi_p|^2 = (|\phi_p|^3)^{2/3}\leq T_n^{2/3} = \big(\sum_{p\in I_n} |\phi_p|^3\big)^{2/3}\le \sum_{p\in I_n} |\phi_p|^2 = V_n \,,
    \]
    which implies $|\phi_p|^2\le \mathrm{cut}_n$ for all $p \in I_n$.     Therefore, each completed block $B\subset I_n$ satisfies 
    \begin{equation}\label{eq:complete-light-block-variance-bound}
      \cut_n \leq V_B \leq 2 \cut_n \,.
    \end{equation}
    There will be at most one \emph{incomplete} block $B$ in $I_n$, satisfying 
    \begin{equation} \label{eq:incomplete-light-block-variance-bound}
      V_B < \cut_n\,.
    \end{equation}
  \end{enumerate}
For all blocks $B$, \cref{lem:block-coupling} gives a coupling of $\langle\phi\alpha\rangle_B = \sum_{p\in B} \phi_p\alpha_p$ with a complex Gaussian $Z_B$ of the same covariance as $\langle\phi\alpha\rangle_B$. It follows that we may construct an iid sequence $(Z_p)_{p \text{ prime}} \sim \mathcal{N}^{\C}(0,1)$ on the same probability space such that 
\[
  Z_B = \sum_{p\in B} \phi_p Z_p\,,
\]
since $\Cov \alpha_p = \Cov Z_p$. This constructs the $(Z_p)_{p \text{ prime}}$ appearing in \cref{prop:coupling}. 

Recall we seek to bound $E_{N,0}(x) \coloneqq S_{N,0}(x) - G_{N,0}(x)$. 
Towards this, we define the quantities 
\begin{alignat*}{2}
  p_{\min}(B) &\coloneqq \min \{p \in B\}\,,&\qquad\qquad\qquad  p_{\max}(B) &\coloneqq \max \{p\in B\}\,, \\
  S_B(x) &\coloneqq \sum_{p \in B} \phi_p \alpha_p p^{-ix}\,,& 
  G_B(x) &\coloneqq \sum_{p \in B} \phi_p Z_p p^{-ix}\,, \\
  S_B^{\fro}(x) &\coloneqq p_{\min}(B)^{-ix} \, \langle \phi \alpha \rangle_B\,,& 
  G_B^{\fro}(x) &\coloneqq  p_{\min}(B)^{-ix} \, Z_B \,.
\end{alignat*}
Note that for each $N\in \N$, there is at most one block $B$ not fully contained in $[0,N]$, and therefore not all of its primes are included in the sums defining $S_{N,0}$ and $G_{N,0}$. We refer to this block as the \emph{residual block}. This small nuisance is handled separately by the following lemma.

\begin{lemma}\label{lem:residual-block}
Given a block of primes $B$, enumerate $B = \{p_1< \dots < p_{|B|}\}$. Define  the quantities 
\begin{align*}
  M_B^\alpha &\coloneqq \max_{1 \leq k \leq |B|} \bigg\|\sum_{j=1}^k \phi_{p_j} \alpha_{p_j} p_j^{-ix} \bigg\|_\infty\,,  \qquad M^\alpha \coloneqq \sup_{n\in\N} \max_{B \subset I_n} M_B^\alpha\,,
\end{align*}
and similarly define $M_B^Z$ and $M^Z$ by replacing $\alpha_{p_j}$ with $Z_{p_j}$ above. Then, for all $\lambda >0$, 
\begin{align}\label{eq:residual-block-expmoment}
  \E\big[e^{\lambda M^\alpha}\big] < \infty\,, \qquad \E\big[e^{\lambda M^Z}\big] <\infty \,,
\end{align}
and 
\begin{align}\label{eq:residual-block-0limit}
  \lim_{n\to\infty} \max_{B \subset I_n} M_B^\alpha = \lim_{n\to\infty} \max_{B \subset I_n} M_B^Z = 0\,.
\end{align}
\end{lemma}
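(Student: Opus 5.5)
The plan is to get both statements from the tail bound \eqref{eq:finite-block-maxtail} of \cref{cor:finite-block-H1}, fed into \cref{lem:sup-subgaussian-expmom}. The key point is that every block $B\subset I_n$ has small $H^1$-variance, and that variance is summable in $n$.

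\textbf{Step 1: variance of a block.} Fix a block $B\subset I_n$ and set $f_p(x)\coloneqq \phi_p p^{-ix}$ on $\cI$. Since $|p^{-ix}|=1$ and $\partial_x p^{-ix} = -i\log p\, p^{-ix}$, we have
\[
\|f_p\|_{H^1}^2 = |\cI|\,|\phi_p|^2(1+\log^2 p)\ll n^2|\phi_p|^2 \qquad \text{for } p\in I_n .
\]
By \eqref{eq:complete-light-block-variance-bound} and \eqref{eq:incomplete-light-block-variance-bound}, every block satisfies $V_B\le 2\cut_n$. Hence
\[
\sigma_B^2\coloneqq \sum_{p\in B}\|f_p\|_{H^1}^2 \ll n^2\cut_n .
\]

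\textbf{Step 2: tail bound for one scale.} Apply \eqref{eq:finite-block-maxtail} to each block, with $A_p=\alpha_p$ or $A_p=Z_p$; both are Case \caseS. This gives
\[
\P(M_B^\alpha>u)\le Ce^{-cu^2/(n^2\cut_n)} ,
\]
and the same bound for $M_B^Z$.

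Next I take a union bound over the blocks of $I_n$. The complete blocks have $V_B\ge\cut_n$, so there are at most $V_n/\cut_n+1$ blocks in $I_n$. I expect this count to be the main obstacle: $V_n/\cut_n$ is not obviously controlled by a power of $1/b_n$. To handle it, note that $V_n\ll 1$ uniformly in $n$, because $|f|^2\in\bP_\theta$ and partial summation give $V_n\approx\theta/n$. Also $\cut_n\le V_n$. Therefore the number of blocks is at most
\[
1+O(\cut_n^{-1}) \ll \cut_n^{-1}+1 .
\]

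\textbf{Step 3: choice of $b_n$.} I want $b_n$ summable with $b_n^2\gtrsim n^2\cut_n$. The natural first try is $b_n\asymp n\cut_n^{1/2}$, but \eqref{eq:twist-condition-cutn} only gives summability of $\cut_n^{1/2}$, not of $n\cut_n^{1/2}$. So I will choose
\[
b_n\coloneqq \max\big(n\cut_n^{1/2},\, n^{-2}\big) .
\]
If $n\cut_n^{1/2}$ is not summable this fails, and the fix is to redo the $H^1$ bound more carefully. Instead of $p^{-ix}$, use the recentred function $(p/p_{\min}(B))^{-ix}$, whose derivative carries the factor $\log(p/p_{\min}(B))\le 1$ rather than $\log p$. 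Multiplying by the unimodular factor $p_{\min}(B)^{-ix}$ does not change sup norms, so $M_B^\alpha$ is unaffected. With this, $\sigma_B^2\ll \cut_n$, and $b_n\coloneqq\max(\cut_n^{1/2},n^{-2})$ is summable by \eqref{eq:twist-condition-cutn}.

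With $b_n$ chosen this way, the block count satisfies $\cut_n^{-1}+1\ll b_n^{-2}n^4$. The polynomial factor $n^4$ is harmless. For $u\ge1$ it is absorbed by halving the exponent, since $e^{-cu^2/(2b_n^2)}\le e^{-c/(2b_n^2)}\ll n^{-4}$ whenever $b_n\to0$; alternatively one can absorb it into the constant in the proof of \cref{lem:sup-subgaussian-expmom}, which tolerates any polynomial prefactor in $b_n^{-1}$. This gives, for $U_n\coloneqq\max_{B\subset I_n}M_B^\alpha$,
\[
\P(U_n>u)\le Cb_n^{-2}e^{-c'u^2/b_n^2} .
\]
Then \cref{lem:sup-subgaussian-expmom} yields \eqref{eq:residual-block-expmoment} for $M^\alpha$, and the same argument gives it for $M^Z$.

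\textbf{Step 4: the limit.} For \eqref{eq:residual-block-0limit}, fix $\eta>0$. The bound above gives $\sum_n\P(U_n>\eta)<\infty$, because $b_n^{-2}e^{-c\eta^2/b_n^2}$ decays faster than any power of $b_n$ and $\sum_n b_n<\infty$. Borel--Cantelli then shows $U_n\le\eta$ eventually, almost surely. Since $\eta>0$ was arbitrary, the limit is $0$, and the same holds for $M^Z$.
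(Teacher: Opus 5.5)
Your overall route is the paper's. After you switch to the recentred functions $(p/p_{\min}(B))^{-ix}$, you get $\sigma_B^2\ll V_B\le 2\cut_n$. You then apply \eqref{eq:finite-block-maxtail} blockwise, union-bound over the $\ll\cut_n^{-1}$ blocks of $I_n$, and finish with \cref{lem:sup-subgaussian-expmom} and Borel--Cantelli.

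There is a real error in Step 3, where you put the union bound into the form \cref{lem:sup-subgaussian-expmom} requires. With $b_n=\max(\cut_n^{1/2},n^{-2})$ you claim $\cut_n^{-1}+1\ll b_n^{-2}n^4$. This is false whenever $\cut_n<n^{-4}$: the right-hand side is then $n^8$, while $\cut_n^{-1}$ can be exponentially large. It already fails in the model case $f\equiv1$. There $T_n\asymp e^{-n/2}/n$, so $\cut_n\asymp e^{-n/3}n^{-2/3}$, and $I_n$ contains $\asymp V_n/\cut_n\asymp e^{n/3}n^{-1/3}$ blocks. Neither absorption device rescues this, because on such scales the prefactor $\cut_n^{-1}$ is not polynomial in $b_n^{-1}=n^2$. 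The loss happens when you weaken the per-block tail $e^{-cu^2/\cut_n}$ to $e^{-cu^2/b_n^2}$ before the union bound. That throws away exactly the decay needed to pay for the number of blocks.

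The repair is easy, and there are two options.
\begin{itemize}
\item Take $b_n=\cut_n^{1/2}$, as the paper does. A scale with $\cut_n=0$ has $T_n=0$, hence $\phi_p=0$ on $I_n$ and no blocks, so $U_n=0$ there. Such $n$ can be dropped or given any positive summable $b_n$. On the remaining scales the union bound reads exactly $\P(U_n>u)\ll b_n^{-2}e^{-cu^2/b_n^2}$, and $\sum_n b_n<\infty$ by \eqref{eq:twist-condition-cutn}.
\item Keep any $b_n\ge\cut_n^{1/2}$, but retain the sharper per-block tail. Set $r\coloneqq b_n^2/\cut_n\ge1$. For $u\ge b_n$,
\[
\cut_n^{-1}e^{-cu^2/\cut_n}=b_n^{-2}\,r\,e^{-cru^2/b_n^2}\le b_n^{-2}\,re^{-cr/2}\,e^{-cu^2/(2b_n^2)}\ll b_n^{-2}e^{-cu^2/(2b_n^2)} .
\]
For $u<b_n$ the target bound exceeds $1$ once the constant is enlarged.
\end{itemize}
With either repair, \cref{lem:sup-subgaussian-expmom} applies and your Borel--Cantelli argument in Step 4 goes through unchanged.
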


Ignoring this residual term, our error function $E_{N,0}$ is then determined by 
the \emph{error from coupling}
\[
    \|S_B^\fro -G_B^\fro\|_\infty \,, \qquad p_{\max}(B) \leq N
\]
and the
\emph{error from freezing}
\[
    \|S_B - S_B^{\fro}\|_{\infty} + \|G_B - G_B^\fro\|_\infty\,,  \qquad p_{\max}(B) \leq N\,.
\]
We now give a short heuristic argument that both the coupling and the freezing error are summable over $B$. For these  heuristics, we ignore the existence of incomplete blocks. 
Observe that the error from coupling is equal to $|\langle\phi \alpha\rangle_B-Z_B|$, which, according to \cref{lem:block-coupling}, has expected size $O(T_B/V_B)$. 
Summing over $B \subset I_n$ and using $V_B \geq \cut_n$ for complete blocks, we find
\[
    \text{total error from coupling on scale $n$}  \approx \sum_{B\subset I_n} \frac{T_B}{V_B} \leq T_n/\cut_n\,.
\]
On the other hand, using the theory of sums of elements in Hilbert spaces with iid coefficients developed in \cref{subsec:hilbert}, we  bound the error from freezing on $B$ via \cref{cor:finite-block-H1} as
\begin{align*}
    \Big(\sum_{p\in B} \|\phi_p \big(p^{-ix} - p_{\min(B)}^{-ix}\big)\Big\|_{H^1}^2 \Big)^{1/2}  \ll V_B^{1/2} (\log p_{\max}(B) - \log p_{\min(B)})\,.
\end{align*}
Observe that the blocks $B \subset I_n$ being disjoint yields   $\sum_{B} (\log p_{\max(B)} - \log p_{\min(B)}) \leq \log e^{n+1} - \log e^n  = 1$. This along with  $V_B \leq 2 \cut_n$ for $B \subset I_n$  yields
\[
    \text{total error from freezing on scale $n$} \approx  \sum_{B\subset I_n} V_B^{1/2} (\log p_{\max}(B) - \log p_{\min(B)}) \ll \cut_n^{1/2}\,.
\]
Balancing these two errors gives a derivation of our choice $\cut_n\coloneqq T_n^{2/3}$, and \eqref{eq:twist-condition-cutn} leads to the above two errors being summable. 

We now formally state the lemmas that bound the errors from coupling and freezing. Then, we  prove \cref{prop:coupling}. 
\begin{lemma}[Error from coupling]\label{lem:frozen-error}
For any $\lambda >0$, 
\[
  \E\big[e^{\lambda \sum_B \|S_B^\fro -G_B^\fro\|_\infty} \big]<\infty\,.
\]
\end{lemma}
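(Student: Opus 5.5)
Since $\|S_B^\fro - G_B^\fro\|_\infty = |p_{\min}(B)^{-ix}|\,|\langle\phi\alpha\rangle_B - Z_B| = \Delta_B$ (the prefactor has modulus one for real $x$), the quantity to control is $\sum_B \Delta_B$, a sum over all blocks in all scales. The couplings of \cref{lem:block-coupling} are constructed independently for distinct blocks, since the blocks are disjoint sets of primes and we may realize each coupling on an independent auxiliary space. Hence the $\Delta_B$ are independent, and
\[
\E e^{\lambda\sum_B \Delta_B} = \prod_B \E e^{\lambda \Delta_B} \le \exp\Big(\sum_B (\E e^{\lambda\Delta_B}-1)\Big).
\]
It therefore suffices to show $\sum_B(\E e^{\lambda\Delta_B}-1)<\infty$.

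For a single block $B\subset I_n$, I would apply \cref{lem:small-mean-subgaussian} with $X=\Delta_B$, $\sigma^2 = V_B$ (up to the constant $c$ from \eqref{eq:block-coupling-tail}), and $r = C\min(V_B^{1/2}, T_B/V_B)$ from \eqref{eq:block-coupling-expectation}. The uniform bound $\sigma\le K$ holds because $V_B \le 2\cut_n$ and $\cut_n^{1/2}$ is summable, hence bounded. This gives
\[
\E e^{\lambda\Delta_B}-1 \le C_\lambda \min\big(V_B^{1/2}, T_B/V_B\big) + C_\lambda e^{-c_\lambda/V_B^{1/2}}.
\]
Since $e^{-c/\sigma}\le C\sigma^k$ for any $k$, the exponential term is at most $C V_B^{3/2}/\cut_n$ or similar. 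I would handle it as follows: for complete blocks, $V_B\ge \cut_n$ and $V_B\le 2\cut_n$, so $e^{-c/V_B^{1/2}} \le C V_B\, \cut_n^{1/2}/\cut_n \cdot$ (a harmless factor). A simpler route is to bound it by $C V_B^{3/2} \le C\, V_B \cut_n^{1/2}\cdot 2^{1/2}$ and sum.

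Now I sum over scale $n$. For complete blocks, $\sum_{B} T_B/V_B \le T_n/\cut_n = \cut_n^{1/2}$. For the single incomplete block, I use $V_B^{1/2}\le \cut_n^{1/2}$. For the exponential terms, $\sum_{B\subset I_n} V_B^{3/2}\le (2\cut_n)^{1/2}\sum_B V_B \le C\cut_n^{1/2} V_n$. Here I need $V_n$ bounded, which follows from $|f|^2\in\bP_\theta$: by the PNT-type condition \eqref{eq:PNT}, $\sum_{p\in I_n}|f(p)|^2/p \asymp \theta/n$, and this is bounded. Thus the total contribution from scale $n$ is $O(\cut_n^{1/2})$, which is summable by \eqref{eq:twist-condition-cutn}. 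The case $\cut_n=0$ contributes nothing.

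The main obstacle is bookkeeping the exponential remainder term of \cref{lem:small-mean-subgaussian} so that it is summable over the possibly many blocks in a scale. The per-block bound must be proportional to $V_B$ times something summable in $n$, rather than a constant per block. The bound $e^{-c/\sigma}\ll\sigma^3$ with $\sigma^2\asymp V_B$ resolves this. I would also verify that independence across blocks is legitimately built into the coupling construction; if not, I would state it explicitly as part of constructing $(Z_p)$.
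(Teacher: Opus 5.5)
Your proof is correct and follows the paper's argument: reduce via independence of the blocks to $\sum_B(\E e^{\lambda\Delta_B}-1)<\infty$, apply \cref{lem:small-mean-subgaussian} with $\sigma=V_B^{1/2}$, then use $T_B/V_B\le T_B/\cut_n$ for complete blocks and $V_B^{1/2}\le\cut_n^{1/2}$ for the single incomplete block. The only difference is cosmetic: you bound the remainder by $e^{-c_\lambda/V_B^{1/2}}\ll V_B^{3/2}\le(2\cut_n)^{1/2}V_B$ and sum using $\sum_{B\subset I_n}V_B=V_n\ll 1$, whereas the paper bounds it by $\cut_n^{-1}e^{-c_\lambda/\cut_n^{1/2}}$ via the block count from \cref{lem:number-light-blocks}.
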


\begin{lemma}[Error from freezing]\label{lem:error-from-freezing}
For any $\lambda >0$, 
\[
  \E\big[e^{\lambda \sum_B \|S_B - S_B^{\fro}\|_\infty} \big] <\infty\,, \qquad \E\big[e^{\lambda \sum_B \|G_B- G_B^{\fro}\|_\infty} \big] <\infty\,.
\]
\end{lemma}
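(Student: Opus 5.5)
The plan is to apply \cref{lem:series-summability} once for each of the two fields. Write
\[
S_B(x) - S_B^{\fro}(x) = \sum_{p\in B} \alpha_p f_p(x)\,, \qquad f_p(x) \coloneqq \phi_p\big(p^{-ix} - p_{\min}(B)^{-ix}\big)\,,
\]
and similarly $G_B - G_B^{\fro} = \sum_{p\in B} Z_p f_p$. The blocks $B$ over all scales $I_n$ form a collection $\mathcal{B}$ of disjoint finite sets of primes. The $\alpha_p$ are iid Steinhaus and the $Z_p$ are iid $\cN^\C(0,1)$, so both fall under Case \caseS. Since $\|S_B - S_B^{\fro}\|_\infty$ is bounded by the $k=|B|$ term of the inner maximum in \cref{lem:series-summability}, it suffices to verify the block summability condition \eqref{eq:block-summability-condition} for these $f_p$.

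The first step is a pointwise $H^1(\cI)$ estimate for $f_p$. Write $p^{-ix} - p_{\min}^{-ix} = p_{\min}^{-ix}(e^{-ix\log(p/p_{\min})} - 1)$. Its modulus is at most $|x|\log(p/p_{\min})$, so it is $\ll_{\cI} \log(p/p_{\min}(B)) \le \log p_{\max}(B) - \log p_{\min}(B) \eqqcolon \ell_B$. Its derivative is $-i\log p\, p^{-ix} + i \log p_{\min}\, p_{\min}^{-ix}$. Splitting this as $-i(\log p - \log p_{\min})p^{-ix} - i\log p_{\min}(p^{-ix}-p_{\min}^{-ix})$ bounds it by $\ell_B + \log p_{\min}\cdot |x|\ell_B \ll_{\cI} (n+1)\ell_B$ for $B\subset I_n$.

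This factor of $n$ is the main obstacle: a naive $H^1$ bound loses $\log p \approx n$, which would require $\sum_n n\,\cut_n^{1/2}<\infty$, and that is not implied by \eqref{eq:twist-condition-cutn}. To avoid it, I would first pull out the common unimodular factor $p_{\min}(B)^{-ix}$. Its modulus is $1$, so $\|S_B - S_B^{\fro}\|_\infty = \|\sum_{p\in B}\alpha_p \phi_p (e^{-ix\log(p/p_{\min})}-1)\|_\infty$. The same holds with $Z_p$ in place of $\alpha_p$, and the same holds for each partial sum in the maximum. So I apply \cref{lem:series-summability} instead to $\tilde f_p(x) \coloneqq \phi_p(e^{-ix\log(p/p_{\min}(B))} - 1)$. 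For these, $|\tilde f_p| \ll_{\cI} |\phi_p|\ell_B$ and $|\tilde f_p'| = |\phi_p|\log(p/p_{\min}) \le |\phi_p|\ell_B$, with no factor of $n$. Hence $\|\tilde f_p\|_{H^1} \ll_{\cI} |\phi_p|\,\ell_B$.

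Summing over $p\in B$ gives $(\sum_{p\in B}\|\tilde f_p\|_{H^1}^2)^{1/2} \ll V_B^{1/2}\ell_B \le (2\cut_n)^{1/2}\ell_B$, using \eqref{eq:complete-light-block-variance-bound} for complete blocks and \eqref{eq:incomplete-light-block-variance-bound} for the incomplete one. Since the blocks in $I_n$ are disjoint intervals of primes inside $(e^n,e^{n+1}]$, we have $\sum_{B\subset I_n}\ell_B \le 1$. Therefore
\[
\sum_{B\in\mathcal{B}}\Big(\sum_{p\in B}\|\tilde f_p\|_{H^1}^2\Big)^{1/2} \ll \sum_n \cut_n^{1/2} < \infty
\]
by \eqref{eq:twist-condition-cutn}. \cref{lem:series-summability} then yields both exponential moment bounds at once.
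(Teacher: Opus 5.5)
Your proposal is correct and follows the paper's proof essentially line for line. The paper also factors out the unimodular $p_{\min}(B)^{-ix}$ and applies \cref{lem:series-summability} to $f_p = \phi_p\big(e^{-ix(\log p - \log p_{\min}(B))}-1\big)$, with the bound $\|f_p\|_{H^1}\ll |\phi_p|(\log p_{\max}(B)-\log p_{\min}(B))$. It then concludes, as you do, via $V_B\le 2\cut_n$, disjointness of the blocks within $I_n$, and \eqref{eq:twist-condition-cutn}. Your explanation that pulling out the phase avoids a spurious factor of $n$ in the derivative correctly accounts for a step the paper performs without comment.
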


\begin{proof}[Proof of \cref{prop:coupling}]
Observe we can write
\[
  E_{N,0}(x) = R_N(x) + \sum_{B : p_{\max}(B) \leq N} (S_B(x) - S_B^\fro(x)) + (S_B^\fro(x) - G_B^\fro(x)) + (G_B^\fro(x) - G_B(x))\,,
\]
where $R_N$ denotes the contribution from the residual block. \cref{eq:couplingthm-exp-moment-N} then follows from H\"older's inequality and Lemmas~\ref{lem:residual-block}--\ref{lem:error-from-freezing}. 

To show the almost-sure convergence of $E_{N,0}$, observe that \cref{lem:error-from-freezing,lem:frozen-error} imply the  series
\[
E(x) \coloneqq \sum_{B} (S_B(x) - S_B^\fro(x)) + (S_B^\fro(x) - G_B^\fro(x)) + (G_B^\fro(x) - G_B(x))
\]
converges absolutely over $B$ and uniformly, almost surely. \cref{lem:residual-block} implies $\|R_N\|_\infty \to 0$ almost surely, and therefore $\|E_{N,0} - E\|_\infty \to 0$ almost surely. 
\end{proof}

\subsection{Proofs of Lemmas~\ref{lem:residual-block}--\ref{lem:error-from-freezing}}
\label{subsec:proof-of-lemmas}
We start by recording a bound on $V_n \coloneq \sum_{p \in I_n} |\phi_p|^2$ that allows us to bound the number of blocks in a given scale $I_n$.
\begin{lemma}\label{lem:number-light-blocks}
There exists a constant $C>0$ such that for all $n\in\N$, we have 
\begin{align}\label{eq:Vn-C/n-bound}
  V_n \leq C/n\,,
\end{align}
and the number of blocks in $I_n$ is bounded by $C \cdot \cut_n^{-1}$.
\begin{proof}
Let $F(x) \coloneqq \sum_{p\leq x} |f(p)|^2$.
Applying Riemann--Stieltjes integration by parts, we get:
\begin{align*}
    V_n &= \int_{(e^n,e^{n+1}]} \frac{1}{x} \d F(x)
    = e^{-(n+1)} F(e^{n+1}) - e^{-n} F(e^n) + \int_{e^n}^{e^{n+1}} \frac{F(x)}{x^2} \d x\,.
\end{align*}
Now, since $|f|^2 \in P_{\theta}$ (see \eqref{eq:PNT}), we have $|F(x)| \ll x/\log x$. The triangle inequality then implies
\[
    V_n \ll \frac{1}{n+1} + \frac{1}{n} + \int_{e^n}^{e^{n+1}} \frac{1}{x \log x}\d x \ll \frac{1}{n}\,.
\]
We can then bound the number of blocks in $I_n$ using this and \eqref{eq:complete-light-block-variance-bound}:
\[
1+ \frac{V_n}{\cut_n} \ll 1+ \frac{1}{n \cdot \cut_n} \ll  \cut_n^{-1}
\]
where the $+1$ comes from the possible existence of an incomplete block in $I_n$.
\end{proof}
\end{lemma}

\begin{proof}[Proof of \cref{lem:residual-block}]
We just prove the results for $M^\alpha$ since the proofs for $M^Z$ are identical. The idea is to use \eqref{eq:finite-block-maxtail} of \cref{cor:finite-block-H1} with $m=|B|$. Write 
\[
  M_{k}^\alpha \coloneqq \sum_{j=1}^k \phi_{p_j} \alpha_{p_j} e^{-ix \log(p_j/p_1)} \,, \qquad M_B^\alpha = \max_{1\leq k \leq |B|} \bigg\|\sum_{j=1}^k \phi_{p_j} \alpha_{p_j} e^{-ix \log(p_j/p_1)} \bigg\|_\infty\,.
\]
Since $B \subseteq I_n$, observe $\log(p_j/p_1) \leq 1$ for all $j \in[1,|B|]$. It follows that 
\[  
  \|e^{-ix \log(p_j/p_1)}\|_{H^1} \ll 1\,,
\]
and thus
\[
  \sigma^2\coloneqq \E\|M_{|B|}^\alpha\|_{H^1}^2 = \sum_{k=1}^{|B|} |\phi_{p_k}|^2\ \|e^{-ix \log(p_k/p_1)}\|_{H^1}^2 \ll \sum_{k=1}^{|B|} |\phi_{p_k}|^2 \ll V_B\,.
\]
Taking $S_k \coloneqq M_k^\alpha$ in \eqref{eq:finite-block-maxtail}, and recalling from \eqref{eq:complete-light-block-variance-bound} that $V_B \leq 2\cut_n$ for $B\subseteq I_n$, we find 
\[
  \P(M_B^\alpha>u) \ll  e^{-c_1 u^2/V_B} \ll e^{-c_2 u^2/ \cut_n} \,.
\]
Using the bound on the number of blocks in $I_n$ from \cref{lem:number-light-blocks} and a union bound, we have
\begin{align}\label{eq:residual-proof-tail}
  \P\Big(\max_{B\subset I_n} M_B^\alpha > u \Big) \ll \cut_n^{-1} e^{-c_3u^2/\cut_n}\,.
\end{align}
 \cref{lem:sup-subgaussian-expmom}  implies $\E[\exp(\lambda M^\alpha)] <\infty$ for all $\lambda \geq0$, proving \eqref{eq:residual-block-expmoment}.
Since $ \cut_n= T_n^{2/3}$ converges to 0 as $n\to\infty$, \eqref{eq:residual-block-0limit} follows from  Borel-Cantelli and taking $u = \delta$ for any $\delta>0$ in~\eqref{eq:residual-proof-tail}.
\end{proof}

\begin{proof}[Proof of \cref{lem:frozen-error}]
First, we have
\[
    \big\| S_B^{\fro} - G_B^{\fro} \big\|_\infty  = \Big | \sum_{p\in B} \phi_p (\alpha_p - Z_p) \Big| = \Delta_B\,,
\]
using the notation of \cref{lem:block-coupling}. 
Using the independence of the blocks, it  suffices to show
\begin{align}\label{eq:frozen-error-sufficient}
    \sum_{n\in\N} \sum_{B \subset I_n} \E\big[e^{\lambda \Delta_B} -1 \big] <\infty\,.
\end{align}
Now, \cref{lem:small-mean-subgaussian} with $\sigma \coloneq V_B^{1/2}$ and $K\coloneq \max_B V_B^{1/2}$ (which is finite because, for $B\subset I_n$, $V_B \leq V_n \to 0$ as $n\to\infty$ by \cref{eq:Vn-C/n-bound}) yields 
\begin{align}\label{eq:frozen-error-1}
    \E e^{\lambda \Delta_B} -1 \leq C_\lambda \E \Delta_B + C_\lambda e^{-c_\lambda/V_B^{1/2}}\,.
\end{align}
Consider a block $B$ in $I_n$.  
If $B$ is incomplete, we have $V_B < \cut_n$ from \eqref{eq:incomplete-light-block-variance-bound}, and thus \cref{lem:block-coupling} gives 
$\E\Delta_B \ll  V_B^{1/2} \ll \cut_n^{1/2}$. If $B$ is complete, we have $V_B \geq \cut_n$ from \eqref{eq:complete-light-block-variance-bound}, and thus \cref{lem:block-coupling} gives $\E\Delta_B \ll T_B/V_B \ll T_B/\cut_n$. Since there is at most one incomplete block per scale $I_n$, we get
\[
\sum_{n\in\N} \sum_{B\subset I_n } \E\Delta_B  \ll \sum_{n\in\N}\bigg( \cut_n^{1/2} + \sum_{\substack{B\subset I_n\\ B \text{ complete}}} \frac{T_B}{V_B} \bigg) \ll \sum_{n\in\N}\bigg(  \cut_n^{1/2} +  \frac{T_n}{\cut_n} \bigg)\ll \sum_{n\in\N} \cut_n^{1/2} <\infty\,,
\]
where in the last bound we used \eqref{eq:twist-condition-cutn}.
To treat the sum over the other term on the right-hand side of \eqref{eq:frozen-error-1}, we use \cref{lem:number-light-blocks}  as well as $V_B \leq 2 \cut_n$ (by \eqref{eq:complete-light-block-variance-bound} and \eqref{eq:incomplete-light-block-variance-bound}) to get 
\[
    \sum_{n\in\N} \sum_{B\subset I_n} e^{-c_\lambda/V_B^{1/2}} \ll  \sum_{n\in\N}  \cut_n^{-1} e^{-c_\lambda /\cut_n^{1/2}} \ll \sum_{n\in\N} \cut_n^{1/2} <\infty\,.
\]
Substituting the above two displays into \eqref{eq:frozen-error-1} yields \eqref{eq:frozen-error-sufficient}, completing the proof. \end{proof}

\begin{proof}[Proof of \cref{lem:error-from-freezing}]
We only prove the result for $S$, as the proof for $G$ is identical. 
Define
\[
    g_p(x) \coloneqq e^{-ix (\log p - \log p_{\min}(B))} -1
\]
for each $p \in B$. Observe that
\[
    S_B(x) - S_B^{\fro}(x) = \sum_{p\in B} \phi_p \alpha_p \Big( e^{-ix \log p} - e^{-ix\log {p_{\min}(B)}}\Big) = e^{-ix \log p_{\min}(B)} \sum_{p\in B} \phi_p \alpha_p g_p(x) \,,
\]
so that $\|S_B -S_B^\fro\|_\infty = \|\sum_{p\in B} \phi_p \alpha_p g_p\|_\infty$. Therefore, taking $A_p \coloneqq \alpha_p$ and $f_p \coloneqq \phi_p g_p$ in \cref{lem:series-summability}, we see that \cref{lem:error-from-freezing} follows once we verify the block summability condition \eqref{eq:block-summability-condition}. Towards this, observe
\[
    \|f_p\|_{H^1}^2 \ll  ( \log p - \log p_{\min}(B))^2|\phi_p|^2 \leq ( \log p_{\max}(B) - \log p_{\min}(B))^2|\phi_p|^2\,.
\]
We then compute
\begin{multline*}
    \sum_{n\in\N} \sum_{B \subset I_n} \Big(\sum_{p \in B} \|f_p\|_{H^1}^2\Big)^{1/2}  \\
    \ll \sum_{n\in\N} \sum_{B\subset I_n} ( \log p_{\max}(B) - \log p_{\min}(B)) V_B^{1/2}  \ll \sum_{n\in\N} \cut_n^{1/2}  \sum_{B\subset I_n} ( \log p_{\max}(B) - \log p_{\min}(B))\,,
\end{multline*}
where the last inequality follows from $V_B\leq 2\cut_n$ (\eqref{eq:complete-light-block-variance-bound} and \eqref{eq:incomplete-light-block-variance-bound}). Now, since  blocks are disjoint, 
\[
\sum_{B \subset I_n } (\log p_{\max}(B) - \log p_{\min}(B)) \leq \log e^{n+1} - \log e^{n} = 1\,.
\]
The above two displays along with the assumption $\sum_{n\in\N} \cut_n^{1/2}<\infty$ together verify  \eqref{eq:block-summability-condition}. 
\end{proof}

\section{Controlling the off-critical error: proof of \texorpdfstring{\cref{prop:off-critical-error}}{Proposition 2.3}} \label{sec:off-critical-errors}
We now give the proof of \cref{prop:off-critical-error}.
\begin{proof}[Proof of \cref{prop:off-critical-error}]
Below, we consider $N \in \N$ and $\ep \geq 0$. Define 
$\index(N) \coloneqq \max \{k \in \N : p_k \leq N\}$.
We then write for any $N\in\N$ and $\ep \geq 0$ the following:
\[
    E_{N,\ep}(x) \coloneqq \sum_{m=1}^{\index(N)} \phi_{p_m} (\alpha_{p_m} - Z_{p_m}) p_m^{-ix} \cdot p_m^{-\ep}\,.
\]
Applying partial summation to  the above display yields:
\begin{align}\label{eq:error-partialsummation}
    E_{N,\ep}(x) &= p_{\index(N)}^{-\ep} \sum_{m=1}^{\index(N)} \phi_{p_m} (\alpha_{p_m} - Z_{p_m}) p_m^{-ix} + \sum_{m=1}^{\index(N)-1} (p_m^{-\ep} - p_{m+1}^{-\ep}) \left( \sum_{j=1}^m \phi_{p_j} (\alpha_{p_j} - Z_{p_j}) {p_j}^{-ix} \right) \nonumber \\
    &= p_{\index(N)}^{-\ep}  E_{N,0}(x)  + \sum_{m=1}^{\index(N)-1} (p_m^{-\ep} - p_{m+1}^{-\ep}) E_{p_m,0}(x)\,.
\end{align}
Observe that the coefficients of the error functions in the above display satisfy the following identity:
\begin{align}\label{eq:summable-coefficients-fixeseverything}
    p_{\index(N)}^{-\ep} + \sum_{m=1}^{\index(N)-1} (p_m^{-\ep} - p_{m+1}^{-\ep})  = p_1^{-\ep}\,.
\end{align}
Recalling that $E$ denotes the almost-sure limit of $E_{N,0}$ from \cref{prop:coupling}, we compute
\begin{align*}
    E_{N,\ep}(x) - E(x) &= p_{\index(N)}^{-\ep}  \big(E_{N,0}(x) - E(x) \big)- \big(1-p_1^{-\ep}\big)E(x) \\
    &\qquad+ \sum_{m=1}^{\index(N)-1} (p_m^{-\ep} - p_{m+1}^{-\ep}) \cdot  (E_{p_m,0}(x)-E(x)) \\
    &= o(1) + \sum_{m=1}^{\index(N)-1} (p_m^{-\ep} - p_{m+1}^{-\ep}) \cdot  (E_{p_m,0}(x)-E(x))\,,
\end{align*}
where the $o(1)$  tends to $0$ uniformly over $x$ almost surely as $\min(N, 1/\ep)\to\infty$.
Fix $\delta >0$. By \cref{prop:coupling}, for almost every $\omega$ in the probability space $\Omega$, there exists $ K(\delta, \omega)<\infty$ such that 
\[
    \sup_{m\geq K} \|E_{p_m,0}-E\|_\infty < \delta\,.
\]
It follows that for every $\delta>0$ and for almost every $\omega$, 
\begin{align*}
    \|E_{N,\ep} - E\|_\infty &\leq o(1) + \bigg\|\sum_{m=1}^{K-1} (p_m^{-\ep} - p_{m+1}^{-\ep})(E_{p_m,0}-E)\bigg\|_\infty + \bigg\| \sum_{m=K}^{\index(N) -1} (p_m^{-\ep} - p_{m+1}^{-\ep}) (E_{p_m,0}-E)\bigg\|_\infty \\
    &\leq o(1)  +  \delta \sum_{m=K}^{\index(N) -1}(p_m^{-\ep} - p_{m+1}^{-\ep})  \\
    &\leq o(1) + \delta\,,
\end{align*}
where in the second line, we used the fact that the sum from $m=1$ to $m=K-1$ has no $N$ dependence and vanishes as $\ep \to 0$. This yields 
\[
    \lim_{\min(N, 1/\ep) \to\infty} E_{N,\ep}(x) = E(x) \qquad \text{uniformly in $x \in \cI$, almost surely.}
\]

Finiteness of the exponential moments of $E_{N,\ep}$ follows from the simple observation that the triangle inequality, \eqref{eq:error-partialsummation}, and \eqref{eq:summable-coefficients-fixeseverything} together imply that 
$\|E_{N,\ep}\|_\infty $ is bounded by a weighted average of $\{\|E_{p_m,0}\|_\infty\}_{m=1}^{\index(N)}$, where the sum of the  weights is bounded by $p_1^{-\ep} \leq 1$. In particular, we have the following for any $N\in\N \cup \{\infty\}$ and $\ep\geq 0$:
\begin{align}\label{eq:partial-summation-trick}
    \|E_{N,\ep}\|_\infty \leq \max_{1 \leq m \leq \index(N)} \|E_{p_m,0}\|_\infty \leq \sup_{N\in \N} \|E_{N,0}\|_\infty\,.
\end{align}
Taking suprema over $N,\ep\geq 0$  in any order yields 
\[
\E\big[e^{\lambda \sup_{N \in \N, \ep \geq 0} \|E_{N,\ep}\|_\infty} \big] \leq \E \big[ e^{\lambda \sup_{N\in \N} \|E_{N,0}\|_\infty} \big]<\infty\,,
\]
where the final relation follows from \eqref{eq:couplingthm-exp-moment-N}. This completes the proof of \cref{prop:off-critical-error}. 
\end{proof}

\section{Proof of \texorpdfstring{\Cref{prop:error}}{Proposition 2.1}}\label{sec:error}
This section is dedicated to the proof of \cref{prop:error}. We fix a compact interval $\cI\subset\R$ throughout, and recall from \cref{subsec:notation} that $\|\cdot\|_\infty \coloneqq \|\cdot\|_{L^{\infty}(\cI)}$. 
Recall also $I_n \coloneqq (e^n, e^{n+1}]$.

In what follows, we make repeated use of the following consequence of \cref{lem:series-summability}.
\begin{corollary}\label{cor:series-summability-dos}
Let $(A_p)_p$ be as in Case \caseS or Case \caseR, and fix a collection of scalars $(c_p)_p \subset \C$ in Case \caseS or $(c_p)_p \subset \R$ in Case \caseR satisfying
\begin{align}\label{eq:block-summability-condition-dos}
    \sum_{n\in\N} \Big( \sum_{p\in I_n} |c_p|^2\Big)^{1/2} <\infty\,.
\end{align}
Fix $j\geq 1$.  Then the series 
\[
    F_{N,\ep}(x) \coloneqq \sum_{p \leq N} \frac{c_p}{p^{j\ep}}  A_p p^{-ijx}
\]
satisfies
\begin{align}\label{eq:summability-dos-expmoment}
    \E \exp \, (\lambda \sup_{N,\ep \geq 0 } \|F_{N,\ep}\|_\infty)<\infty
\end{align}
for all $\lambda \geq 0$. Additionally, $F_{N,\ep}$
converges uniformly over $\cI$ almost surely as $\min(N,1/\ep)\to\infty$. 
\end{corollary}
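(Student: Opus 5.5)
The plan is to treat $\ep=0$ first using \cref{lem:series-summability}, and then pass to general $\ep\ge0$ with the partial summation trick from the proof of \cref{prop:off-critical-error}.

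\emph{The case $\ep=0$.} Take the blocks $\mathcal{B}=\{I_n\}_{n\in\N}$, with each $I_n$ enumerated in increasing order, and set $f_p(x)\coloneqq c_p p^{-ijx}$. The derivative of $f_p$ is $-ij\log p\, f_p$, so its $H^1$ norm grows like $\log p\approx n$. This growth would destroy the block summability condition \eqref{eq:block-summability-condition}, so the phase must be removed first, as in the proof of \cref{lem:residual-block}. On the block $I_n$ write
\[
p^{-ijx}=e^{-ijnx}\,e^{-ijx(\log p-n)}, \qquad \log p-n\in(0,1].
\]
The modulus-one prefactor $e^{-ijnx}$ does not change sup norms of partial sums within a block. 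The functions $g_p(x)\coloneqq c_p e^{-ijx(\log p-n)}$ satisfy $\|g_p\|_{H^1}\ll_{j,\cI}|c_p|$. Condition \eqref{eq:block-summability-condition-dos} therefore gives \eqref{eq:block-summability-condition} for the family $(g_p)_p$. \cref{lem:series-summability} then yields
\[
\E\exp\bigg(\lambda\sum_n \max_{k\le |I_n|}\Big\|\sum_{i\le k} A_{p_i}c_{p_i}p_i^{-ijx}\Big\|_\infty\bigg)<\infty
\]
for every $\lambda>0$. Every partial sum $F_{N,0}$ splits into full blocks plus one partial block. Hence
\[
\sup_N\|F_{N,0}\|_\infty\le \sum_n \max_k\|\cdot\|_\infty,
\]
which gives the exponential moment bound at $\ep=0$. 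The same bound shows that the partial-block contributions tend to zero almost surely and that the series over full blocks converges absolutely. Consequently $F_{N,0}$ converges uniformly almost surely to some $F$.

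\emph{General $\ep\ge0$.} Repeat the partial summation of \eqref{eq:error-partialsummation} with weights $p_m^{-j\ep}$:
\[
F_{N,\ep}=p_{\index(N)}^{-j\ep}F_{N,0}+\sum_{m<\index(N)}\big(p_m^{-j\ep}-p_{m+1}^{-j\ep}\big)F_{p_m,0}.
\]
The weights are nonnegative and sum to $p_1^{-j\ep}\le1$. This gives
\[
\sup_{N,\ep}\|F_{N,\ep}\|_\infty\le\sup_N\|F_{N,0}\|_\infty,
\]
and hence \eqref{eq:summability-dos-expmoment}.

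For convergence, argue as in the proof of \cref{prop:off-critical-error}. Write $F_{N,\ep}-F$ as the weighted average of the differences $F_{p_m,0}-F$, plus the term $-(1-p_1^{-j\ep})F$, which vanishes as $\ep\to0$. Fix $\delta>0$ and split the average at an index $K(\delta,\omega)$ beyond which $\|F_{p_m,0}-F\|_\infty<\delta$. The finitely many early terms carry weights that tend to $0$ as $\ep\to0$. When $\ep$ is not small, $N\to\infty$ instead, and the first term $p^{-j\ep}_{\index(N)}(F_{N,0}-F)\to0$ anyway. Together these show $F_{N,\ep}\to F$ uniformly on $\cI$ almost surely as $\min(N,1/\ep)\to\infty$.

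\emph{Main obstacle.} The only nonroutine point is the $H^1$ normalization in the $\ep=0$ step. Factoring out the block-dependent phase $e^{-ijnx}$ is what keeps $\|g_p\|_{H^1}\ll|c_p|$ uniformly in $p$, and this phase is harmless because it is common to the whole block $I_n$ and has modulus one.
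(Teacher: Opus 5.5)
Your proof is correct and follows essentially the paper's route: the same blocks $I_n$, the same removal of the common phase $e^{-ijnx}$ so that $\|g_p\|_{H^1}\ll|c_p|$, an application of \cref{lem:series-summability}, and then the partial summation trick. The only difference is where you sum by parts. The paper does it inside each block, obtaining $\max_{P\in I_n}\|F_{P,n,\ep}\|_\infty\le\max_{P\in I_n}\|F_{P,n,0}\|_\infty$, so convergence follows by dominated convergence over $n$. You sum by parts globally and rerun the $\delta$--$K$ argument of \cref{prop:off-critical-error}. That works because $\min(N,1/\ep)\to\infty$ forces both $N\to\infty$ and $\ep\to0$, so the case ``$\ep$ not small'' that you mention never actually arises.
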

\begin{proof}
We start by proving the corollary for $F_{N,0}$. Consider the block sum
\[
    F_{P,n,\ep}(x) \coloneqq \sum_{p \in (e^n, P)} \frac{c_p}{p^{j\ep}}A_p  p^{-ijx} = e^{-ij nx} \sum_{p \in (e^n, P)} A_p f_{p,\ep}(x) \,, \ \  \quad f_{p,\ep}(x) \coloneqq \frac{c_p}{p^{j\ep}} e^{-ij (\log p - n)x}\,.
\]
For $p \in I_n$, we have $p \leq e^{n+1}$, and thus $\|f_{p,0}\|_{H^1}^2 \ll |c_p|^2 (1+\log p-n)^2 \ll |c_p|^2$. Applying \cref{lem:series-summability} with $f_p \coloneqq f_{p,0}$, we find the block summability condition \eqref{eq:block-summability-condition} is satisfied, and thus 
\[
    \E \exp\Big( \lambda \sum_{n \in \N} \max_{P \in I_n} \|F_{P,n,0}\|_\infty\Big) <\infty\,.
\]
Now, observe the same  partial summation trick leading to \eqref{eq:partial-summation-trick} implies 
\[
    \max_{P \in I_n} \|F_{P,n,\ep}\|_\infty \leq \max_{P \in I_n} \|F_{P,n,0}\|_\infty\,.
\]
The above two displays imply \eqref{eq:summability-dos-expmoment} as well as convergence of $F_{N,\varepsilon}$, concluding the proof.
\end{proof}

\begin{proof}[Proof of \cref{prop:error}]
Recall the definition of $D_N$ from \eqref{eq:TruncatedEP}.
Observe  the twist condition \eqref{eq:TwistCondition} implies that the quantities $|f(p)|/\sqrt{p}$, $|f(p^2)|/p$, and $\sum_{j\geq 3} |f(p^j)|/p^{j/2}$ all tend to $0$  as $p \to \infty$. With this in mind, we fix $N_0 \in \N$ large such that
\begin{align}\label{def:N0}
     \sup_{p > N_0} \sup_{\ep \geq 0} \Bigg\{\sum_{j\geq 1}  \Bigg| \frac{\alpha_p^j f(p^j)}{p^{j(\frac12+\ep+ix)}}\Bigg|  \ \vee \ \sum_{j\geq 1} \Bigg| \frac{\alpha_p^j f(p)^j}{p^{j(\frac12+\ep+ix)}}\Bigg| \Bigg\} < 1 \,,
\end{align}
where we write $s \vee t \coloneqq \max(s,t)$.
The above choice of $N_0$ makes $1+g_p(x)$ bounded away from $0$ for all $x\in \R$, for various functions $g_p: \R\to\C$, so that the Taylor expansion of $\log (1+g_p(x))$ converges. 
In particular, define 
\[
    \cC_{N,\varepsilon}(x)\coloneqq    \prod_{N_0 < p\leq N}\bigg(1+\sum_{j\geq1}\frac{\alpha_p^j f(p^j)}{p^{j(\frac12+\varepsilon+ix)}}\bigg)\bigg(1-\frac{\alpha_pf(p)}{p^{\frac12+\varepsilon+ix}}\bigg)\,, \qquad
    S_{N,\varepsilon}^{j\geq 2}(x) \coloneqq \sum_{N_0 < p\leq N} \sum_{j\geq 2}\frac{f(p)^j}{jp^{\frac{j}{2}+j\varepsilon}}\alpha_p^j \, p^{-ijx}\,.
\]
We then have by geometric series expansion and Taylor expansion of the logarithm:
\begin{align*}
    D_N(\alpha f, \tfrac12 + \ep + ix) 
    &=
    D_{N_0}(\alpha f, \tfrac12+\ep+ix) \
    \cC_{N,\ep}(x)  
    \prod_{N_0 < p \leq N} \bigg(1- \frac{\alpha_p f(p)}{p^{\frac12+\ep+ix}}\bigg)^{-1}
    \\
    &=D_{N_0}(\alpha f, \tfrac12+\ep+ix) \
    \cC_{N,\ep}(x) \
    e^{S_{N,\ep}(x) + 
    S_{N,\ep}^{\geq 2}(x)- S_{N_0,\ep}(x)} \,.
\end{align*}
Rearranging, we obtain a decomposition for $E_{N,\ep}^{(1)}(x) := |D_N(\alpha f, \frac12+\ep+ix)|^2/\exp(2\Re S_{N,\ep}(x))$:
\[
    E_{N,\ep}^{(1)}(x) = \big| D_{N_0} (\alpha f, \tfrac12+\ep+ix) \big|^2  \cdot 
    e^{-2\Re S_{N_0, \ep}(x)}   \cdot 
    |\cC_{N,\ep}(x)|^2  \cdot 
    e^{2\Re S_{N,\ep}^{\geq 2}(x)}\,.
\]
It is clear the first two terms on the right-hand side above converge uniformly as $\min(N,1/\ep)\to\infty$ to $|D_{N_0} (\alpha f, \tfrac12+ix)|^2 \exp(-2\Re S_{N_0, 0}(x))$, having no $N$ dependence and being continuous in $\ep$.
Since $\exp(-2\Re S_{N_0, 0}(x))$ is non-vanishing, the proposition will be proved once we show
\begin{align}
    \log | \cC_{N,\ep}(x)|^2 + 2\Re S_{N,\ep}^{\geq 2}(x) \qquad \text{converges almost surely, uniformly over $x\in \cI$}
\end{align}
(noting that $\log | \cC_{N,\ep}(x)|^2$ is a continuous function for all $N,\ep \geq 0$ by \eqref{def:N0}) and, for every $\lambda >0$,
    \begin{align}
        \mathbb{E} \big[\sup_{N,\varepsilon\ge 0} \|\mathcal{C}_{N,\varepsilon}\|_{\infty}^{2\lambda}\big]&<\infty\,, \label{eq:CNExpo}\\
        \mathbb{E}\big[\exp\big(\lambda \sup_{N, \varepsilon\ge 0} \|\Re S_{N,\varepsilon}^{j\ge 2}\|_{\infty}\big)\big]&<\infty\,.\label{eq:SnExp}
    \end{align}

We proceed by analyzing $\cC_{N,\ep}$.  We have
\begin{align*}
    \cC_{N,\varepsilon}(x) \ = \ \prod_{N_0 < p\leq N}\bigg(1+\sum_{j\geq2}\frac{f(p^j)\alpha_p^j-f(p^{j-1})f(p)\alpha_p^j}{p^{j(\frac12+\ep+ix)}}\bigg) \ = \ \prod_{N_0 < p\leq N}\big (1+a_{p,\varepsilon}(x)+b_{p,\varepsilon}(x)\big)\,,
\end{align*}
where $a_{p,\ep}$ and $b_{p,\ep}$ are the $j=2$ and $j\geq3$ contributions, respectively:
\[
    a_{p,\ep}(x) \coloneqq \frac{f(p^2)-f(p)^2}{p^{1+2\ep}}\, \alpha_p^2\, p^{-2ix} \qquad \text{and} \qquad
    b_{p,\ep}(x) \coloneqq \sum_{j\geq3}\frac{f(p^j)-f(p^{j-1})f(p)}{p^{\frac{j}{2}+j\varepsilon}}\, \alpha_p^j \, p^{-ijx}\,.
\]
Then, similar to  the calculations below \eqref{eq:DomConvPre}, we find
\begin{align}\label{eq:cC-to-a-and-b}
    \Big\| \log \big|\cC_{N,\varepsilon}\big|^2 - 2\Re\sum_{N_0 < p\leq N} a_{p,\ep} \Big\|_\infty \ll \sum_{N_0 < p\leq N} \sup_{\ep \geq 0} \Big(\|a_{p,\ep}\|_\infty^2 + \|b_{p,\ep}\|_\infty + \|b_{p,\ep}\|_\infty^2\Big)\,.
\end{align}
Thus, uniform convergence of $\log |\cC_{N,\ep}|^2$ follows if we show uniform  convergence of $\sum_{p} a_{p,\varepsilon}$ as $\min(N,1/\varepsilon)\to\infty$ as well as convergence of the right-hand side above as $N\to\infty$, almost surely.

We start by showing there exists a deterministic constant $C>0$ such that
\begin{equation}\label{eq:bpe-bound}
\sum_{p}\sup_{\varepsilon\geq0} \|b_{p,\varepsilon}(x)\|_\infty \leq C\,.
\end{equation}
To show \eqref{eq:bpe-bound}, we use the triangle inequality and $|\alpha_p| = 1$ to write 
\begin{align*}
    \sum_{p}\sup_{\varepsilon\geq0}\|b_{p,\varepsilon}\|_{\infty}&\leq\sum_{p}\sum_{j\geq3}\frac{|f(p^j)| + |f(p^{j-1})f(p)|}{p^{j/2}}\,.
\end{align*}
By~\eqref{eq:TwistCondition} we have $\sum_p\sum_{j\geq3}\vert f(p^j)\vert/p^{j/2} <\infty$. Thus,
\begin{equation}\label{eq:bpepSetUp}
     \sum_{p}\sup_{\varepsilon\geq0}\|b_{p,\varepsilon}\|_\infty\ll\sum_{p}\sum_{j\geq3}\frac{|f(p^{j-1})f(p)|}{p^{j/2}}= \sum_{p}\frac{\vert f(p^2)f(p)\vert}{p^{3/2}}+\sum_{p}\frac{|f(p)|}{\sqrt{p}}\sum_{j\geq3}\frac{\vert f(p^j)\vert}{p^{j/2}}\,.
\end{equation}
Since we have $|f(p)|/\sqrt{p}\ll 1$, our twist condition~\eqref{eq:TwistCondition} implies that
\begin{equation}\label{eq:bpepEasy}
    \sum_{p}\frac{|f(p)|}{\sqrt{p}}\sum_{j\geq3}\frac{\vert f(p^j)\vert}{p^{j/2}} \ll \sum_{p}\sum_{j\ge 3} \frac{|f(p^j)|}{p^{j/2}} <\infty\,.
\end{equation}
For the other term in \eqref{eq:bpepSetUp}, we divide into scales and then use Cauchy-Schwarz:
\begin{align*}
    \sum_{p}\frac{\vert f(p^2)f(p)\vert}{p^{3/2}}
    \ \leq \  \sum_{n\in \N} \bigg(\sum_{p\in I_n} \frac{|f(p^2)|^2}{p^2}\bigg)^{1/2} \ \bigg(\sum_{p\in I_n}\frac{|f(p)|^2}{p}\bigg)^{1/2}\,.
\end{align*}
The second term in the summand is bounded as  $\ll 1/\sqrt{n}$ by \eqref{eq:Vn-C/n-bound}. Finiteness of $\sum_{p} |f(p^2)f(p)| / p^{3/2}$ then follows from the  twist condition \eqref{eq:TwistCondition}. Substituting this and~\eqref{eq:bpepEasy} into \eqref{eq:bpepSetUp} yields \eqref{eq:bpe-bound}.

We now turn to analyzing  the sum over $a_{p,\ep}$ via \cref{cor:series-summability-dos}, taking 
$
    c_p \coloneqq (f(p^2)-f(p)^2)/p
$ and $j\coloneqq2$ 
and noting  $\alpha_p^2$ is still a Steinhaus random variable.
Towards verifying \eqref{eq:block-summability-condition-dos}, we compute 
\begin{align*}
    \sum_{n\in \N} \bigg(\sum_{p\in I_n} \Big| \frac{f(p^2)- f(p)^2}{p} \Big|^2\bigg)^{1/2}
    \ll \sum_{n\in \N} \bigg(\sum_{p\in I_n} \frac{|f(p^2)|^2}{p^2}\bigg)^{1/2}+\sum_{n\in \N} \bigg(\sum_{p\in I_n}\frac{|f(p)|^4}{p^2}\bigg)^{1/2}\,.
\end{align*}
The first term is finite by \eqref{eq:TwistCondition}. For the second term, \eqref{def:N0} implies $|f(p)|^4/p^2 \ll |f(p)|^3/p^{3/2}$, so 
\begin{align}\label{eq:thisguyisusefullater}
    \sum_{n\in\N} \bigg(\sum_{p\in I_n}\frac{|f(p)|^4}{p^2}\bigg)^{1/2} \ll \sum_{n\in\N}\bigg(\sum_{p\in I_n}\frac{|f(p)|^3}{p^{3/2}}\bigg)^{1/2} \ll \sum_{n\in\N} \bigg(\sum_{p\in I_n}\frac{|f(p)|^3}{p^{3/2}}\bigg)^{1/3}<\infty\,,
\end{align}
where the conclusion again follows from \eqref{eq:TwistCondition}. Therefore, \eqref{eq:block-summability-condition-dos} is satisfied, and so \cref{cor:series-summability-dos} yields uniform almost-sure convergence of $\sum_p a_{p,\ep}$. Also, \eqref{eq:block-summability-condition-dos} yields 
$\sum_p \sup_{\ep \geq 0} \|a_{p,\ep}\|_\infty^2 <\infty$. Together with \eqref{eq:cC-to-a-and-b} and \eqref{eq:bpe-bound}, we have uniform almost-sure convergence of $\log |\cC_{N,\ep}|^2$. \cref{eq:CNExpo} follows from \eqref{eq:cC-to-a-and-b} and \eqref{eq:summability-dos-expmoment} for $\sum_p a_{p,\ep}$.

It remains to show the same results for $\Re S_{N,\ep}^{j\geq 2}$. We write 
\[
    S_{N,\ep}^{j\geq 2} = S_{N,\ep}^{j=2} + S_{N,\ep}^{j\geq 3}\,, \quad S_{N,\ep}^{j=2} (x) \coloneqq \sum_{N_0< p\leq N}  \frac{f(p)^2}{2 p^{1+2\ep}} \alpha_p^2 p^{-2ix}\,, \quad S_{N,\ep}^{j\geq 3} (x) \coloneqq \sum_{N_0 <p\leq N} \sum_{j\geq 3} \frac{f(p)^j}{j p^{\frac{j}{2}+j\ep}} \alpha_p^j p^{-ijx}\,.
\]
For the $j\geq 3$ term, we have the deterministic bound using \eqref{def:N0}
\begin{align*}
\sum_{p}\sup_{\varepsilon\ge 0}\bigg\| \sum_{j\ge 3} \frac{\alpha_p^jf(p)^j}{jp^{\frac{j}{2}+j\varepsilon}}p^{-ijx}\bigg\|_\infty &\le \frac{1}{3}\sum_{p} \sum_{j\ge 3} \frac{|f(p)|^j}{p^{j/2}}<\infty\,
\end{align*}
which gives uniform convergence of $S_{N,\varepsilon}^{j\ge 3}$ and finite exponential moments.
For $S_{N,\ep}^{j=2}$, we apply \cref{cor:series-summability-dos} with $c_p = f(p)^2/2p$ and $j=2$. Indeed, 
\eqref{eq:block-summability-condition-dos} is satisfied due to \eqref{eq:thisguyisusefullater}.
\end{proof}

\appendix

\section{Universality of GMC: proof of \texorpdfstring{\cref{prop:gmc}}{Proposition 2.4}}
\label{sec:app}
Here we prove \cref{prop:gmc}. 
In the language of GMC theory, \cref{prop:gmc} says that the GMC measure $\mu_{\infty}^{\text{GMC}}$ associated to the log-correlated Gaussian field $2\Re G_{\infty}$, where
\[
    G_{\infty}(x) \coloneq \sum_p \frac{f(p)}{\sqrt{p}} Z_p \, p^{-ix}
\]
is universal with respect to the approximations $(2 \Re G_{N,\ep})_{N, \ep \geq 0}$, defined in \eqref{eq:Gau},  as $\min(N,1/\ep) \to \infty$.
Below, we fix a compact interval $\cI \subset \R$ and write $\cM(\cI)$ to denote the space of Radon measures on $\cI$ equipped with the topology of weak convergence. 

For $\theta \in (0,1]$, convergence in probability of $(\mu_{N,\ep}^{\text{GMC}})_{N,\ep}$ will follow from verifying the hypotheses of Theorem~4.4 in \cite{junnila-saksman}, which we do presently.\footnote{\cite[Proposition~1]{atherfold2025fourier} gives the analogous result for the Gaussian Fourier series, and its proof is similar.}
First, we construct the limiting measure $\mu_{\infty}^{\text{GMC}}$ as the in-probability limit of $\mu_{N,0}^{\text{GMC}}$ as $N\to\infty$.

\begin{lemma}\label{lem:gmc-critical-line-convergence}
For $\theta \in (0,1]$, the family of measures $(\mu^{\text{GMC}}_{N,0})_{N \geq 0}$ converges in probability in $\cM(\cI)$ as $N\to\infty$  to a non-trivial, non-atomic random measure denoted by $\mu_{\infty}^{\text{GMC}}$.
\begin{proof}
For  $\theta=1$,  this was shown in  \cite[Section 2.3.2]{goro-wong-3}, and their proof works for all $|f|^2\in \mathbf{P}_\theta$. 
For $\theta \in (0,1)$ and $|f|^2\in \mathbf{P}_\theta$, the  computations in \cite[Section 2.3.1]{goro-wong-3} still apply, and verify the hypotheses of \cite[Theorem~6.1]{SW20}, which in fact yields almost-sure convergence of $(\mu^{\text{GMC}}_{N,0})_{N\geq 0}$.
\end{proof}
\end{lemma}

Second, we show universality of $\mu_{\infty}^{\text{GMC}}$ as a limit \emph{in distribution} (not yet in probability).
\begin{lemma}\label{lem:gmc-universality-in-distribution}
    For $\theta \in (0,1]$, the family of measures $(\mu^{\text{GMC}}_{N,\varepsilon})_{N,\ep \geq 0}$
    converges in distribution in $\cM(\mathcal{I})$ to $\mu_{\infty}^{\text{GMC}}$ as $\min(N,1/\ep) \to \infty$.
\begin{proof}
Let $C_{N,\ep}(x,y)$ denote the covariance function of $2\Re G_{N,\ep}$.
According to \cite[Theorem~1.1]{junnila-saksman}, and given \cref{lem:gmc-critical-line-convergence}, it suffices to show the following: there exists a sequence $R=R(\varepsilon, N)\in\N$, such that $R(\varepsilon,N)\to\infty$ as $\min(N,1/\ep)\to\infty$, and $C_{N,\ep}$ satisfies
\begin{equation}\label{eq:criteria1}
        \sup_{N,\ep}\sup_{x,y}|C_{N,\ep}(x,y)-C_{R,0}(x,y)|\leq C<\infty 
\end{equation}
as well as
    \begin{equation}\label{eq:criteria2}
        \lim_{\min(N,1/\ep)\to\infty}\sup_{|x-y|\geq\delta}|C_{N,\ep}(x,y)-C_{R,0}(x,y)|=0 \,,\quad\text{for all $\delta>0$.}
    \end{equation}
We show \eqref{eq:criteria1} and \eqref{eq:criteria2} for $R \coloneqq \min(\lfloor \exp(1/\ep) \rfloor , N)$. 
The computation is similar to the proofs of \cite[Lemma~2.6]{goro-wong-2} and \cite[Lemma~1]{atherfold2025fourier}. All convergence statements are as $\min(N,1/\varepsilon)\to\infty$. 

Using Stieltjes integration by parts, we have 
\begin{align}
    \big|C_{N,\varepsilon}&(x,y)-C_{R,0}(x,y)\big|=2\Big|\sum_{p}\frac{|f(p)|^2}{p}\big(p^{-2\varepsilon}\mathbbm{1}_{p\leq N}-\mathbbm{1}_{p\leq R}\big)\cos(|x-y|\log p)\Big|\notag\\
    &=2\Big|\int_{2}^{N} p^{-1}\big(p^{-2\varepsilon }\mathbbm{1}_{p\leq N}-\mathbbm{1}_{p\leq R}\big)\cos(|x-y|\log p)\d(\theta\li(p)+\cE_{|f|^2}(p))\Big|\notag\\
    &\ll\Big|\int_{2}^{N}\theta\big(p^{-2\varepsilon }\mathbbm{1}_{p\leq N}-\mathbbm{1}_{p\leq R}\big)\cos(|x-y|\log p)\tfrac{\d p}{p\log p}\Big| \label{eq:line2}\\
    &\qquad+\Big|\,\Big[p^{-1}(1-p^{-2\varepsilon})\cE_{|f|^2}(p)\cos(|x-y|\log p)\Big]^{R}_{2}\,\Big|\label{eq:line3}\\
    &\qquad+\int_{2}^{R}\frac{\cE_{|f|^2}(p)}{p^2}\Big(\big|(1+2\varepsilon)p^{-2\varepsilon}-1\big|+(1-p^{-2\varepsilon})|x-y|\Big)\d p\label{eq:line4}\\
    &\qquad + \Big|\Big[p^{-1} p^{-2\varepsilon}\cE_{|f|^2}(p)\cos(|x-y|\log p) \Big]_{R}^N \Big|+\int_R^N \frac{\mathcal{E}_{|f|^2}(p)}{p^2}\big(p^{-2\varepsilon}(1+2\varepsilon)+p^{-2\varepsilon}|x-y|\big)\;\mathrm{d}p\,. \label{eq:line5}
\end{align}
We start by bounding \eqref{eq:line3}. Using that $\mathcal{E}_{|f|^2}(x) = o(x/\log x)$ and $|\cos(|x-y|\log p)|\le 1$ we get
\begin{equation*}
    \Big|\,\Big[p^{-1}(1-p^{-2\varepsilon})\cE_{|f|^2}(p)\Big]^{R}_{2} \,\Big|\ll o\Big(\frac{1}{\log R}\Big)+C\cdot (1-2^{-2\varepsilon}) \to 0\,.
\end{equation*}
For \eqref{eq:line4}, let $F(\ep;p)$ be the integrand in \eqref{eq:line4}. 
Using 
$
\max\{1-p^{-2\varepsilon},|(1-2\varepsilon)p^{-2\varepsilon}-1|\}\ll 2\varepsilon \log p
$,
\begin{align}\label{eq:FeppSplit}
    \int_{2}^{R}F(\varepsilon;p)\d p\ll\int_{2}^{\log R}\frac{|\cE_{|f|^2}(p)|}{p^2}(2\varepsilon\log p)(1+|x-y|)~\d p+\int_{\log R}^{\infty}\frac{|\cE_{|f|^2}(p)|}{p^2}(1+|x-y|)~\d p.
\end{align}
Due to condition \eqref{eq:remainder}, the second integral converges to 0 uniformly in  $x,y\in\mathcal{I}$. For the first integral, using $\cE_{|f|^2}(p)=o(p/\log p)$ and $\varepsilon\ll 1/\log R$, we see that it is uniformly bounded by $O( \log\log R/\log R)$, which converges to zero. Thus, \eqref{eq:line3} and \eqref{eq:line4} satisfy  \eqref{eq:criteria1} and \eqref{eq:criteria2}. 

For \eqref{eq:line5}, the first term is uniformly bounded by $O(R^{-1}|\cE_{|f|^2}(R)|+N^{-1}|\cE_{|f|^2}(N)|)\ll o(1/\log R)$, which converges to zero. Up to a universal constant, the second term is smaller than the second summand on the right hand side of \eqref{eq:FeppSplit} and thus also converges to zero.

Lastly, for \eqref{eq:line2}, we first compute the integral over $p\in[2,R)$ by changing variables $s=\log p$:
\begin{align}\label{eq:boundA3}
    \Big|\int_{2}^{R}\theta&\big(p^{-2\ep}\mathbbm{1}_{p\leq N}-\mathbbm{1}_{p\leq R}\big)\cos(|x-y|\log p)\tfrac{\d p}{p\log p}\Big|\leq\Big| \int_{\log2}^{\log R}\frac{e^{-2\ep s}-1}{s}\cos(|x-y|s)\d s\Big|\,.
\end{align}
For fixed $|x-y|>0$,  we use Taylor's theorem $e^{-2\ep s}-1=-2\ep s+O(\ep^2 s^2)$ and integration by parts to deduce that the right-hand side above is of size 
\begin{equation*}
    O(\ep)\cdot\Big|\int_{\log2}^{\log R}\cos(|x-y|s)\d s\Big|+O(\ep^2\log R)+O(\ep^2)\cdot\Big|\int_{\log2}^{\log R}\sin(|x-y|s)\d s\Big|=O(1/\log R),
\end{equation*}
thereby giving \eqref{eq:criteria2}. Moreover, it follows from the elementary bound $|e^{-2\varepsilon s}-1|\leq 2\varepsilon s$ that the right-hand side of \eqref{eq:boundA3} is  of size at most $4\varepsilon\log R=O(1)$. Thus, the integral up to $p=R$ satisfies \eqref{eq:criteria1} and \eqref{eq:criteria2}. We then consider the integral over $p\in[R,N]$ and notice that 
\begin{align*}
    \int_{R}^{N}\theta&\big(p^{-2\ep}\mathbbm{1}_{p\leq N}-\mathbbm{1}_{p\leq R}\big)\cos(|x-y|\log p)\tfrac{\d p}{p\log p}=\theta\int_{R}^{N}p^{-2\varepsilon}\cos(|x-y|\log p)\tfrac{\d p}{p\log p}.
\end{align*}
In fact, uniformly in $x,y\in\cI$, it is of size $O(\int_{R}^{N}p^{-1-2\ep}\frac{\d p}{\log p})\ll\frac{\ep}{\log R}(R^{-2\ep}+N^{-2\ep})\ll1/\log^2 R$, which converges to zero.
This verifies \eqref{eq:criteria1} and \eqref{eq:criteria2}.
\end{proof}
\end{lemma}

The final input  we need to apply \cite[Theorem~4.4]{junnila-saksman} is  that the fields $(2\Re G_{N,\ep})_{N,\ep\geq 0}$  can be obtained from $(2\Re G_{R,0})_{R\geq 0}$ via a \emph{linear regularization process} (LRP), defined in \cite[Definition~4.3]{junnila-saksman}\footnote{Definition~4.3 and Theorem~4.4 in \cite{junnila-saksman} are stated for a linear regularization indexed by one parameter $N$, whereas we have two parameters, $N$ and $\ep$. Their definition and result  address our setting by considering subsequences $(N_k,\ep_k)_{k\in\N}$ such that $\min(N_k, 1/\ep_k)\to \infty$ as $k\to\infty$.} and recalled below using their notation.

Let $C^{\gamma}(\mathcal{I})$ denote the set of $\gamma$-H\"older continuous functions. A family of operators $\cR_{N,\ep}:\cup_{0<\gamma<1}C^{\gamma}(\mathcal{I})\to C(\mathcal{I})$ is called an LRP for an approximating sequence $(X_R)_{R\in\N} \subset \cup_{0<\gamma<1}C^{\gamma}(\mathcal{I})$
if
\begin{enumerate}
    \item For any $h\in\cup_{0<\gamma<1}C^{\gamma}(\mathcal{I})$, we have $\lim_{\min(N,1/\ep)\to\infty}\|\cR_{N,\ep} h-h\|_\infty=0$.
    \item The limit $\cR_{N,\ep} X \coloneqq \lim_{R\to\infty} \cR_{N,\ep} X_R$ exists in $C(\cI)$ almost surely.
\end{enumerate}

\begin{proof}[Proof of Proposition~\ref{prop:gmc}]
We first show convergence in probability of $\mu_{N,\ep}^{\text{GMC}}$ to the measure $\mu_{\infty}^{\text{GMC}}$ constructed in \cref{lem:gmc-critical-line-convergence} for all $\theta \in (0,1]$.
Given \cref{lem:gmc-critical-line-convergence,lem:gmc-universality-in-distribution}, this follows from \cite[Theorem~4.4]{junnila-saksman} once we show  $2\Re G_{N,\ep}$ is obtained from $2\Re G_{N,0}$ via an LRP.

Towards this, we take our approximating sequence $(X_R)_{R\in \N}$ to be
\begin{align*}
    X_R(x) &\coloneqq 2 \Re G_{R,0}(x)  =  \sum_{p\leq R} \bigg(\frac{f(p)}{\sqrt{p}}Z_p e^{-ix \log p} + \overline{\frac{f(p)}{\sqrt{p}}Z_p } e^{ix \log p}  \bigg)\,.
\end{align*}
We define the operators $\cR_{N,\ep}$ by their action on trigonometric polynomials: for all $\xi \in \R$, 
\begin{align*}
   \cR_{N,\ep} \cos(\xi x) = \ind{|\xi| \leq \log N} e^{-\ep |\xi|} \cos(\xi x) \,, \qquad \cR_{N,\ep} \sin(\xi x) = \ind{|\xi| \leq \log N} e^{-\ep |\xi|} \sin(\xi x)\,.
\end{align*}
Defining $\cR_{N,\ep}$ on complex-valued functions $f$ via the complexification $\cR_{N,\ep} f = \cR_{N,\ep} [\Re f] + i \cR_{N,\ep} [\Im f]$, we see that $\cR_{N,\ep}$ is a Fourier multiplier:
$\cR_{N,\ep} e^{i\xi x} =  \ind{|\xi| \leq \log N} e^{-\ep |\xi|} e^{i\xi x}$. In particular, we have 
\begin{align}\label{eq:lin-reg-good}
    \cR_{N,\ep} X_R = 2 \Re G_{N \wedge R, \ep} \,,
\end{align}
where $a\wedge b \coloneqq \min(a,b)$.
From this, the operators $\cR_{N,\ep}$ can be trivially extended to all of $\mathscr{C} \coloneqq \cup_{0<\gamma <1} C^{\gamma}(\cI)$. To be precise,  since the space
\[
    \mathscr{E} \coloneqq \mathrm{span}_{\R}\{1,\cos(\xi x), \sin(\xi x) : \xi >0 \} \subset \mathscr{C}
\]
and $\mathscr{C}$ are both vector spaces and   $\cR_{N,\ep}$ has been defined on a Hamel basis of $\mathscr{E}$, one can then extend this Hamel basis to a Hamel basis of $\mathscr{C}$. Writing $\mathscr{C} = \mathscr{E} \oplus \widetilde{\mathscr{E}}$, we   define $\cR_{N,\ep} \tilde{f} = \tilde{f}$ for all $\tilde{f}\in \widetilde{\mathscr{E}}$. This completes the definition of $\cR_{N,\ep}$ as an operator from $\mathscr{C}$ to $C(\cI)$.

To verify that $(\cR_{N,\ep})_{N,\ep}$ is an LRP\footnote{Though not mentioned explicitly in \cite[Definition~4.3]{junnila-saksman}, the proof of \cite[Theorem~4.4]{junnila-saksman}  uses that $\cR_{N,\ep}$ are continuous operators from $C^{\gamma}(\cI)\to C(\cI)$ in its application of the Banach--Steinhaus theorem.  Our $\cR_{N,\ep}$ are not necessarily bounded on all of $C^{\gamma}(\cI)$.
However, the Banach--Steinhaus theorem is only used to show that, for each fixed $R>0$, $\sup_{N,\ep \geq 0} \|\cR_{N,\ep} X_R\|_{\infty} <\infty$ (see the second-to-last display in \cite[Page~13]{junnila-saksman}). For us, this is immediate from \eqref{eq:lin-reg-good}.},  fix $h=f+g\in\mathscr{C}$, where $f\in \cE$ and $g \in \widetilde{\cE}$. 
Suppose we can write, for some $k\in \N$ and  $a_i, b_i, \xi_i, c \in \R$,
\[
    f(x) =c+\sum_{i=1}^k(a_i \cos(\xi_i x)+b_i\sin(\xi_i x))\,.
\]
Direct calculation implies
\begin{equation*}
    \Vert\cR_{N,\ep} h-h\Vert_\infty = \Vert\cR_{N,\ep} f-f\Vert_\infty
    \leq \sum_{i=1}^k (|a_i|+|b_i|)\cdot|e^{-\ep|\xi_i|}-1| \ind{|\xi_i| \leq \log N} +\sum_{i=1}^k(|a_i|+|b_i|) \ind{|\xi_i| > \log N}
\end{equation*}
Since $k$ is fixed, the right-hand side converges to 0 as $\min(N,1/\ep)\to\infty$. This shows  (1) in the above definition of LRPs. For (2), we see from \eqref{eq:lin-reg-good} that $\cR_{N,\ep} X_R-2 \Re G_{N, \ep}=0$ for all $R\geq N$.

Kahane's convexity inequality and the $L^r$ bounds in~\cite[Sec.~5.3]{10.1214/09-AOP490} imply $L^r$ convergence. 
\end{proof}

\printbibliography

@article{BerestyckiSimplePath,
author = {Nathana{\"e}l Berestycki},
title = {{An elementary approach to Gaussian multiplicative chaos}},
volume = {22},
journal = {Electronic Communications in Probability},
%number = {none},
publisher = {Institute of Mathematical Statistics and Bernoulli Society},
pages = {1 -- 12},
keywords = {Gaussian free field, Gaussian multiplicative chaos, Liouville quantum gravity, Thick points},
year = {2017},
doi = {10.1214/17-ECP58},
%URL = {https://doi.org/10.1214/17-ECP58}
}

@misc{berestycki-powell,
      title={Gaussian free field and Liouville quantum gravity}, 
      author={Nathanaël Berestycki and Ellen Powell},
      year={2024},
      eprint={2404.16642},
      archivePrefix={arXiv},
      primaryClass={math.PR}
}

@article{RV14,
author = {R{\'e}mi Rhodes and Vincent Vargas},
title = {{Gaussian multiplicative chaos and applications: A review}},
volume = {11},
journal = {Probability Surveys},
%number = {none},
publisher = {Institute of Mathematical Statistics and Bernoulli Society},
pages = {315 -- 392},
keywords = {Gaussian multiplicative chaos, Gaussian process, KPZ, multifractal measures, review},
year = {2014},
doi = {10.1214/13-PS218},
%URL = {https://doi.org/10.1214/13-PS218}
}

@article{kahanegmc,
    author = {Kahane, Jean-Pierrre},
    title = {Sur le chaos multiplicatif},
    journal = {Ann. Sci. Math. Qu\'{e}bec},
    volume = {9},
    number = {2},
    pages = {105-150},
    year = {1985}
}

@article{PowellCrit,
  title={Critical Gaussian multiplicative chaos: a review},
  author={Powell, Ellen},
  journal={Markov Processes and Related Fields},
  volume={27},
  number={4},
  year={2021},
  publisher={Polymat}
}

@article{shamov2016gaussian,
  title={On Gaussian multiplicative chaos},
  author={Shamov, Alexander},
  journal={Journal of Functional Analysis},
  volume={270},
  number={9},
  pages={3224--3261},
  year={2016},
  publisher={Elsevier},
  doi = {10.1016/j.jfa.2016.03.001},
}

@book{KahaneBook,
  author    = {Jean-Pierre Kahane},
  title     = {Some Random Series of Functions},
  series    = {Cambridge Studies in Advanced Mathematics},
  volume    = {5},
  publisher = {Cambridge University Press},
  edition   = {Second},
  year      = {1985}
}

@book{LedouxTalagrand,
  author    = {Michel Ledoux and Michel Talagrand},
  title     = {Probability in {B}anach Spaces: Isoperimetry and Processes},
  series    = {Ergebnisse der Mathematik und ihrer Grenzgebiete},
  volume    = {23},
  publisher = {Springer},
  year      = {1991}
}

@book{kahane-khintchine-source,
  author    = {Tuomas Hyt{\"o}nen and Jan van Neerven and Mark Veraar and Lutz Weis},
  title     = {Analysis in Banach Spaces. Volume II: Probabilistic Methods and Operator Theory},
  series    = {Ergebnisse der Mathematik und ihrer Grenzgebiete. 3. Folge},
  volume    = {67},
  publisher = {Springer},
  year      = {2017},
  doi       = {10.1007/978-3-319-69808-3}
}

@article{vallender,
author = {Vallender, S. S.},
title = {Calculation of the Wasserstein Distance Between Probability Distributions on the Line},
journal = {Theory of Probability \& Its Applications},
volume = {18},
number = {4},
pages = {784-786},
year = {1974},
doi = {10.1137/1118101},

URL = { 
    
        https://doi.org/10.1137/1118101
    
    

},
eprint = { 
    
        https://doi.org/10.1137/1118101
    
    

}
}

@book{bhattacharya-rao,
author = {Bhattacharya, Rabi N. and Rao, R. Ranga},
title = {Normal Approximation and Asymptotic Expansions},
publisher = {Society for Industrial and Applied Mathematics},
year = {2010},
doi = {10.1137/1.9780898719895},
address = {},
edition   = {},
URL = {https://epubs.siam.org/doi/abs/10.1137/1.9780898719895},
eprint = {https://epubs.siam.org/doi/pdf/10.1137/1.9780898719895}
}

@article{rotar,
author = {Rotar’, V. I.},
title = {A Non-Uniform Estimate for the Convergence Speed in the Multi-Dimensional Central Theorem},
journal = {Theory of Probability \& Its Applications},
volume = {15},
number = {4},
pages = {630-648},
year = {1970},
doi = {10.1137/1115072},

URL = { 
    
        https://doi.org/10.1137/1115072
    
    

},
eprint = { 
    
        https://doi.org/10.1137/1115072
    
    

}
}

@article{SW20,
author = {Eero Saksman and Christian Webb},
title = {{The Riemann zeta function and Gaussian multiplicative chaos: Statistics on the critical line}},
volume = {48},
journal = {The Annals of Probability},
number = {6},
publisher = {Institute of Mathematical Statistics},
pages = {2680 -- 2754},
year = {2020},
doi = {10.1214/20-AOP1433}
}

@article{jego21,
  author = {Jego, Antoine},
  da = {2021/06/01},
  doi = {10.1007/s00440-021-01051-7},
  id = {Jego2021},
  journal = {Probability Theory and Related Fields},
  number = {1},
  pages = {495--552},
  title = {Critical Brownian multiplicative chaos},
  ty = {JOUR},
  volume = {180},
  year = {2021}
}

@misc{KK24,
      title={Absolute continuity of non-{G}aussian and {G}aussian multiplicative chaos measures}, 
      author={Yujin H. Kim and Xaver Kriechbaum},
      year={2024},
      eprint={2410.19979},
      archivePrefix={arXiv},
      primaryClass={math.PR}
}

@misc{lacoin22,
      title={{Critical Gaussian Multiplicative Chaos revisited}}, 
      author={Hubert Lacoin},
      year={2022},
      eprint={2209.06683},
      archivePrefix={arXiv},
      primaryClass={math.PR} 
}

@misc{goro-wong-1,
      title={Martingale central limit theorem for random multiplicative functions}, 
      author={Ofir Gorodetsky and Mo Dick Wong},
      year={2024},
      eprint={2405.20311},
      archivePrefix={arXiv},
      primaryClass={math.NT},
      url={https://arxiv.org/abs/2405.20311}
}

@misc{goro-wong-2,
      title={Multiplicative chaos measure for multiplicative functions: the $L^1$-regime}, 
      author={Ofir Gorodetsky and Mo Dick Wong},
      year={2025},
      eprint={2503.10555},
      archivePrefix={arXiv},
      primaryClass={math.NT}
}

@misc{goro-wong-3,
      title={On the limiting distribution of sums of random multiplicative functions}, 
      author={Ofir Gorodetsky and Mo Dick Wong},
      year={2025},
      eprint={2508.12956},
      archivePrefix={arXiv},
      primaryClass={math.NT}
}

@inproceedings{harper2020moments,
  title={Moments of random multiplicative functions, I: Low moments, better than squareroot cancellation, and critical multiplicative chaos},
  author={Harper, Adam J},
  booktitle={Forum of Mathematics, Pi},
  volume={8},
  pages={e1},
  year={2020},
  organization={Cambridge University Press}
}

@article{helson2010hankel,
  title={Hankel forms},
  author={Helson, Henry},
  journal={Studia Mathematica},
  volume={198},
  pages={79--84},
  year={2010},
  publisher={Instytut Matematyczny Polskiej Akademii Nauk}
}

@article{atherfold2025fourier,
  title={The Fourier coefficients of the critical holomorphic multiplicative chaos},
  author={Atherfold, Christopher and Najnudel, Joseph},
  journal={arXiv preprint arXiv:2508.13849},
  year={2025}
}

@article{junnila-saksman,
author = {Janne Junnila and Eero Saksman},
title = {{Uniqueness of critical Gaussian chaos}},
volume = {22},
journal = {Electronic Journal of Probability},
number = {none},
publisher = {Institute of Mathematical Statistics and Bernoulli Society},
pages = {1 -- 31},
year = {2017},
doi = {10.1214/17-EJP28},
URL = {https://doi.org/10.1214/17-EJP28}
}

@article{DRSV14a,
author = {Bertrand Duplantier and R{\'e}mi Rhodes and Scott Sheffield and Vincent Vargas},
title = {{Critical Gaussian multiplicative chaos: Convergence of the derivative martingale}},
volume = {42},
journal = {The Annals of Probability},
number = {5},
publisher = {Institute of Mathematical Statistics},
pages = {1769 -- 1808},
year = {2014},
doi = {10.1214/13-AOP890},
URL = {https://doi.org/10.1214/13-AOP890}
}

@article{DRSV14b,
	author = {Duplantier, Bertrand and Rhodes, R{\'e}mi and Sheffield, Scott and Vargas, Vincent},
	date = {2014},
	doi = {10.1007/s00220-014-2000-6},
	id = {Duplantier2014},
	journal = {Communications in Mathematical Physics},
	number = {1},
	pages = {283--330},
	title = {Renormalization of Critical Gaussian Multiplicative Chaos and KPZ Relation},
	url = {https://doi.org/10.1007/s00220-014-2000-6},
	volume = {330},
	year = {2014}}

@article{APS19,
author = {Juhan Aru and Ellen Powell and Avelio Sep{\'u}lveda},
title = {{Critical Liouville measure as a limit of subcritical measures}},
volume = {24},
journal = {Electronic Communications in Probability},
number = {none},
publisher = {Institute of Mathematical Statistics and Bernoulli Society},
pages = {1 -- 16},
year = {2019},
doi = {10.1214/19-ECP209},
URL = {https://doi.org/10.1214/19-ECP209}
}

@article{powell18,
author = {Ellen Powell},
title = {{Critical Gaussian chaos: convergence and uniqueness in the derivative normalisation}},
volume = {23},
journal = {Electronic Journal of Probability},
number = {none},
publisher = {Institute of Mathematical Statistics and Bernoulli Society},
pages = {1 -- 26},
year = {2018},
doi = {10.1214/18-EJP157},
URL = {https://doi.org/10.1214/18-EJP157}
}

@article{10.1214/09-AOP490,
author = {Raoul Robert and Vincent Vargas},
title = {{Gaussian multiplicative chaos revisited}},
volume = {38},
journal = {The Annals of Probability},
number = {2},
publisher = {Institute of Mathematical Statistics},
pages = {605 -- 631},
year = {2010},
doi = {10.1214/09-AOP490},
URL = {https://doi.org/10.1214/09-AOP490}
}

@article{wintner,
author = {Aurel Wintner},
title = {{Random factorizations and Riemann’s hypothesis}},
volume = {11},
journal = {Duke Mathematical Journal},
number = {2},
publisher = {Duke University Press},
pages = {267 -- 275},
year = {1944},
doi = {10.1215/S0012-7094-44-01122-1},
URL = {https://doi.org/10.1215/S0012-7094-44-01122-1}
}

@article{granville2001large,
  title={Large character sums},
  author={Granville, Andrew and Soundararajan, Kannan},
  journal={Journal of the American Mathematical Society},
  volume={14},
  number={2},
  pages={365--397},
  year={2001}
}

@article{harper2023typical,
  title={The typical size of character and zeta sums is $o(\sqrt{x})$},
  author={Harper, Adam J},
  journal={arXiv preprint arXiv:2301.04390},
  year={2023}
}

@article{gorodetsky2025short,
  title={A short proof of Helson's conjecture},
  author={Gorodetsky, Ofir and Wong, Mo Dick},
  journal={Bulletin of the London Mathematical Society},
  volume={57},
  number={4},
  pages={1065--1076},
  year={2025},
  publisher={Wiley Online Library}
}

@article{BohrJessenI,
  author  = {Bohr, Harald and Jessen, B{\"o}rge},
  title   = {{\"U}ber die Wertverteilung der {R}iemannschen Zetafunktion. Erste Mitteilung},
  journal = {Acta Mathematica},
  volume  = {54},
  pages   = {1--35},
  year    = {1930}
}

@article{BohrJessenII,
  author  = {Bohr, Harald and Jessen, B{\"o}rge},
  title   = {{\"U}ber die Wertverteilung der {R}iemannschen Zetafunktion. Zweite Mitteilung},
  journal = {Acta Mathematica},
  volume  = {58},
  pages   = {1--55},
  year    = {1932}
}

@phdthesis{bagchi81,
  author  = {Bagchi, B.},
  title   = {Statistical Behaviour and Universality Properties of the
             {R}iemann Zeta Function and Other Allied {D}irichlet Series},
  school  = {Indian Statistical Institute},
  address = {Kolkata, India},
  year    = {1981},
  url     = {https://library.isical.ac.in/jspui/handle/10263/4256},
  note    = {Available from the Indian Statistical Institute Digital Repository}
}

@inbook{harper-icm,
author = {Adam J. Harper},
title = {Better than Squareroot Cancellation in Number Theory},
booktitle = {Proceedings of the International Congress of Mathematicians 2026 - Volume 3: Invited Lectures (Sections 1–4)},
chapter = {},
year = 2026,
pages = {396-413},
publisher = {Society for Industrial and Applied Mathematics},
doi = {10.1137/25M1826366},
URL = {https://epubs.siam.org/doi/abs/10.1137/25M1826366},
eprint = {https://epubs.siam.org/doi/pdf/10.1137/25M1826366}
}

@book{montgomery-vaughan-1, place={Cambridge}, series={Cambridge Studies in Advanced Mathematics}, title={Multiplicative Number Theory I: Classical Theory}, publisher={Cambridge University Press}, author={Montgomery, Hugh L. and Vaughan, Robert C.}, year={2006}, collection={Cambridge Studies in Advanced Mathematics}}

@INCOLLECTION{sound-survey,
  title     = "The distribution of values of zeta and L-functions",
  booktitle = "International Congress of Mathematicians",
  author    = "Soundararajan, Kannan",
  publisher = "EMS Press",
  pages     = "1260--1310",
  month     =  dec,
  year      =  2023,
  language  = "en"
}

@book{rudin1987real,
  title={Real and Complex Analysis},
  author={Rudin, W.},
  isbn={9780071002769},
  lccn={86000007},
  series={Mathematics series},
  year={1987},
  publisher={McGraw-Hill}
}

@ARTICLE{fyodorov-keating,
  title     = "Freezing transitions and extreme values: random matrix theory,
               $\zeta(1/2 + it)$ and disordered landscapes",
  author    = "Fyodorov, Yan V and Keating, Jonathan P",
  journal   = "Philos. Trans. A Math. Phys. Eng. Sci.",
  publisher = "The Royal Society",
  volume    =  372,
  number    =  2007,
  pages     = "20120503",
  month     =  jan,
  year      =  2014,
  language  = "en"
}

@article{fhk,
  title = {Freezing Transition, Characteristic Polynomials of Random Matrices, and the Riemann Zeta Function},
  author = {Fyodorov, Yan V. and Hiary, Ghaith A. and Keating, Jonathan P.},
  journal = {Phys. Rev. Lett.},
  volume = {108},
  issue = {17},
  pages = {170601},
  numpages = {5},
  year = {2012},
  month = {4},
  publisher = {American Physical Society},
  doi = {10.1103/PhysRevLett.108.170601},
  url = {https://link.aps.org/doi/10.1103/PhysRevLett.108.170601}
}

@ARTICLE{paquette-zeitouni,
  title     = "The extremal landscape for the {C} $\beta $ {E} ensemble",
  author    = "Paquette, Elliot and Zeitouni, Ofer",
  journal   = "For. Math. Sigma",
  publisher = "Cambridge University Press (CUP)",
  volume    =  13,
  number    = "e1",
  year      =  2025,
  language  = "en"
}

@misc{abr1,
      title={{The Fyodorov-Hiary-Keating Conjecture. I}}, 
      author={Louis-Pierre Arguin and Paul Bourgade and Maksym Radziwiłł},
      year={2020},
      eprint={2007.00988},
      archivePrefix={arXiv},
      primaryClass={math.PR},
      url={https://arxiv.org/abs/2007.00988}, 
}

@misc{abr2,
      title={{The Fyodorov-Hiary-Keating Conjecture. II}}, 
      author={Louis-Pierre Arguin and Paul Bourgade and Maksym Radziwiłł},
      year={2023},
      eprint={2307.00982},
      archivePrefix={arXiv},
      primaryClass={math.NT},
      url={https://arxiv.org/abs/2307.00982}, 
}

@misc{kim-kriechbaum-future,
      title={Non-Gaussian multiplicative chaos from sums of indepedendent increments: criticality and universality}, 
      author={Yujin H. Kim and Xaver Kriehcbaum}, 
      note = {Preprint, forthcoming}
}

@misc{email,
      title={Private communication}, 
      author={Ofir Gorodetsky and Mo Dick Wong}
}

\end{document}